\documentclass[10pt,reqno]{amsart}
\RequirePackage[OT1]{fontenc}
\RequirePackage{amsthm,amsmath}
\RequirePackage[numbers]{natbib}
\RequirePackage[colorlinks,linkcolor=blue,citecolor=blue,urlcolor=blue]{hyperref}
\usepackage{amssymb}
\usepackage{anysize}
\usepackage{hyperref}
\usepackage{extarrows}
\usepackage{multirow}
\usepackage{natbib}
\usepackage{stmaryrd}
\usepackage{color}
\usepackage{amsmath}
\usepackage{enumitem}
\usepackage{mathtools} 
\usepackage{dsfont} 
\usepackage{comment}
\usepackage{soul}

\numberwithin{equation}{section}
\theoremstyle{plain} 
\newtheorem{theorem}{Theorem}[section]
\newtheorem{lemma}[theorem]{Lemma}

\newtheorem{proposition}[theorem]{Proposition}

\theoremstyle{definition}
\newtheorem{definition}[theorem]{Definition}

\theoremstyle{remark}

\renewcommand{\Im}{\mathrm{Im}\,}
\newcommand{\mr}{\mathfrak m}

\newcommand{\E}{{\mathbb E }}

\newcommand{\R}{{\mathbb R }}
\newcommand{\N}{{\mathbb N}}

\renewcommand{\P}{{\mathbb P}}
\newcommand{\C}{{\mathbb C}}

\newcommand{\ii}{\mathrm{i}}
\newcommand{\ee}{\mathrm{e}}
\newcommand{\deq}{\mathrel{\mathop:}=}
\newcommand{\dd}{\mathrm{d}}
\newcommand{\ie}{\emph{i.e., }}
\newcommand{\eg}{\emph{e.g., }}
\newcommand{\cf}{\emph{c.f., }}

\newcommand{\wt}{\widetilde}
\newcommand{\ud}{\underline}

\newcommand{\bs}{\boldsymbol}

\def\Tr{\mathrm{Tr}}
\def\i{\text{i}}

\def\G{\mathcal{G}}

\def\Dim{\Delta \Im\,}
\def\Ai{\mathrm{Ai}}

\def\ee{\mathrm{e}}
\def\nn{\mathfrak{n}}

\def\one{\mathds{1}}

\def\<{\langle}
\def\>{\rangle}
\def\intkappa{\int_{\kappa_1}^{\kappa_2}}
\def\X{\mathcal{X}}

\renewcommand{\mathbf}[1]{\bs{#1}}
 
\marginsize{25mm}{25mm}{25mm}{26mm}  

\allowdisplaybreaks

\begin{document}
	
	\begin{minipage}{0.85\textwidth}
		\vspace{2.5cm}
	\end{minipage}
	\begin{center}
		\large\bf Convergence rate to the Tracy--Widom laws for the largest eigenvalue\\ of sample covariance matrices
		
	\end{center}

	\renewcommand{\thefootnote}{\fnsymbol{footnote}}	
	\vspace{0.5cm}
	
	\begin{center}
		\begin{minipage}{1.4\textwidth}

			\begin{minipage}{0.33\textwidth}
				\begin{center}
					Kevin Schnelli\footnotemark[1]\\
					\footnotesize 
					{KTH Royal Institute of Technology}\\
					{\it schnelli@kth.se}
				\end{center}
			\end{minipage}
			\begin{minipage}{0.33\textwidth}
				\begin{center}
					Yuanyuan Xu\footnotemark[2]\\
					\footnotesize 
					{KTH Royal Institute of Technology}\\
					{\it yuax@kth.se}
				\end{center}
			\end{minipage}
		\end{minipage}
	\end{center}
	
	\bigskip

	\footnotetext[1]{Supported by the Swedish Research Council Grant VR-2017-05195 and the Knut and Alice Wallenberg Foundation.}
	\footnotetext[2]{Supported by the Swedish Research Council Grant VR-2017-05195.}

	\renewcommand{\thefootnote}{\fnsymbol{footnote}}	
	
	\vspace{1cm}
	
	\begin{center}
		\begin{minipage}{0.83\textwidth}\footnotesize{
				{\bf Abstract.}}
			We establish a quantitative version of the Tracy--Widom law for the largest eigenvalue of high dimensional sample covariance matrices. To be precise, we show that the fluctuations of the largest eigenvalue of a sample covariance matrix $X^*X$ converge to its Tracy--Widom limit at a rate nearly $N^{-1/3}$, where $X$ is an $M \times N$ random matrix whose entries are independent real or complex random variables, assuming that both $M$ and $N$ tend to infinity at a constant rate. This result improves the previous estimate $N^{-2/9}$ obtained by Wang~\cite{wang_rate}. Our proof relies on a Green function comparison method~\cite{rigidity} using iterative cumulant expansions, the local laws for the Green function and asymptotic properties of the correlation kernel of the white Wishart ensemble.

		\end{minipage}
	\end{center}

	\vspace{5mm}
	
	{\small
		\footnotesize{\noindent\textit{Date}: August 5, 2021}\\
				\footnotesize{\noindent\textit{MSC class 2020}: 60B20, 62H10}\\
	}
	
	\vspace{2mm}

	\thispagestyle{headings}

\section{Introduction and main results}

\subsection{Introduction and previous work}

Sample covariance matrices are fundamental objects in multivariate statistics, with applications in various fields, \eg economics, population genetics and signal processing. Given $N$ independent samples $\mathbf{y}_1, \cdots, \mathbf{y}_N$ drawn from a centered random vector distribution of dimension $M$, the sample covariance matrix $\frac{1}{N} \sum_{i=1}^N\mathbf{y}_i \mathbf{y}^{*}_i$ has been well studied in the classical setting when $M$ is fixed and $N$ tends to infinity; see \cite{classical1,classical2}. However, in view of the prevalence of high dimensional data in modern applications, the population size is often large and comparable to the sample size \cite{johnstone,johnstone_application}. In this paper, we focus on the regime where $M \equiv M(N)$ depends on $N$, and both $M$ and $N$ tend to infinity at a rate $\varrho_0  \in (0,\infty)$. To be precise, denoting the aspect ratio by
\begin{align}\label{ratio}
	\varrho \equiv \varrho_N:=M/N,
\end{align}
we assume that the limit of $\rho_N$ exists as $N$ tends to infinity and
\begin{align}\label{limit_rho}
	\lim_{N \rightarrow \infty} \varrho_N= \varrho_0 \in (0,\infty).
\end{align}

We consider sample covariance matrices of the form $X^*X$, where the data matrix $X=(X_{ij})$ is an $M \times N$ random matrix whose entries are independent real or complex valued random variables satisfying
\begin{align}\label{sample_condition_1}
\E [X_{ij}]=0, \qquad \E \big[ |\sqrt{N}X_{ij}|^2\big]=1, \qquad\qquad 1\leq i \leq M,\,~1\leq j \leq N.
\end{align}
For the complex case, we moreover assume that 
\begin{align}\label{sample_condition_2}
\E[(X_{ij})^2]=0, \qquad\qquad 1\leq i \leq M,\,~1\leq j \leq N.
\end{align} 
The eigenvalues of $X^*X$ are denoted by $(\lambda_j)_{j=1}^{N}$ in non-decreasing order, and the empirical spectral distribution of $X^*X$ is defined by $\dd \mu_N:=\frac{1}{N} \sum_{j=1}^N \delta_{\lambda_j} (\dd x)$. Marchenko and Pastur \cite{MP} proved that the empirical spectral distribution converges weakly in probability (almost surely) to the Marchenko--Pastur distribution, whose density is given by 
$$\dd\mu_{\mathrm{MP}, \varrho_0}(x):=\frac{1}{2 \pi \varrho_0} \sqrt{\frac{(x- E^-_0)( E^+_0 -x)}{x^2}} \one_{[E^-_0,E^+_0]} (x) \dd x+(1-\varrho_0^{-1})_+ \delta_0(\dd x),$$
with $E^{\pm}_0:=(1 \pm \sqrt{\varrho_0})^2$.

The largest eigenvalue $\lambda_N$ of the sample covariance matrix $X^*X$ is of particular interest in principal component analysis \cite{pca}. We refer the reader to \cite{johnstone_application,johnstone+paul,paul+aue,bai+yao} for reviews of statistical applications. It is also commonly used in classical hypothesis tests, \eg Roy's largest root test \cite{roy}, or signal detection \cite{signal1,signal2}.  The asymptotics of largest eigenvalue of sample covariance matrices were well studied in \cite{German, silverstein,bai}. In particular, $\lambda_N$ converges almost surely to the right endpoint $E^+_0$ of the Marchenko--Pastur law, if the rescaled matrix entries $(\sqrt{N}X)_{ij}$ have finite fourth order moments. 

It is then natural to consider the fluctuations of the largest eigenvalue $\lambda_N$ near $E_0^{+}$. They were first studied for the special case when the rescaled matrix entries $(\sqrt{N} X)_{ij}$ are i.i.d. real or complex-valued standard Gaussian random variables. These Gaussian sample covariance matrices are called the white Wishart ensemble, and they are directly related to the classical Laguerre ensembles of random matrix theory~\cite{metha}. The asymptotics of the fluctuations of the largest eigenvalue were studied in \cite{peter, kurt_complex} for the complex white Wishart ensemble and in \cite{johnstone} for the real white Wishart ensemble. Define the centering and scaling parameters
\begin{align}\label{old_parameter}
 \mu_N:=(\sqrt{M}+\sqrt{N})^2; \quad  \sigma_N:=(\sqrt{M}+\sqrt{N}) \Big( \frac{1}{\sqrt{M}} +\frac{1}{\sqrt{N}}\Big)^{1/3}.
\end{align}
Under the condition in (\ref{limit_rho}), the centering parameter $\mu_N$ is of order $N$ and the scaling parameter $\sigma_N$ is of order $N^{1/3}$. Then the fluctuations of the largest rescaled eigenvalue of the white Wishart ensemble converge to the celebrated Tracy--Widom laws \cite{TW1, TW2}, \ie
\begin{align}\label{tw_limit}
\frac{N \lambda_N-\mu_N}{\sigma_N} \Longrightarrow \mathrm{TW}_{\beta}, \qquad \beta=1,2,
\end{align}
where we use the parameter $\beta=1,2$ to indicate the symmetry class, \ie $\beta=1$ for real-valued sample covariance matrices and $\beta=2$ for the complex case. We remark that the centering and scaling parameters in (\ref{old_parameter}) were chosen slightly differently in \cite{johnstone} for $\beta=1$, with $N$ replaced by $N-1$. This is due to the asymptotic properties of the associated Laguerre polynomials but will not effect the convergence in (\ref{tw_limit}).

The convergence in (\ref{tw_limit}) is not restricted to the white Wishart ensemble, and the limiting laws are universal for general sample covariance matrices. This universal phenomenon of the extreme eigenvalues is referred to as edge universality, which has been established for various classes of random matrices, \eg Wigner matrices \cite{rigidity,LY,So1,tao_vu2}, generalized or sparse Wigner matrices~\cite{AEKS,BEY edge universality, sparse0}. The universality for the largest eigenvalue statistics of sample covariance matrices was first considered in \cite{sasha} for $\varrho_N=1-O(N^{-2/3})$ assuming that the matrix entries $(\sqrt{N}X)_{ij}$ have symmetric distributions and sub-Gaussian tails. The condition on the aspect ratio $\varrho_N$ was removed in \cite{peche}, see also \cite{sodin} for the corresponding results for the smallest eigenvalue when the limiting aspect ratio satisfies $\varrho_0 \neq 1$. Edge universality for sample covariance matrices when $(\sqrt{N}X)_{ij}$ have vanishing third moments was proved in~\cite{wang_ke} under the condition $\varrho_0 \neq 1$. Edge universality without moment matching for sample covariance matrices was proved in~\cite{PY1} when $\varrho_0 \neq 1$. A necessary and sufficient condition on the entries' distributions for the edge universality to hold was given in~\cite{Ding+Yang}. If $\varrho_0=1$ (including the square case $M=N$), the smallest eigenvalue exhibits a different asymptotic behavior, which is referred to as the hard edge, see \cite{Edelman,peter} for the white Wishart ensemble, and the corresponding universality was studied in \cite{Ben+Peche,tao_vu}. We will focus on the largest eigenvalue of sample covariance matrices with the limiting aspect ratio $\varrho_0$ being any positive constant. 

Quantifying the convergence to the Tracy--Widom laws in~\eqref{tw_limit} is not only of interest from a mathematical point of view, but also also of fundamental importance in statistics in order to justify the use of asymptotic results in practice. For the real white Wishart ensemble Ma obtained the following quantitative estimate.

\begin{theorem}[Theorem 1 in~\cite{Ma}, quantitative Tracy--Widom law for the white Wishart ensemble]\label{convergence_gaussian}
Let $\lambda^{(W)}_N$ denote the largest eigenvalue of a real white Wishart matrix satisfying (\ref{limit_rho}). Define
\begin{align}\label{some_parameter}
	\wt \mu_N:=(\sqrt{M_-}+\sqrt{N_-})^2; \quad \wt \sigma_N:=(\sqrt{M_-}+\sqrt{N_-}) \Big( \frac{1}{\sqrt{M_-}} +\frac{1}{\sqrt{N_-}}\Big)^{1/3},
\end{align}
where $N_-=N-\frac{1}{2}$ and $M_-=M-\frac{1}{2}$. Then, for any fixed $r_0 \in \R$, there exists a constant $C \equiv C(r_0)$ such that for any $r \geq r_0$,
\begin{equation}\label{gaussian}
\Big|\P\Big( \frac{N \lambda^{(W)}_N-\wt \mu_N}{\wt \sigma_N}<r \Big)- \mathrm{TW}_{1}(r) \Big| \leq C N^{-2/3} \ee^{-r/2}.
\end{equation}
\end{theorem}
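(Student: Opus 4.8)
\medskip
\noindent\textbf{Proof strategy.}
The plan is to work with the exact finite-$N$ law of $\lambda^{(W)}_N$ for the real Laguerre (Wishart) ensemble and to track every error quantitatively through the soft-edge scaling limit. For $\beta=1$ the distribution function $\P(N\lambda^{(W)}_N < t)$ admits a Fredholm Pfaffian representation on the half-line $(t,\infty)$ of a $2\times 2$ matrix kernel $K_N$ assembled from the Laguerre functions $\phi_k(x)=\sqrt{w(x)}\,L_k^{(\alpha)}(x)$ with $\alpha=M-N$ and $w$ the Laguerre weight, together with the nonlocal operator $\epsilon$ acting by convolution with $\tfrac12\mathrm{sgn}$. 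After the affine substitution $x\mapsto \wt\mu_N+\wt\sigma_N\,y$ the right endpoint of the bulk is normalized to $y=0$, and the limiting object is the $2\times2$ Airy matrix kernel $K_{\mathrm{Airy},1}$ whose Fredholm Pfaffian on $(r,\infty)$ equals $\mathrm{TW}_1(r)$. Thus it suffices to (i) prove a trace-norm estimate
\[
\big\|\, K_N^{\mathrm{sc}}-K_{\mathrm{Airy},1}\,\big\|_{1,\,(r,\infty)} \leq C N^{-2/3}\ee^{-r/2},
\]
and (ii) turn this into the asserted bound on the two Fredholm Pfaffians.

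For step (i) the essential input is Plancherel--Rotach asymptotics for $L_k^{(\alpha)}$ near the right turning point, uniform in the spectral variable and with an explicit error. Starting from the contour-integral representation of $L_k^{(\alpha)}$ and performing a steepest-descent analysis with an Airy parametrix near the coalescing saddles, one obtains on the edge scale an expansion
\[
\phi_N(\wt\mu_N+\wt\sigma_N y)= c_N\big(\Ai(y)+N^{-1/3} a_1(y)+O(N^{-2/3})\big),
\]
where $a_1$ is an explicit combination of $\Ai$ and $\Ai'$ and all remainders carry Airy-type (super-exponential) decay for $y$ bounded below. The role of the specific parameters $N_-=N-\tfrac12$, $M_-=M-\tfrac12$ in \eqref{some_parameter} is precisely to kill this $N^{-1/3}$ correction: a Taylor expansion of the edge map shows that the shift by $-\tfrac12$ in both $N$ and $M$ cancels $a_1$ (the Laguerre analogue of the familiar $N\mapsto N-\tfrac12$ centering improvement in the Hermite/GOE case), upgrading the kernel approximation from $O(N^{-1/3})$ to $O(N^{-2/3})$. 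One then verifies that all four entries of the matrix kernel --- including those involving one or two factors of $\epsilon$, where convolution with $\tfrac12\mathrm{sgn}$ only improves integrability and preserves the rate --- inherit this rate together with an $\ee^{-cy}$ tail, and integrates over $(r,\infty)$ to get Hilbert--Schmidt and then trace bounds with the weight $\ee^{-r/2}$.

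For step (ii) one invokes the Lipschitz stability of Fredholm Pfaffians: for trace-class $A,B$ on $L^2(r,\infty)$,
\[
\big|\,\mathrm{Pf}(J+A)-\mathrm{Pf}(J+B)\,\big| \leq \|A-B\|_1 \exp\!\big(c(1+\|A\|_1+\|B\|_1)\big),
\]
applied with $A=K_N^{\mathrm{sc}}$ and $B=K_{\mathrm{Airy},1}$. Since $\|K_{\mathrm{Airy},1}\|_{1,(r,\infty)}$ and, for $N$ large, $\|K_N^{\mathrm{sc}}\|_{1,(r,\infty)}$ are bounded uniformly in $r\geq r_0$ (indeed they decay as $r\to+\infty$), combining with step (i) yields
\[
\big|\,\P(N\lambda^{(W)}_N<\wt\mu_N+\wt\sigma_N r)-\mathrm{TW}_1(r)\,\big| \leq C N^{-2/3}\ee^{-r/2},
\]
which is the claim; the range $r\to-\infty$ needs nothing beyond $r\geq r_0$ because both sides are trapped between $0$ and their value at $r_0$, while the range $r\to+\infty$ is exactly where the exponential factor does real work and is supplied by the tail decay of the two kernels.

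The main obstacle is step (i): producing Laguerre edge asymptotics that are simultaneously uniform over the unbounded half-line $\{y\geq r_0\}$, equipped with an explicit $O(N^{-2/3})$ remainder and the correct exponential tail, and checking rigorously that $N\mapsto N_-$, $M\mapsto M_-$ annihilates the first correction term --- this cancellation is the whole reason the rate is $N^{-2/3}$ rather than $N^{-1/3}$. A secondary difficulty is the bookkeeping for the $\beta=1$ matrix kernel, particularly the entries built from the nonlocal operator $\epsilon$, where one must show that convolution with $\tfrac12\mathrm{sgn}$ respects both the rate and the trace-class integrability needed above.
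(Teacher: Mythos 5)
First, note that the paper does not prove this statement at all: it is imported verbatim as Theorem~1 of~\cite{Ma}, and the only related material the paper develops is the Laguerre asymptotics in~(\ref{laguerre_edge_ortho}) and the kernel bounds in Proposition~\ref{kernel_diff}. So the relevant comparison is with Ma's argument, whose general architecture (finite-$N$ Pfaffian formula for the LOE gap probability, edge rescaling, Plancherel--Rotach asymptotics for the Laguerre functions, stability of Fredholm Pfaffians in trace norm) your outline does capture correctly.

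There is, however, a genuine gap at the heart of step~(i). You claim that the shifts $N\mapsto N_-$, $M\mapsto M_-$ in~(\ref{some_parameter}) annihilate the $N^{-1/3}$ correction to the rescaled Laguerre functions, and hence that the scaled matrix kernel converges to the Airy matrix kernel in trace norm at rate $N^{-2/3}\ee^{-r/2}$. This is contradicted by the very asymptotics recorded in the paper (quoted from Ma, with exactly these constants): by~(\ref{laguerre_edge_ortho}) the function $\phi_{N,2}$ does converge at rate $N^{-2/3}$, but $\phi_{N,1}$ retains a genuine $O(N^{-1/3})$ correction, and consequently the edge kernel satisfies only the $O(N^{-1/3})$ bound of~(\ref{difference_real}). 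A trace-norm estimate at rate $N^{-2/3}$ for the kernel is therefore not available, and your step~(ii) (Lipschitz stability of the Fredholm Pfaffian in $\|\cdot\|_1$) would only deliver the rate $N^{-1/3}$ for the distribution function. The actual source of the $N^{-2/3}$ rate in Ma's theorem is a cancellation one level higher: the surviving $O(N^{-1/3})$ correction to the kernel has a special structure (essentially a total derivative in the integral representation~(\ref{K_N2_int}), hence a rank-one, $\Ai\otimes\Ai$-type perturbation), and the choice of $\wt\mu_N,\wt\sigma_N$ is tuned so that its first-order contribution to the Fredholm Pfaffian/determinant vanishes. That cancellation is the content of the theorem, and it is absent from your argument; on top of this, the uniform steepest-descent analysis with explicit remainders, which you correctly identify as the main technical burden, is deferred rather than carried out. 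As written, the proposal would prove a rate of $N^{-1/3}$, not $N^{-2/3}$.
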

The parallel results for complex white Wishart matrices were obtained by El Karoui \cite{complex_convergence_rate} with slightly different choices of the centering and scaling parameters. 

More recently, Wang \cite{wang_rate} extended the above quantitative Tracy--Widom law to arbitrary sample covariance matrices with an $O(N^{-2/9})$ convergence rate. To be precise, let $X^*X$ be a real sample covariance matrix such that the matrix entries $(X)_{ij}$ satisfy (\ref{sample_condition_1}), have a sub-exponential decay, then for any $\omega>0$, the largest eigenvalue $\lambda_N$ of $X^*X$ satisfies
\begin{align}\label{le rate wang}
\sup_{ r \geq r_0 } \Big|\P \Big( \frac{N \lambda_N- \mu_N}{ \sigma_N} < r \Big) - \mathrm{TW}_{1}(r)  \Big| \leq N^{-2/9+\omega},
\end{align}
for sufficiently large $N$, with $\mu_N$ and $\sigma_N$ given in (\ref{old_parameter}). This is the first explicit rate of convergence to the Tracy--Widom laws for the fluctuations of the largest eigenvalue of non-Gaussian sample covariance matrices.

\subsection{Main result and strategy}
The main result of this paper is an improved bound $N^{-1/3+\omega}$ for the convergence rate of the distribution of the scaled largest eigenvalue to its Tracy--Widom limit. 

We consider a sample covariance matrix of the form $X^*X$, with $X$ an $M \times N$ random matrix satisfying the condition in (\ref{sample_condition_1}) and also (\ref{sample_condition_2}) for the complex case. We further assume that the normalized random variables $(\sqrt{N}X)_{ij}$ have uniformly bounded moments, \ie for any $p \geq 3$, there exists $C_p>0$ independent of $N$ such that for any $1\leq i \leq M,~1\leq j \leq N$,
\begin{equation}\label{moment_condition_sample}
\E \big[ |\sqrt{N} X_{ij}|^{p} \big] \leq C_{p}.
\end{equation}
We believe that the technical condition (\ref{moment_condition_sample}) can be weakened to finite moments up to a sufficient large order using the method in \cite{Ding+Yang}.

\begin{theorem}[Quantitative Tracy--Widom law for sample covariance matrices]\label{kol_dist}
Let $\lambda_N$ be the largest eigenvalue of a sample covariance matrix $X^*X$, with $X$ a real $M \times N$ matrix satisfying the moment conditions in (\ref{sample_condition_1}) and (\ref{moment_condition_sample}), as well as the aspect ration  condition~(\ref{limit_rho}). Then for any fixed $r_0 \in \R$ and any fixed small $\omega>0$, 
\begin{equation} 
\sup_{ r \geq r_0 }\Big|\P \Big( \frac{N \lambda_N- \mu_N}{ \sigma_N} < r \Big) - \mathrm{TW}_{1}(r) \Big| \leq  N^{-\frac{1}{3}+\omega},
\end{equation}
for sufficiently large $N \geq N_0(r_0,\omega)$, with $\mu_N$ and $\sigma_N$ given in (\ref{old_parameter}). The statement holds true for complex sample covariance matrices with $\beta=2$ under the additional assumption in (\ref{sample_condition_2}).
\end{theorem}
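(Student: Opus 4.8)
The plan is to follow the Green function comparison strategy pioneered for Wigner matrices in \cite{rigidity}, but now with enough quantitative care to track the $N^{-1/3}$ rate, and combined with the known quantitative Tracy--Widom law for the white Wishart ensemble (Theorem~\ref{convergence_gaussian}). First I would fix a smooth, monotone cutoff function and express the distribution function $\P( (N\lambda_N-\mu_N)/\sigma_N < r)$ in terms of a smoothed linear statistic of the eigenvalues near the right spectral edge, namely $\E[ F(\Tr f_{E})]$ where $f_E$ is a smoothed indicator of an interval to the right of $E_0^+ + r\sigma_N/N$, built from $\Im$ of the Green function $G(z) = (X^*X - z)^{-1}$ at spectral parameter with imaginary part $\eta$ chosen of order $N^{-2/3-\varepsilon}$ (so that the edge scale $N^{-2/3}$ is resolved) up to $N^{-1+\varepsilon}$. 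This reduction requires the optimal edge rigidity estimates and local laws for the Green function of $X^*X$, which I would invoke from the literature (\cite{PY1} and references in the excerpt); the smoothing error in $r$ is controlled by the eigenvalue rigidity at the edge, and contributes at the allowed order $N^{-1/3+\omega}$.

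Second, the core is a Lindeberg-type swapping argument: interpolate between the matrix $X$ with the given entry distribution and a Gaussian matrix $X^{(\mathrm{Gauss})}$ with matching first two moments (and the vanishing pseudo-variance \eqref{sample_condition_2} in the complex case), replacing the entries one at a time. At each replacement step I would expand the change in $\E[F(\Tr f_E)]$ using an \emph{iterative cumulant expansion} (a resolvent expansion to sufficiently high order combined with cumulant expansions of the remaining entry moments), so that the leading terms -- those depending only on the first two moments -- cancel exactly between the two ensembles. What remains after the cancellation of the second-order terms is a sum of higher-cumulant contributions; using the local law bounds on $G$ (with its entries of size $O(1)$ in the bulk and the improved $\Im G \lesssim \sqrt{\kappa+\eta} + \eta/\sqrt{\kappa}$ at the edge, $\kappa$ the distance to $E_0^+$) together with the fact that the $p$-th cumulant of $\sqrt{N}X_{ij}$ is $O(N^{-p/2})$, one bounds the third-order term by $N^{-1/3+\omega}$ and shows all higher-order terms are even smaller. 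Summing over the $O(MN) = O(N^2)$ replacement steps and using that each step's contribution to the distribution function is $o(N^{-7/3})$ (after the leading cancellations) yields the total error $N^{-1/3+\omega}$.

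Third, once $\E[F(\Tr f_E)]$ for the general ensemble is shown to agree with that for the Gaussian ensemble up to $N^{-1/3+\omega}$, I would translate back: $\E[F(\Tr f_E)]$ for the white Wishart ensemble differs from $\mathrm{TW}_\beta(r)$ by at most $N^{-1/3+\omega}$, which follows from Theorem~\ref{convergence_gaussian} (for $\beta=1$; the analogue of El Karoui for $\beta=2$) together with asymptotic estimates on the correlation kernel of the Laguerre ensemble needed to convert the smoothed statistic back to the genuine distribution function with a controlled error. Here one also needs that the two natural centering/scaling choices -- $(\mu_N,\sigma_N)$ in \eqref{old_parameter} versus the shifted $(\wt\mu_N,\wt\sigma_N)$ in \eqref{some_parameter} -- differ only at order $N^{-1/3}$ after rescaling, so the $N^{-2/3}$ rate in \eqref{gaussian} survives the change of parametrization as an $N^{-1/3}$ rate in $r$. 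Combining the three steps gives the claim, with the supremum over $r \geq r_0$ handled by the exponential factor $\ee^{-r/2}$ in \eqref{gaussian} plus tail bounds on $\lambda_N$ for large $r$.

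The main obstacle I expect is the bookkeeping in the iterative cumulant expansion: one must expand to high enough order and show that \emph{after} the exact cancellation of all terms determined by the first two moments, every surviving term carries enough negative powers of $N$ -- gained from the small cumulants and from the edge decay of $\Im G$ -- to beat the $O(N^2)$ entropy of the summation and still leave a factor $N^{-1/3}$. The delicate point is that the naive power counting only gives $N^{-2/9}$ (as in \cite{wang_rate}); extracting the extra $N^{-1/9}$ requires exploiting the improved local law at the edge and the precise structure of the third-order cumulant term -- in particular a partial cancellation or an integration-by-parts gain in $\eta$ -- rather than bounding it term by term. Making this gain rigorous, uniformly in the spectral parameter down to $\eta \sim N^{-1+\varepsilon}$, is the technical heart of the argument.
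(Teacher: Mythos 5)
Your overall architecture matches the paper's: reduce the distribution function of the rescaled largest eigenvalue to a smoothed Green function observable via the lemmas from \cite{rigidity} at scale $N^{-1+\epsilon}\le\eta\le N^{-2/3-\epsilon}$, compare with the Gaussian ensemble by cumulant expansions, and finish with Ma's quantitative estimate (Theorem~\ref{convergence_gaussian}) plus the Laguerre kernel asymptotics. Two remarks on the route: the paper uses a continuous interpolating flow \eqref{dyson_flow} rather than entry-by-entry Lindeberg swapping, precisely because the flow lets the second-order cumulants cancel identically in the time derivative and, more importantly, lets one re-apply the same cumulant-expansion step recursively to arbitrary order; a per-swap bound of $o(N^{-7/3})$ as you propose is not obtained by bounding each swap separately.

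The genuine gap is in your second step, and you essentially concede it yourself. You locate the obstruction in the third-order cumulant term and hope for ``a partial cancellation or an integration-by-parts gain in $\eta$.'' That is not where the difficulty lies, and no such mechanism is supplied. In the paper the third-order terms are in fact \emph{negligible} to arbitrary order, $O_\prec(N^{-1/2})$, because every such term carries an index appearing an odd number of times in the product of Green function entries; Proposition~\ref{unmatch_lemma} exploits this parity to expand them down to any power of the control parameter. The true leading contribution comes from the \emph{fourth-order} cumulants. These are reduced (Proposition~\ref{lemma_expand_type}, then the Ward identity as in \eqref{ward_id}) to quantities of the form $\E[\Im m_N(t,z)]/(N\eta)$, and the whole rate hinges on proving $\E[\Im m_N(t,z)]\le CN^{-1/3}$ uniformly down to $\eta\sim N^{-1+\epsilon}$. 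This bound does \emph{not} follow from the local law: at $\eta=N^{-1+\epsilon}$ the fluctuation term $1/(N\eta)$ in \eqref{G_average} is of order $N^{-\epsilon}$, far larger than $N^{-1/3}$, so the pointwise estimate $\Im G\lesssim\sqrt{\kappa+\eta}+\eta/\sqrt{\kappa+\eta}$ you invoke is unavailable in expectation at this scale. The paper obtains it by a self-improving recursive comparison (Proposition~\ref{lemma_trace_sample}): the expectation for the general ensemble is compared back to the Wishart ensemble, where Lemma~\ref{lemma_trace} computes $\E^{W}[\Im m]\le CN^{-1/3}$ directly from the one-point correlation kernel, and the comparison error is itself controlled by iterating the expansion. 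Without identifying this bootstrap --- parity of indices to kill the third-order terms, plus the kernel-based expectation bound fed back through the flow --- your power counting stalls at Wang's $N^{-2/9}$, exactly as you acknowledge in your closing paragraph.
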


The edge universality can be studied using the dynamical approach of Erd\H{o}s, Schlein and Yau. The local relaxation time of Dyson's Brownian motion (DBM) at the spectral edges for Wigner matrices is known~\cite{LandonYau_edge,Bourgade extreme} to be of order $O(N^{-1/3})$. Bourgade's approach~\cite{Bourgade extreme} to study the eigenvalue dynamics under the DBM through a stochastic advection equation via interpolation with integrable ensembles~\cite{fixed generalized wigner,fixed dbm} was extended to sample covariance matrices by Wang~\cite{wang_rate}. Combining these local relaxation estimates with a quantitative Green function comparison theorem for very short times, Wang obtained the convergence rate $O(N^{-2/9})$ in~\eqref{le rate wang}, under the mild technical conditions $\varrho_0=\lim_{N \rightarrow \infty} \varrho_N \neq 1$ or $\rho_N\equiv 1$. This is due to lacking results for the eigenvalue rigidity at the hard edge except in the square case $\rho_N \equiv 1$ \cite{erdos_gram,tao_vu}. This restriction is relaxed in the present paper as our proof does not require strong rigidity estimates at the (hard) lower edge.

In view of the optimal local relaxation time estimates for the DBM of the singular values in \cite{wang_rate}, we suspect that the $O(N^{-1/3})$ rate for the convergence in (\ref{tw_limit}) is optimal for sample covariance matrices with general entries, though some classes of sample covariance matrices may exhibit convergence rates comparable to the white Wishart ensemble. Our speed of convergence estimate explicitly depends on the fourth order cumulants of the matrix entries.

Our proof is based on the Green function comparison method for the edge universality by Erd\H{o}s, Yau and Yin~\cite{rigidity}. To achieve the quantitative edge universality in Theorem \ref{kol_dist}, our main technical result given in Theorem~\ref{green_comparison} compares the expectation of a suitably chosen function of the Green function of the sample covariance matrix $X^*X$ with the corresponding quantity for the white Wishart ensemble. Compared to previous Green function comparison theorems, \eg \cite{Ding+Yang,PY1}, our comparison is on a much smaller spectral scale than the typical $O(N^{-2/3})$ edge scaling along with much finer error estimates. Instead of the traditional Lindeberg type swapping strategy~\cite{Ding+Yang,PY1,tao_vu}, we use a continuous flow interpolating between an arbitrary sample covariance matrix and the corresponding white Wishart ensemble, in combination with the local law \cite{Alex+Erdos+Knowles+Yau+Yin,isotropic,PY1} and cumulant expansions. The usefulness of cumulant expansions in random matrix theory was recognized in~\cite{KKP} and has widely been used since, \eg~\cite{BoutetdeMonvel,isotropic2,moment,He+Knowles,sparse,LP}.

 To achieve the quantitative Green function comparison theorem, we adopt our strategy developed recently in \cite{Schnelli+Xu} for Wigner matrices. In contrast to Wigner matrices, the matrix entries of a sample covariance matrix are no longer independent up to a symmetry. To handle this key difficulty, we follow \cite{isotropic,LS14b} to introduce a linearization of the sample covariance matrix and the corresponding Green function
\begin{align}\label{ttt}
H(z)=\begin{pmatrix}
-zI_N  &  X  \\
X   & -I_M
\end{pmatrix}, \qquad  G(z)=(H(z))^{-1},\qquad z \in \C^+.
\end{align}
We then compute the time derivative of the expectation of the normalized trace of the Green function under the interpolating flow. Via cumulant expansions, it then suffices to estimate the contributions to the Green function flow from the third and fourth order cumulants of the matrix entries; see (\ref{step0}). However, due to the finer spectral scale slightly bigger than $N^{-1}$ required in our Green function comparison, the local law for the Green function entries given in (\ref{G_0}) do not allow us to control these third and fourth order terms directly.

To tackle this difficulty, we introduce an expansion mechanism for averaged products of Green function entries, using the cumulant expansion formula in Lemma \ref{cumulant}. In view of the block structure of the Green function in (\ref{ttt}), the expansion mechanism is more intricate compared with Wigner matrices \cite{Schnelli+Xu}. Similar expansions were carried out in \cite{LS14b} using the Schur decomposition formula and expansions along matrix minors. Due to the finer spectral scale, we need to perform expansions to arbitrary order in terms of the control parameter of the local law in Theorem \ref{locallaw}. Cumulant expansions turn out to be more effective to conduct such expansions repeatedly to arbitrary order. We then observe that all the third order terms from the time derivative in (\ref{step0}) have unmatched indices; see Definition \ref{unmatch_def}.
These third order terms can be expanded in the unmatched indices to arbitrary order to show that they have negligible contributions compared to the fourth order terms.

 The remaining fourth order terms can be reduced to trace-like correlation functions of products of Green functions after expansions to arbitrary order. The resulting trace-like correlation functions can be recursively compared to the corresponding quantities for the white Wishart ensemble using once again the interpolating Green function flow. The desired estimates can be obtained using iterative expansions and the local law for Green function entries, together with the asymptotic properties of the correlation kernels of the white Wishart/Laguerre ensembles~\cite{convergence_kernel,johnstone,Ma} in the edge scaling.

There are many related random matrix models that have numerous applications in multivariate statistics. In recent years, Tracy--Widom limiting laws have been established for various models, for example, non-null Wishart matrices \cite{nonnull1,nonnull3,nonnull2}, double Wishart matrices (classical Jacobi Ensemble) \cite{johnstone}, sample covariance matrices with general populations \cite{bao,manova,isotropic,LS14b,wang_rate}, separable sample covariance matrices \cite{separable}, Gram type random matrices with general variance profiles \cite{gram}, Fisher matrices \cite{fisher,manova2}, sample canonical correlation matrices \cite{bao2,yang2}, and Kendall's tau~\cite{kendall tau}.  We believe the methods developed in this paper can be extended to generalized models to establish the corresponding quantitative edge universality.

{\it Organization of the paper:}  In Section~\ref{sec:preliminary}, we summarize some preliminaries that will be used for the proofs. In Section \ref{sec:main result}, we prove our main result Theorem~\ref{kol_dist} based on the quantitative Green function comparison theorem at the upper edge stated in Theorem \ref{green_comparison}. Before proving Theorem \ref{green_comparison}, we consider a simpler version of the theorem in Proposition~\ref{GCT_mn} to illustrate the main ideas of the proof. The proof of Proposition~\ref{GCT_mn} is summarized in Section~\ref{sec:toy} with the details carried out in Sections~\ref{sec:expand} and~\ref{sec:type0}. In Section~\ref{sec:Fneq1}, we then give the full proof of the Green function comparison in Theorem~\ref{green_comparison}.

{\it Notations:} 
Throughout the paper, many quantities depend on $N$ and for notational simplicity we often omit this dependence. We use~$c$ and~$C$ to denote strictly positive constants that are independent of~$N$, but their values may change from line to line. We use the standard Big-O and little-o notations for large~$N$. For $X,Y \in \R$, we write $X \ll Y$ if there exists a small $c>0$ such that $|X| \leq N^{-c} |Y|$ for large~$N$. Moreover, we write $X \sim Y$ if there exist constants $c, C>0$ such that $c |Y| \leq |X| \leq C |Y|$ for large $N$.

The following definition of stochastic domination from~\cite{Erdos+Knowles+Yau} is well-suited for high-probability estimates. 
\begin{definition}\label{definition of stochastic domination}
	Let $\mathcal{X}\equiv \mathcal{X}^{(N)}$ and $\mathcal{Y}\equiv \mathcal{Y}^{(N)}$ be two sequences of nonnegative random variables. We say that $\mathcal{Y}$ stochastically dominates~$\mathcal{X}$ if, for all (small) $\tau>0$ and (large)~$\Gamma>0$,
	\begin{align}\label{prec}
		\P\big(\mathcal{X}^{(N)}>N^{\tau} \mathcal{Y}^{(N)}\big)\le N^{-\Gamma},
	\end{align}
	for sufficiently large $N\ge N_0(\tau,\Gamma)$, and we write $\mathcal{X} \prec \mathcal{Y}$ or $\mathcal{X}=O_\prec(\mathcal{Y})$.
\end{definition}
We often use the notation $\prec$ also for deterministic quantities, then~\eqref{prec} holds with probability one. Useful properties of stochastic domination can be found in Lemma \ref{dominant_prop} below.

For any matrix $A \in \C^{m \times n}$, the matrix norm induced by the Euclidean vector norm is denoted by $\|A\|_{2}:=\sigma_{\max}(A)$, where $\sigma_{\max}(A)$ denotes the largest singular value of $A$. We denote the max norm of the matrix by $\|A\|_{\max}:=\max_{i,j}|A_{ij}|$.  Moreover, we denote the upper half-plane by $\C^+\deq\{z\in\C\,:\,\Im z>0\}$, and the non-negative numbers by $\R^+\deq\{x\in\R\,:\,x \geq 0\}$.  

Finally, we use double brackets to denote the index sets, \ie
$$\llbracket n_1, n_2 \rrbracket:=[n_1,n_2] \cap \mathbb{Z}, \qquad n_2,n_2 \in \R.$$

\section{Preliminaries}\label{sec:preliminary}
In this section, we collect some basic notations, results and tools required in the proofs of this paper.

\subsection{Local Marchenko--Pastur laws and eigenvalue rigidity}  

For a probability measure $\mu$ on $\R$, denote by $m_\mu$ its Stieltjes transform, i.e.
\begin{align}
 m_\mu(z)\deq\int_\R\frac{\dd\mu(x)}{x-z}\,,\qquad z\in\C^+\,.\nonumber
\end{align}
Note that $m_{\mu}\,:\C^+\rightarrow\C^+$ is analytic and can be analytically continued to the real line outside the support of $\mu$. Moreover, $m_{\mu}$ satisfies $\lim_{\eta\nearrow\infty}\ii\eta {m_{\mu}}(\ii \eta)=-1$. 

Consider an $N \times N$ sample covariance matrix $X^*X$, where $X$ is an $M \times N$ random matrix satisfying the moment conditions in (\ref{sample_condition_1}), (\ref{sample_condition_2}) and (\ref{moment_condition_sample}). The Stieltjes transform of the empirical spectral measure of $X^*X$ is given by
\begin{equation}\label{m_sample}
m(z) \equiv m_{N}(z):= \frac{1}{N} \Tr R(z),\qquad R(z):=(X^*X-zI)^{-1}, \qquad z \in \C^+,
\end{equation}
where $R$ is the resolvent of the matrix $X^*X$. We denote the resolvent of the accompanying $M \times M$ matrix $XX^*$ and its normalized trace by
 \begin{equation}\label{mathcalH}
 \mathcal{R}(z):=(XX^*-z)^{-1},\qquad \mr(z):= \frac{1}{M} \Tr  \mathcal{R} (z), \qquad z \in \C^+.
\end{equation}
It is straightforward to check that the eigenvalues of the $N \times N$ matrix $X^*X$ differ from the eigenvalues of the accompanying $M \times M$ matrix $XX^*$ by $|M-N|$ zeros. Hence we have
\begin{equation}\label{gamma_N}
m(z)=\varrho \mr(z)+\frac{\varrho-1}{z}.
\end{equation}
Without loss of generality, we assume $M \geq N$ and study the non-trivial eigenvalues of the sample covariance matrix of the form $X^*X$. Then from (\ref{ratio}) and (\ref{limit_rho}) we assume that
 \begin{equation}\label{condition}
 \varrho \equiv \varrho_N=M/N \geq 1, \qquad  \lim_{N \rightarrow \infty} \varrho_N= \varrho_0  \in [1,\infty).
\end{equation}

Following \cite{isotropic, LS14b}, we use the linearization of the $M \times N$ rectangular matrix $X$,
\begin{align}\label{hermitization}
H(z):=\begin{pmatrix}
-zI_N & X^*\\
X & -I_M
\end{pmatrix} \in \C^{(N+M) \times (N+M)}, \qquad z \in \C^+,
\end{align}
where $I_N \in \R^{N \times N}$ and $I_M \in \R^{M \times M}$ stand for the identity matrices. Though $H(z)$ is not self-adjoint, its inverse exists for any $z \in \C^+$, see (\ref{hermitization_inverse}) below. We denote its inverse matrix by $G \equiv G(z)$, and refer to $G$ as the Green function of the linearization matrix $H(z)$. Using the Schur decomposition/Feshbach formula, it is straightforward to check that
\begin{align}\label{hermitization_inverse}
G(z)=\begin{pmatrix}
-zI_N & X^*\\
X & -I_M
\end{pmatrix}^{-1}=
\begin{pmatrix}
R &  X^{*} \mathcal{R}  \\
X   {R}   & z \mathcal{R}
\end{pmatrix}, \qquad z \in \C^+,
\end{align}
with $R \equiv R(z)$ and $\mathcal{R} \equiv \mathcal{R}(z)$ given in (\ref{m_sample}) and (\ref{mathcalH}). In order to study the normalized trace of the resolvent of the matrix $X^*X$ in (\ref{m_sample}), it suffices to estimate the average of the first $N$ diagonal entries of the Green function $G(z)$ in (\ref{hermitization_inverse}).

Next, we recall some useful properties of the Green function $G$, see Lemma 4.6 in \cite{isotropic} for reference.
\begin{lemma}
\begin{enumerate}
	\item (Deterministic bound for Green function entries)
	If $z=E+\ii \eta \in \C^+$ satisfies $|z|<C$ for some constant $C>0$, then there exists a constant $C'>0$ such that
	\begin{equation}\label{deter_bound}
		\max_{1 \leq \mathfrak{i}, \mathfrak{j} \leq N+M} |G_{\mathfrak i \mathfrak j}(z)| \leq \|G(z)\|_2 \leq \frac{C'}{\eta}.
	\end{equation}
\item (Generalized Ward identities)
If $z=E+\ii \eta \in \C^+$ satisfies $c <|z|<C$ for some constants $c,C>0$, then there exists a constant $C'>0$ such that for any $1\leq b\leq N$ and $N+1 \leq \alpha \leq N+M$,
\begin{align}\label{ward}
     \sum_{\mathfrak{k}=1}^{N} |G_{b \mathfrak{k}}(z)|^2 = \frac{ \Im G_{bb}(z)}{\eta}; \qquad \qquad \sum_{\mathfrak{k}=N+1}^{N+M} |G_{b \mathfrak{k}}(z)|^2 \leq C' \|X^*X\|_2  \sum_{\mathfrak{k}=1}^{N} |G_{b \mathfrak{k}}(z)|^2 ;\nonumber\\
    \sum_{ \mathfrak{k}=N+1}^{N+M} |G_{\alpha \mathfrak{k}}(z)|^2 \leq \frac{C' \|X^*X\|_2 \Im G_{\alpha \alpha }(z)}{\eta}+2 ; \qquad  \sum_{\mathfrak{k}=1}^{N} |G_{\alpha \mathfrak{k}}(z)|^2 \leq C' \|X^*X\|_2 \sum_{ \mathfrak{k}=N+1}^{N+M} |G_{\alpha \mathfrak{k}}(z)|^2.
\end{align}
\end{enumerate}
	
\end{lemma}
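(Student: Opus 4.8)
The plan is to prove both parts directly from the explicit block form of $G(z)$ in~\eqref{hermitization_inverse} together with the spectral decomposition of the Hermitian matrices $X^*X$ and $XX^*$. Throughout we write $z=E+\ii\eta$ with $\eta>0$. The key structural input is that $G$ is built out of $R(z)=(X^*X-z)^{-1}$, $\mathcal R(z)=(XX^*-z)^{-1}$, and the rectangular blocks $X^*\mathcal R$, $XR$, all of which can be diagonalized simultaneously using the singular value decomposition $X=\sum_{j} \sqrt{\lambda_j}\, u_j v_j^*$, where $\{v_j\}_{j=1}^N$ are orthonormal eigenvectors of $X^*X$ and $\{u_j\}$ the corresponding left singular vectors (eigenvectors of $XX^*$), and $\lambda_j\ge 0$ the eigenvalues of $X^*X$.

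For part (1), the inequality $\max_{\mathfrak i\mathfrak j}|G_{\mathfrak i\mathfrak j}(z)|\le \|G(z)\|_2$ is immediate since a matrix entry is bounded by the operator norm. For the bound $\|G(z)\|_2\le C'/\eta$, I would split into the four blocks. The $R$ block satisfies $\|R(z)\|_2=\max_j|\lambda_j-z|^{-1}\le 1/\eta$, and similarly $\|\mathcal R(z)\|_2\le 1/\eta$, so $\|z\mathcal R\|_2\le |z|/\eta\le C/\eta$ using $|z|<C$. For the off-diagonal blocks, $\|X^*\mathcal R(z)\|_2=\max_j \sqrt{\lambda_j}\,|\lambda_j-z|^{-1}$; since $\sqrt{\lambda_j}\,|\lambda_j-z|^{-1}\le \sqrt{\lambda_j}(\lambda_j^2+\eta^2)^{-1/2}\cdot(\text{something})$ — more cleanly, $\frac{\sqrt\lambda}{|\lambda-z|}\le\frac{\sqrt\lambda}{\sqrt{(\lambda-E)^2+\eta^2}}$, and one checks that $\frac{\sqrt\lambda}{\sqrt{(\lambda-E)^2+\eta^2}}\le \frac{C'}{\sqrt\eta}$ when $|E|\le C$ is bounded; actually the cleaner route is to note $\|X^*\mathcal R\|_2^2=\|\mathcal R X X^* \mathcal R\|_2\le \|\mathcal R\|_2\cdot\|(XX^*-z)\mathcal R + z\mathcal R\|_2\le \frac1\eta(1+\frac{|z|}\eta)\le \frac{C}{\eta^2}$, hence $\|X^*\mathcal R\|_2\le C/\eta$, and likewise $\|XR\|_2\le C/\eta$. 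Combining the four block bounds via $\|G\|_2\le \sum\|\text{blocks}\|_2$ gives the claim. I expect part (1) to be routine.

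For part (2), the first identity is the standard Ward identity: for $1\le b\le N$ we are summing $|G_{b\mathfrak k}|^2$ over $\mathfrak k$ in the first $N$ coordinates, which is $(RR^*)_{bb}=\big((X^*X-z)^{-1}(X^*X-\bar z)^{-1}\big)_{bb}$. Using the resolvent identity $(X^*X-z)^{-1}(X^*X-\bar z)^{-1}=\frac{1}{\bar z - z}\big((X^*X-z)^{-1}-(X^*X-\bar z)^{-1}\big)=\frac{1}{2\ii\eta}\cdot 2\ii\,\Im R=\frac{\Im R}{\eta}$ (diagonal entries), we get $\sum_{\mathfrak k=1}^N|G_{b\mathfrak k}|^2=\frac{\Im G_{bb}}{\eta}$ since $G_{bb}=R_{bb}$ in this range. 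For the second identity, $\sum_{\mathfrak k=N+1}^{N+M}|G_{b\mathfrak k}|^2=(X^*\mathcal R\mathcal R^* X)_{bb}=\big(X^*(XX^*-z)^{-1}(XX^*-\bar z)^{-1}X\big)_{bb}$; inserting $XX^*$ between the two resolvents via $(XX^*-z)^{-1}XX^*(XX^*-\bar z)^{-1}$ and bounding $\|XX^*\|_2=\|X^*X\|_2$, one gets this is $\le C'\|X^*X\|_2 \big(X^*(XX^*-z)^{-1}(XX^*-\bar z)^{-1}X\cdot(\dots)\big)$ — more carefully, write $X^*\mathcal R\mathcal R^* X = X^*\mathcal R \mathcal R^* X$ and compare it with $RR^* = X^*\mathcal R\mathcal R^* X /\lambda$ type relations at the level of singular values: since $\frac{\lambda}{|\lambda-z|^2}\le \|X^*X\|_2\cdot\frac{1}{|\lambda-z|^2}$ for each eigenvalue $\lambda$, this yields $X^*\mathcal R\mathcal R^* X \preceq \|X^*X\|_2\, RR^*$ as positive semidefinite matrices, hence the diagonal entry bound follows. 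The third and fourth identities for $N+1\le\alpha\le N+M$ are handled symmetrically using $G_{\alpha\mathfrak k}$ for $\mathfrak k$ in the $XX^*$ block equal to $(z\mathcal R)_{\alpha-N,\cdot}$: $\sum_{\mathfrak k=N+1}^{N+M}|G_{\alpha\mathfrak k}|^2=|z|^2(\mathcal R\mathcal R^*)_{\alpha\alpha}=|z|^2\frac{\Im \mathcal R_{\alpha\alpha}}{\eta}$; writing $\Im\mathcal R_{\alpha\alpha}=\frac{1}{|z|^2}\Im(z^2\mathcal R_{\alpha\alpha})+O(\dots)$ and using $\Im(z\mathcal R)_{\alpha\alpha}=\Im G_{\alpha\alpha}$ together with $c<|z|<C$, one extracts the stated form with the additive $+2$ absorbing the contribution of the $z$ versus $z^2$ discrepancy (concretely, $|z|^2\Im\mathcal R_{\alpha\alpha}=\Im(|z|^2\mathcal R_{\alpha\alpha})$ and one relates $|z|^2\mathcal R$ to $z\mathcal R = z^{-1}\cdot|z|^2\mathcal R$ combined with $z\mathcal R = G_{\alpha\alpha}$ plus the identity $-I_M$ correction built into $H$, which is where the bounded constant $2$ comes from). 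The last inequality is the mixed sum $\sum_{\mathfrak k=1}^N|G_{\alpha\mathfrak k}|^2=|z|^{-2}\cdot|z|^2(\mathcal R X X^* \mathcal R)_{\alpha\alpha}$ wait — $G_{\alpha\mathfrak k}=(XR)_{\alpha-N,\mathfrak k}$, so this sum is $(XRR^*X^*)_{\alpha\alpha}\le \|X^*X\|_2(\mathcal R\mathcal R^*\cdot\text{stuff})$, again by the singular-value comparison $\frac{\lambda}{|\lambda-z|^2}\le\|X^*X\|_2\frac{1}{|\lambda-z|^2}$ now read on the $XX^*$ side, giving $XRR^*X^* \preceq \|X^*X\|_2\,(z\mathcal R)(z\mathcal R)^*/|z|^2 \sim \|X^*X\|_2 \sum_{\mathfrak k=N+1}^{N+M}|G_{\alpha\mathfrak k}|^2$.

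The main obstacle is purely bookkeeping: correctly tracking which block of $G$ corresponds to which index range, and handling the $z$-prefactors in the $\alpha$-row identities so that the bounded-constant term "$+2$" and the constants $C'$ come out cleanly — in particular making precise the step $z\mathcal R = $ (Green function block) and relating $\Im\mathcal R_{\alpha\alpha}$ to $\Im G_{\alpha\alpha}$ given that $G_{\alpha\alpha}=z\mathcal R_{\alpha\alpha}$ while the naive Ward identity produces $\Im\mathcal R_{\alpha\alpha}$, not $\Im(z\mathcal R_{\alpha\alpha})$; the discrepancy $\Im(z\mathcal R_{\alpha\alpha})-|z|^2\Im\mathcal R_{\alpha\alpha}=\Re z\cdot\Re\mathcal R_{\alpha\alpha}\cdot(\text{sign})$ plus use of $\mathcal R_{\alpha\alpha}=O(1/\eta)$ is what forces the additive bounded term rather than a clean multiplicative statement. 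Since all of this is elementary linear algebra with the spectral theorem, no deep input beyond~\eqref{hermitization_inverse} and the condition $c<|z|<C$ is needed, and I would present it as a short computation block by block.
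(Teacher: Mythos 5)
Your plan — reading off each sum from the explicit block form of $G(z)$ in~\eqref{hermitization_inverse} and then comparing the blocks $R$, $\mathcal{R}$, $XR$, $X^*\mathcal{R}$ simultaneously through the singular value decomposition of $X$ — is the right and in fact standard route; the paper itself does not prove this lemma but cites Lemma~4.6 of~\cite{isotropic}, which proceeds the same way. Part~(1) and the first, second and fourth estimates in part~(2) are handled correctly: the operator-norm bounds on each block, the Ward identity $\sum_{\mathfrak{k}\le N}|G_{b\mathfrak{k}}|^2=(RR^*)_{bb}=\Im G_{bb}/\eta$, and the PSD comparison $X^*\mathcal{R}\mathcal{R}^*X=R\,(X^*X)\,R^*\preceq\|X^*X\|_2\,RR^*$ (resp.\ $XRR^*X^*=\mathcal{R}\,(XX^*)\,\mathcal{R}^*\preceq\|XX^*\|_2\,\mathcal{R}\mathcal{R}^*$) via $\lambda/|\lambda-z|^2\le\|X^*X\|_2/|\lambda-z|^2$ are all correct. (A small slip: the resolvent identity gives $RR^*=(R-R^*)/(z-\bar z)=\Im R/\eta$, not $(R-R^*)/(\bar z-z)$, which would have the wrong sign; your final identity is nevertheless right.)

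The genuine gap is in the third inequality. You attribute the additive ``$+2$'' to a discrepancy $\Im(z\mathcal{R}_{\alpha\alpha})-|z|^2\Im\mathcal{R}_{\alpha\alpha}$ which you claim is controlled by ``$\Re z\cdot\Re\mathcal{R}_{\alpha\alpha}$'' together with ``$\mathcal{R}_{\alpha\alpha}=O(1/\eta)$''. Neither piece works: the actual discrepancy is $(E-E^2-\eta^2)\Im\mathcal{R}_{\alpha\alpha}+\eta\Re\mathcal{R}_{\alpha\alpha}$, and since you must ultimately divide by $\eta$, bounding $\mathcal{R}_{\alpha\alpha}$ by $O(1/\eta)$ only yields an unbounded $O(1/\eta)$ correction, not $O(1)$; this step would not close. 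The correct mechanism is the resolvent identity $z\mathcal{R}=XX^*\mathcal{R}-I_M$ (the ``$-I_M$ correction'' that you name only in passing) together with the row normalization $\sum_\mu\delta_{\alpha'\mu}^2=1$, where $\alpha'=\alpha-N$. Explicitly, $(z\mathcal{R})_{\alpha'\mu}=-\delta_{\alpha'\mu}+(XX^*\mathcal{R})_{\alpha'\mu}$, so $\sum_\mu|(z\mathcal{R})_{\alpha'\mu}|^2\le 2+2\,(XX^*\mathcal{R}\mathcal{R}^*XX^*)_{\alpha'\alpha'}$ by $|a+b|^2\le2|a|^2+2|b|^2$. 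Since $XX^*$ commutes with $\mathcal{R}$ and $\mathcal{R}^*$ and $\mathcal{R}\mathcal{R}^*=\Im\mathcal{R}/\eta$, one has $XX^*\mathcal{R}\mathcal{R}^*XX^*=(XX^*)^2\,\Im\mathcal{R}/\eta\preceq\|XX^*\|_2\,XX^*\Im\mathcal{R}/\eta=\|X^*X\|_2\,\Im(XX^*\mathcal{R})/\eta=\|X^*X\|_2\,\Im(z\mathcal{R})/\eta$, whose $(\alpha',\alpha')$ entry is $\|X^*X\|_2\,\Im G_{\alpha\alpha}/\eta$. This gives the stated bound with $C'=2$. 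At the scalar level this is just $z/(\lambda-z)=-1+\lambda/(\lambda-z)$ applied eigenvalue by eigenvalue and summed against $\sum_j|u_j(\alpha')|^2=1$. Without the $-I_M$ cancellation and the unit normalization of spectral weights, the ``$z$ versus $z^2$ discrepancy'' you describe does not produce a bounded additive constant, so as written the sketch does not establish the third estimate.
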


Before we state the local Marchenko--Pastur law for the sample covariance matrix $X^*X$, we recall that the probability density function of the Marchenko-Pastur law (finite $N$ version) is given by
$$\dd\mu_{\mathrm{MP}, \varrho}(x):=\frac{1}{2 \pi \varrho} \sqrt{\frac{(x-E_-)(E_+-x)}{x^2}} \one_{[E_-,E_+]}\dd x,\qquad \varrho \equiv \varrho_N \geq 1,$$
 with the two spectral edge points
 \begin{align}\label{endpoint}
 E_{\pm}=(1 \pm \sqrt{\varrho})^2.
 \end{align}
The Stieltjes transform of the Marchenko--Pastur distribution, denoted by $ \wt m$ for short, is then characterized as the unique solution~to the equation
\begin{equation}\label{mplawgamma-1}
z \wt m^2(z)+(z+1-\varrho) \wt m(z)  +1=0,
\end{equation}
such that $\Im \wt m(z)>0$, $z\in \C^+$. We remark that $\wt m$ and $E_\pm$ depend on the matrix dimension $N$ via the aspect ratio $\varrho \equiv \varrho_N$ in (\ref{ratio}). The following lemma summarizes some quantitative properties of $\wt m$.

For any fixed small $\epsilon>0$ and small $0<c<1$, we introduce the spectral domain
\begin{equation}\label{ddd}
S \equiv S(\epsilon,c):=\Big\{z=E+\ii \eta:  |z| \geq c,  \kappa \leq c^{-1}, N^{-1+\epsilon} \leq \eta \leq  1 \Big\},
\end{equation}
where $\kappa \equiv \kappa(E)$ denotes the distance to the two spectral edges in (\ref{endpoint}), \ie
$$\kappa \equiv \kappa(E):=\min \{ |E_+-E|, |E_--E| \}.$$

\begin{lemma}[Theorem 3.1\cite{bao}; Lemma 3.3 \cite{Alex+Erdos+Knowles+Yau+Yin}]\label{sample_m}
	\begin{enumerate}
		\item For $z \in S$ and sufficiently large $N$ (depending on the convergence rate in (\ref{condition})), we have 
		\begin{equation}\label{1+tm}
			| \wt m(z)| \sim 1; \qquad  |1+   \wt m(z)| \sim 1.
		\end{equation}	
		\item	For $z \in S$ and sufficiently large $N$, we have 
		\begin{equation}\label{im_m_fc}
			|\Im \wt m(z)|  \sim \begin{cases}
				\sqrt{\kappa+\eta}, & \mbox{if } E \in [E_-,E_+], \\
				\frac{\eta}{\sqrt{\kappa+\eta}}, & \mbox{otherwise}.
			\end{cases}
		\end{equation}
	\end{enumerate}
\end{lemma}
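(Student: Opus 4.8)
The statement to prove is Lemma~\ref{sample_m}, which records two quantitative estimates on the Stieltjes transform $\wt m$ of the Marchenko--Pastur law on the spectral domain $S$: namely $|\wt m| \sim 1$, $|1+\wt m| \sim 1$, and the size of $\Im \wt m$ in terms of $\kappa$ and $\eta$.

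\medskip

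\textbf{Proof plan.} The plan is to work directly from the algebraic characterization~\eqref{mplawgamma-1} of $\wt m$, namely $z \wt m^2 + (z+1-\varrho)\wt m + 1 = 0$, which gives the explicit formula
\begin{equation}\nonumber
\wt m(z) = \frac{-(z+1-\varrho) + \sqrt{(z+1-\varrho)^2 - 4z}}{2z} = \frac{-(z+1-\varrho)+\sqrt{(z-E_-)(z-E_+)}}{2z},
\end{equation}
with the branch of the square root fixed by the requirement $\Im \wt m(z)>0$ for $z \in \C^+$ and the normalization $z\wt m(z) \to -1$ as $z \to \infty$; here one uses that $(z+1-\varrho)^2 - 4z = (z-E_-)(z-E_+)$ with $E_\pm = (1\pm\sqrt\varrho)^2$, which is a direct computation. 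Since $\varrho = \varrho_N \to \varrho_0 \in [1,\infty)$, for large $N$ we have $\varrho \in [1, \varrho_0+1]$ say, so all quantities $E_\pm$, $1-\varrho$, etc.\ are bounded and bounded away from the relevant degeneracies uniformly in $N$; this is where "sufficiently large $N$ depending on the convergence rate" enters.

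\medskip

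First I would prove part (1). On $S$ we have $|z|\ge c$ and $|z|$ bounded above (since $\kappa \le c^{-1}$ and $E_\pm$ are bounded, $E$ is bounded, and $\eta \le 1$), and moreover $z$ stays a bounded distance from $0$. From the quadratic, $\wt m = -1/(z\wt m + z + 1 - \varrho)$, and also $1/\wt m = -(z\wt m + z + 1 -\varrho)$; combining with the trivial a priori bound $|\wt m(z)| \le 1/\eta$ is not quite enough, so instead I would argue as follows. Since $\Im \wt m >0$ and $\wt m$ extends continuously to $S$, one checks from the explicit formula that $\wt m$ does not vanish on $S$ (the numerator $-(z+1-\varrho)+\sqrt{(z-E_-)(z-E_+)}$ vanishes only where $1 = z\wt m^2 = 0$, impossible since $|z|\ge c$); combined with continuity and compactness of the closure of $S$, and the fact that both $\wt m$ and $1+\wt m$ are bounded on $S$ (from $\wt m = O(1/\eta)$ near the real axis one needs a better bound — actually $\Im \wt m$ is bounded by~\eqref{im_m_fc}, and $\Re \wt m$ is bounded from the explicit formula since everything under the root is bounded), one obtains $|\wt m| \sim 1$. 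For $|1+\wt m| \sim 1$: from~\eqref{mplawgamma-1} one derives $(1+\wt m)(z\wt m+1) = \varrho \wt m$ — indeed expanding $z\wt m^2 + \wt m + z\wt m + (1-\varrho)\wt m + 1 = z\wt m^2 + z\wt m + \wt m + 1 - \varrho \wt m = (z\wt m+1)(\wt m+1) - \varrho\wt m$, so $(1+\wt m)(1+z\wt m)=\varrho\wt m$. Since $|\wt m|\sim 1$ and $\varrho \sim 1$, it suffices to show $|1+z\wt m|\sim 1$; but $1+z\wt m = -(z\wt m^2 + (1-\varrho)\wt m) = -\wt m(z\wt m + 1 - \varrho) = \wt m/\wt m \cdot (\ldots)$, and more simply $z\wt m = -1/\wt m \cdot(z\wt m^2)$; cleanest is to note $1+z\wt m(z) \to 0$ is impossible on $S$ since at such a point $\varrho \wt m = 0$. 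So by the same compactness/continuity argument $|1+\wt m|\sim 1$.

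\medskip

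For part (2), I would write $\Im \wt m = \Im\!\big[\sqrt{(z-E_-)(z-E_+)}\big]/(2|z|^2) \cdot \Re z$-type expressions — more carefully, $\Im \wt m = \Im\big( \frac{\sqrt{(z-E_-)(z-E_+)}}{2z}\big) + \Im\big(\frac{-(z+1-\varrho)}{2z}\big)$. Since $1/z$ is bounded and smooth away from $0$ on $S$, the behavior of $\Im\wt m$ is governed by $\Im\big[ z^{-1}\sqrt{(z-E_-)(z-E_+)}\big]$. Writing $z = E + \ii\eta$ with $E$ near one of the edges, say $E_+$, so $z - E_+ = (E-E_+) + \ii\eta$ and $z - E_- \approx E_+ - E_- = 4\sqrt\varrho \sim 1$: then $\sqrt{(z-E_-)(z-E_+)} \sim \sqrt{z-E_+} = \sqrt{(E-E_+)+\ii\eta}$, and the standard estimate for the square root gives $|\sqrt{(E-E_+)+\ii\eta}| \sim \sqrt{\kappa+\eta}$ with $\Im\sqrt{(E-E_+)+\ii\eta} \sim \sqrt{\kappa+\eta}$ when $E \in [E_-,E_+]$ (inside the bulk, where $E-E_+<0$ makes the quantity have a genuine imaginary part of that order) and $\Im\sqrt{\cdots}\sim \eta/\sqrt{\kappa+\eta}$ when $E \notin [E_-,E_+]$ (outside, where the real part dominates). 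Dividing by the bounded nonvanishing factor $2z$ preserves these orders up to constants, and the term $\Im(-(z+1-\varrho)/(2z)) = O(\eta)$ is lower order in both regimes (since $\eta \le \eta/\sqrt{\kappa+\eta}$ only when $\kappa+\eta\le1$, which holds on $S$, and $\eta \le \sqrt{\kappa+\eta}$ similarly). One handles $E$ near $E_-$ symmetrically, and for $E$ bounded away from both edges the estimates are elementary. Actually, rather than reproving this from scratch, I would observe that this is precisely the classical stability/behavior statement for the Stieltjes transform of a square-root-edge measure, and cite Lemma 3.3 of~\cite{Alex+Erdos+Knowles+Yau+Yin} and Theorem 3.1 of~\cite{bao} directly, supplying only the short reduction to those references via the uniform control $\varrho_N \to \varrho_0 \in [1,\infty)$.

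\medskip

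\textbf{Main obstacle.} The only genuine subtlety is uniformity in $N$: the constants implicit in $\sim$ must not degenerate as $\varrho_N \to \varrho_0$. This is controlled because $E_+ - E_- = 4\sqrt{\varrho_N}$ stays bounded away from $0$ (using $\varrho_0 \ge 1$, so even if $\varrho_0=1$ we have $E_+-E_-=4$), $|z|\ge c$ keeps the $1/z$ factors bounded, and the hypothesis "$N$ sufficiently large depending on the convergence rate in~\eqref{condition}" ensures $\varrho_N$ is within a fixed small neighborhood of $\varrho_0$; I would make this quantitative dependence explicit at the start of the argument. Everything else is a routine case analysis ($E$ near $E_+$, near $E_-$, or in between) combined with the elementary asymptotics of $\sqrt{\kappa+\ii\eta}$.
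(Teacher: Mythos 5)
The paper offers no proof of this lemma at all: it is imported verbatim from Theorem 3.1 of \cite{bao} and Lemma 3.3 of \cite{Alex+Erdos+Knowles+Yau+Yin}, so your proposal is being compared against a citation rather than an argument. What you sketch is essentially the standard proof that those references contain. Part (1) is correct and complete in substance: the factorization $(z+1-\varrho)^2-4z=(z-E_-)(z-E_+)$ is right, the relation $\wt m\,(z\wt m+z+1-\varrho)=-1$ gives the lower bound on $|\wt m|$ once the upper bound is read off the explicit formula (using $|z|\ge c$, $\kappa\le c^{-1}$ and $\varrho_N\to\varrho_0$), and your identity $(1+\wt m)(1+z\wt m)=\varrho\wt m$ checks out and yields $|1+\wt m|\gtrsim1$ from the trivial bound $|1+z\wt m|\lesssim1$; the matching upper bound on $|1+\wt m|$ is immediate, so the detour about $1+z\wt m$ not vanishing is unnecessary. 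The uniformity discussion (in particular $E_+-E_-=4\sqrt{\varrho_N}\ge4$ since $\varrho_N\ge1$, and $|z|\ge c$ shielding the hard lower edge when $\varrho_0=1$) is exactly the right thing to make explicit.

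The one step in part (2) that is looser than it should be is the claim that the corrections to $\Im\bigl[\sqrt{(z-E_-)(z-E_+)}\bigr]$ are of lower order. Writing $\Im\bigl[w/(2z)\bigr]=\Re(1/(2z))\,\Im w+\Im(1/(2z))\,\Re w$ and adding $\Im\bigl[-(z+1-\varrho)/(2z)\bigr]=(1-\varrho)\eta/(2|z|^2)$, the correction terms are of size $O(\eta)$ while the main term is of size $\sqrt{\kappa+\eta}$ (inside) or $\eta/\sqrt{\kappa+\eta}$ (outside); when $\kappa+\eta\sim1$ these are all comparable, so "lower order" does not by itself give the two-sided bound with uniform constants over all of $S$. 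This is repairable either by tracking signs and constants in that regime, or more cleanly by the integral representation $\Im\wt m(E+\ii\eta)=\int\eta\,\rho_\varrho(x)\,((x-E)^2+\eta^2)^{-1}\,\dd x$ against the explicit square-root density, which gives \eqref{im_m_fc} directly by splitting the integral; and in any case your fallback of citing \cite{bao} and \cite{Alex+Erdos+Knowles+Yau+Yin} for part (2) is precisely what the paper itself does. So: no fatal gap, but the domination argument in part (2) should be replaced by one of these two cleaner routes.
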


Next, we introduce the deterministic control parameter
\begin{equation}\label{control_sample}
\Psi(z):=\sqrt{ \frac{\Im  \wt m(z)}{N \eta}} +\frac{1}{N \eta}\,, \qquad z=E+\ii \eta \in \C^+,
\end{equation}
and the deterministic $(N+M) \times (N+M)$ block matrix
\begin{align}\label{Pi}
\Pi \equiv \Pi(z):=\begin{pmatrix}
\wt m(z)& 0\\
0 & - (1+ \wt m(z))^{-1} 
\end{pmatrix}.
\end{align}

We are now ready to state the (anisotropic) local law for the Green function $G$ in (\ref{hermitization_inverse}) .
\begin{theorem}\label{le theorem local law}(Theorem 2.4 in \cite{Alex+Erdos+Knowles+Yau+Yin}, Theorem 3.6 in \cite{isotropic})\label{locallaw}
	Let $X$ be a random matrix satisfying (\ref{sample_condition_1}), (\ref{sample_condition_2}), (\ref{moment_condition_sample}) and (\ref{condition}). For any deterministic unit vectors $v,w \in \C^{N+M}$, we have uniformly in $z  \in S$ that
	\begin{align}\label{G_0}
	\Big| \langle v,\big( G(z)-\Pi(z)\big)  w \rangle   \Big| \prec \Psi(z).
	\end{align}
	Moreover, the normalized (partial) trace $m_N(z)$ in (\ref{m_sample}) satisfies
	\begin{align}\label{G_average}
	|m_N(z)-\wt m(z)| \prec \frac{1}{N \eta}.
	\end{align}
\end{theorem}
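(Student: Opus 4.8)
The plan is to prove the stated local law in two stages: first establish the \emph{entrywise} local law --- that the diagonal entries of $G(z)$ are close to the corresponding entries of $\Pi(z)$ (i.e.\ $\wt m(z)$ in the first block, $-(1+\wt m(z))^{-1}$ in the second) while all off-diagonal entries are $O_\prec(\Psi(z))$ --- and then upgrade it to the anisotropic estimate \eqref{G_0} by a high-moment argument. One works throughout on the spectral domain $S$ and fixes $z=E+\ii\eta$. The starting point is the self-consistent equation: applying the Schur complement formula to $H(z)$ with respect to a single index $a\in\llbracket1,N\rrbracket$ expresses $1/G_{aa}$ as $-z$ minus a quadratic form in the $a$-th column of $X$ against the resolvent of a minor; taking the conditional expectation of that quadratic form replaces it by $\frac1N\Tr\mathcal R^{(a)}\approx\m(z)$, and similarly for $a\in\llbracket N+1,N+M\rrbracket$ one gets $1/G_{aa}\approx -1-zm(z)$. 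Using the minor-removal bound $\Tr\mathcal R^{(a)}=\Tr\mathcal R+O(\eta^{-1})$ and the identity \eqref{gamma_N} relating $m$ and $\m$, these reduce to a perturbed version of the Marchenko--Pastur equation
\[
  z\,m^2+(z+1-\varrho)\,m+1=0
\]
satisfied by $\wt m$ in \eqref{mplawgamma-1}.

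Next I would control the random error terms and invoke stability. Each error in the previous step is a centered quadratic form $\mathbf v^*A\mathbf v-\tfrac1N\Tr A$ in the i.i.d.\ entries of a column/row of $X$; by the standard large-deviation bound for such forms (which uses precisely the moment condition \eqref{moment_condition_sample}), together with the deterministic bound \eqref{deter_bound}, the Ward identities \eqref{ward}, and a weak a priori bound $\max_{\mathfrak i}|G_{\mathfrak i\mathfrak i}|\le C$, every such error is $O_\prec(\Psi(z))$; off-diagonal entries $G_{\mathfrak i\mathfrak j}$ are handled the same way and shown to be $O_\prec(\Psi)$. Feeding the bound $O_\prec(\Psi)$ into the stability analysis of the above cubic/quadratic self-consistent equation --- using $|\wt m|\sim1$, $|1+\wt m|\sim1$, and $|\Im\wt m|\sim\sqrt{\kappa+\eta}$ from Lemma~\ref{sample_m} to bound the derivative of the defining polynomial away from its (near-)double root --- yields $|m_N(z)-\wt m(z)|\prec\Psi(z)$ and then, returning to the individual Schur identities, the full entrywise law. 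The required a priori bound is obtained by a continuity argument: the estimates are elementary for $\eta\sim1$, and decreasing $\eta$ along a sufficiently fine grid propagates them down to $\eta=N^{-1+\epsilon}$, the lower edge of $S$.

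For the anisotropic bound \eqref{G_0} I would estimate the high moments $\E\big|\langle \mathbf v,(G(z)-\Pi(z))\mathbf w\rangle\big|^{2p}$ for deterministic unit vectors $\mathbf v,\mathbf w$. Expanding the inner product in the entries of $G-\Pi$ and applying a single-index Schur/resolvent expansion repeatedly, every resulting term is a sum over repeated indices of products of entrywise-controlled Green function entries weighted by coordinates of $\mathbf v$ and $\mathbf w$; the index sums are bounded by Cauchy--Schwarz and the Ward identities \eqref{ward}, which trade $\sum_{\mathfrak k}|G_{\mathfrak i\mathfrak k}|^2$ for $\Im G_{\mathfrak i\mathfrak i}/\eta\prec N\Psi^2$. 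Counting powers of $N$ gives $\E|\langle\mathbf v,(G-\Pi)\mathbf w\rangle|^{2p}\prec\Psi^{2p}$, and \eqref{G_0} follows by Markov's inequality and a union bound over a polynomial net of unit vectors. Finally, the averaged estimate \eqref{G_average} is obtained by a fluctuation-averaging argument: after averaging $G_{aa}-\wt m$ over $a$, the leading fluctuation terms exhibit an extra cancellation, being themselves centered quadratic forms that concentrate at the improved scale $1/(N\eta)$; a bounded number of self-improving iterations upgrades $\Psi$ to $1/(N\eta)$.

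The main obstacle is the stability of the self-consistent equation near the two spectral edges $E_\pm$, where the defining polynomial develops a near-double root and naive inversion costs a factor $(\kappa+\eta)^{-1/2}$. This loss is compensated exactly by the $\sqrt{\Im\wt m}\sim(\kappa+\eta)^{1/4}$ present in the definition \eqref{control_sample} of $\Psi$ (so that the feedback map stays contractive down to $\eta=N^{-1+\epsilon}$ when $\kappa\lesssim1$), but making this compensation quantitative and uniform over $z\in S$ while simultaneously closing the bootstrap for the a priori bound is the delicate part. A secondary technical nuisance is that $H(z)$ is not self-adjoint, so the Ward identities come in the several block-dependent variants listed in \eqref{ward}, and in both the entrywise estimates and the anisotropic moment expansion one must carefully track which block each summation index belongs to.
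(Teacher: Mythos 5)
This theorem is imported into the paper by citation from \cite{Alex+Erdos+Knowles+Yau+Yin} and \cite{isotropic}; the paper gives no proof of its own, and your sketch --- Schur complement and large-deviation control of the perturbed Marchenko--Pastur relation, a continuity bootstrap in $\eta$, fluctuation averaging for (\ref{G_average}), and the polynomialization/high-moment argument for the anisotropic bound (\ref{G_0}) --- is exactly the standard strategy in those references. One small bookkeeping slip: with the linearization (\ref{hermitization}) and the block formula (\ref{hermitization_inverse}), the conditional-expectation step for $a\in\llbracket N+1,N+M\rrbracket$ yields $1/G_{aa}\approx -1-m(z)$ (not $-1-zm(z)$), since the averaged top-left block of $G^{(a)}$ is $\approx m$; the factor $z$ appears instead in the complementary expansion, where for $a\le N$ one has $\frac1N\sum_{\alpha>N}G^{(a)}_{\alpha\alpha}\approx\varrho\, z\,\mathfrak m(z)$ rather than $\mathfrak m(z)$, reflecting $z\mathcal R$ in (\ref{hermitization_inverse}) and the ratio $\varrho=M/N$.
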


The local law for the Green function $G$ in Theorem~\ref{le theorem local law} implies the following eigenvalue rigidity results for the sample covariance matrix $X^*X$. Recall that the eigenvalues of $X^*X$ are denoted by $(\lambda_j)_{j=1}^N$ arranged in non-decreasing order.

For any $E_1<E_2$ ($E_1,E_2\in\R^+ \cup\{\infty\}$) denote the eigenvalue counting function by
\begin{equation}\label{number_particle}
\mathcal{N}(E_1,E_2):=\# \{j: E_1 \leq \lambda_j \leq E_2\}\,. 
\end{equation}  
We also define the classical location $\gamma_j$ of the $j$-th eigenvalue $\lambda_j$ by
\begin{equation}\label{classical}
\frac{j}{N}=\int_{0}^{\gamma_j}  \dd \mu_{\mathrm{MP},\varrho} (x).
\end{equation}

\begin{theorem}[Rigidity of eigenvalues, Theorem 2.10 \cite{Alex+Erdos+Knowles+Yau+Yin}]
Fix a small $c >0$. Then for any $c \leq E_1<E_2$, 
\begin{equation}\label{rigidity3}
 \Big| \mathcal{N}(E_1,E_2)-N \int_{E_1}^{E_2} \dd \mu_{\mathrm{MP},\varrho} (x)  \Big| \prec 1\,.
\end{equation}
In addition, for any $1\leq j \leq N$ such that $\gamma_{j} \geq c$, we have
\begin{equation}\label{rigidity4}
 |\lambda_j-\gamma_j| \prec N^{-2/3} \Big( \min \{ j, N-j+1\} \Big)^{-1/3}\,.
\end{equation}
In particular, fix some constants $C_1,C_2>0$, then for any small $\tau>0$ and large $\Gamma>0$ we have
\begin{equation}\label{rigidity1}
|\lambda_N-E_+ | \leq  C_1 N^{-2/3+\tau}, \qquad \mathcal{N}(E_+-C_2 N^{-2/3}, \infty) \leq N^{\tau},
\end{equation}
with probability bigger than $1-N^{\Gamma}$, for $N$ sufficiently large.
\end{theorem}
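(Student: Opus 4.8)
The plan is to deduce all three displays from the averaged local law~\eqref{G_average} together with the quantitative bounds on $\wt m$ in Lemma~\ref{sample_m}, using the Helffer--Sjöstrand functional calculus to convert control of $m_N-\wt m$ on the domain $S$ of~\eqref{ddd} into control of the counting function $\mathcal N$. The role of the hypotheses $E_1\ge c$ and $\gamma_j\ge c$ is to keep every spectral parameter $z=E+\ii\eta$ that enters the argument away from $0$ and from the lower (hard) edge, so that it lies in $S(\epsilon,c')$ for a suitable $c'>0$ and the estimates~\eqref{G_average},~\eqref{1+tm},~\eqref{im_m_fc} apply; this is also why no rigidity is claimed at the lower edge when $\varrho_0=1$.

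First I would prove~\eqref{rigidity3}. Fix $E_1<E_2$ with $E_1\ge c$, set $\eta_0:=N^{-1+\epsilon}$, and choose a smooth $f$ with $f\equiv1$ on $[E_1,E_2]$, supported within distance $\eta_0$ of it, and $|f'|\lesssim\eta_0^{-1}$, $|f''|\lesssim\eta_0^{-2}$. By Helffer--Sjöstrand one writes $\int f\,\dd\mu_N-\int f\,\dd\mu_{\mathrm{MP},\varrho}$ as a planar integral of $\partial_{\bar z}$ of an almost-analytic extension of $f$ against $m_N-\wt m$, and splits the $\eta$-integration at level $\eta_0$. On $\eta\ge\eta_0$ one inserts $|m_N-\wt m|\prec(N\eta)^{-1}$ from~\eqref{G_average}; on $\eta<\eta_0$ one uses that there $\partial_{\bar z}$ of the extension equals $\tfrac{\ii}{2}\eta f''$ and bounds $|\Im m_N|+|\Im\wt m|\prec\eta_0/\eta$ via the monotonicity of $\eta\mapsto\eta\,\Im m_N(E+\ii\eta)$ together with~\eqref{im_m_fc}. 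Both contributions are $O_\prec(N^{\epsilon-1})$, so $\int f\,\dd\mu_N=\int f\,\dd\mu_{\mathrm{MP},\varrho}+O_\prec(N^{\epsilon-1})$. Since $f$ and $\one_{[E_1,E_2]}$ differ only on two $\eta_0$-windows, in each of which the number of eigenvalues is $O_\prec(N\eta_0)=O_\prec(N^{\epsilon})$ (by the trivial bound $\mathcal N(a-\eta_0,a+\eta_0)\lesssim N\eta_0\,\Im m_N(a+\ii\eta_0)$ and~\eqref{im_m_fc}), replacing $N\int f\,\dd\mu_N$ by $\mathcal N(E_1,E_2)$ and multiplying by $N$ yields $\mathcal N(E_1,E_2)=N\int_{E_1}^{E_2}\dd\mu_{\mathrm{MP},\varrho}+O_\prec(N^{\epsilon})$, which is~\eqref{rigidity3} since $O_\prec(N^{\epsilon})=O_\prec(1)$.

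To pass from~\eqref{rigidity3} to~\eqref{rigidity4} and~\eqref{rigidity1}, I would use that $\mu_{\mathrm{MP},\varrho}$ is a probability measure on $[E_-,E_+]$, so~\eqref{classical} gives $N\int_{\gamma_j}^{\infty}\dd\mu_{\mathrm{MP},\varrho}=N-j$ and $\gamma_N=E_+$, and that the square-root vanishing of its density at $E_+$ (read off from~\eqref{im_m_fc}) gives $E_+-\gamma_j\sim\big((N-j+1)/N\big)^{2/3}$ with classical spacing $\sim N^{-2/3}(N-j+1)^{-1/3}$ near the upper edge, the mirror statement at the lower soft edge $E_-$ when $\varrho_0>1$, and the usual $\sim N^{-1}$ in the bulk, i.e. in all cases spacing $\sim N^{-2/3}(\min\{j,N-j+1\})^{-1/3}$. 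Since~\eqref{rigidity3} also holds for $E_2=\infty$, I would apply it to $[\gamma_j-t,\infty)$ and $[\gamma_j+t,\infty)$ with $t:=N^{\tau}N^{-2/3}(\min\{j,N-j+1\})^{-1/3}$ (and $\gamma_j-t\ge c/2$ for large $N$): the change $N\mu_{\mathrm{MP},\varrho}\big([\gamma_j,\gamma_j\pm t)\big)\sim N^{\tau}$ dominates the $O_\prec(1)$ error, so $\mathcal N(\gamma_j+t,\infty)<N-j+1\le\mathcal N(\gamma_j-t,\infty)$ with probability at least $1-N^{-\Gamma}$, which forces $|\lambda_j-\gamma_j|\le t$; as $\tau$ is arbitrary this is~\eqref{rigidity4}. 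Taking $j=N$, where $\min\{j,N-j+1\}=1$ and $\gamma_N=E_+$, gives $|\lambda_N-E_+|\prec N^{-2/3}$, hence $\le C_1N^{-2/3+\tau}$ with probability $\ge1-N^{-\Gamma}$; and applying~\eqref{rigidity3} to $[E_+-C_2N^{-2/3},\infty)$, whose $\mu_{\mathrm{MP},\varrho}$-mass is $O(N^{-1})$, gives $\mathcal N(E_+-C_2N^{-2/3},\infty)=O_\prec(1)\le N^{\tau}$ with the same probability.

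I expect the only genuinely delicate step to be the slab $\eta<\eta_0$ in the Helffer--Sjöstrand integral, where the local law is unavailable: one must keep the full factor $\eta$ supplied by $\partial_{\bar z}$ of the extension and carry the edge weight $\sqrt{\kappa+\eta}$ from~\eqref{im_m_fc} through the a priori bound on $\Im m_N$, which is precisely what makes the final error $O_\prec(1)$ rather than a power of $N$ larger and which forces the choice $\eta_0=N^{-1+\epsilon}$ matching the lower boundary of $S$. Everything after~\eqref{rigidity3} is the routine inversion of a monotone counting function.
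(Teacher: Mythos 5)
This theorem is imported verbatim from Theorem 2.10 of \cite{Alex+Erdos+Knowles+Yau+Yin}; the paper gives no proof of its own, so your reconstruction can only be judged against the standard argument. Your route (Helffer--Sj\"ostrand on scale $\eta_0=N^{-1+\epsilon}$ to get the counting estimate \eqref{rigidity3}, then inversion of the counting function against the classical spacing to get \eqref{rigidity4}) is indeed the standard one, and the treatment of the slab $\eta<\eta_0$ via monotonicity of $\eta\mapsto\eta\,\Im m_N(E+\ii\eta)$ is correct.

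There is, however, one genuine gap: the upper bound $\lambda_N\le E_++C_1N^{-2/3+\tau}$ (and, more generally, the one-sided bound $\lambda_j\le\gamma_j+t$ for $j$ within $N^{o(1)}$ of $N$) does not follow from \eqref{rigidity3}. Your inversion step needs $N\mu_{\mathrm{MP},\varrho}\bigl([\gamma_j,\gamma_j+t)\bigr)$ to dominate the $O_\prec(1)$ error, but this quantity is at most $N-j$, which degenerates to $0$ at $j=N$ (indeed $\gamma_N=E_+$ and the interval $[\gamma_N+t,\infty)$ carries no classical mass at all). So \eqref{rigidity3} only yields $\mathcal N(E_++t,\infty)\le N^{\tau'}$ with high probability, which does not force $\mathcal N=0$, i.e.\ does not exclude $N^{\tau'}$ stray eigenvalues above the edge. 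The missing ingredient is a separate ``no eigenvalues outside the spectrum'' step, which in \cite{Alex+Erdos+Knowles+Yau+Yin} rests on the \emph{improved} averaged local law outside the bulk, $|m_N(z)-\wt m(z)|\prec\frac{1}{N(\kappa+\eta)}+\frac{1}{(N\eta)^2\sqrt{\kappa+\eta}}$ for $E>E_+$; the weaker bound \eqref{G_average} quoted in this paper is not sufficient, since the candidate contradiction $\Im m_N\ge\frac{1}{2N\eta}$ at a hypothetical outlier is swamped by the $N^{\tau''}(N\eta)^{-1}$ error that \eqref{G_average} permits. The lower bounds $\lambda_j\ge\gamma_j-t$, the bulk/upper-edge cases of \eqref{rigidity4} with $N-j+1\gg N^{\tau}$, and the second claim of \eqref{rigidity1} all go through exactly as you describe. (Two minor points: extending \eqref{rigidity3} to $E_2=\infty$ needs the norm bound $\|X^*X\|_2\prec1$, and the mirror square-root analysis at $E_-$ requires $\varrho_0>1$, consistent with the hypothesis $\gamma_j\ge c$.)
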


We end this subsection by stating some properties of the stochastic domination defined in Definition~\ref{definition of stochastic domination}, see \eg Proposition 6.5 \cite{book} for reference. We remark that given any product of matrix entries of the Green function $G(z)$ with spectral parameter $z \in S$ given in (\ref{ddd}), the deterministic upper bound condition in statement (3) below is always satisfied by (\ref{deter_bound}). This argument will be used frequently throughout the paper to estimate the expectations of products of Green function entries using stochastic domination. 
\begin{lemma}\label{dominant_prop}
	Let $X$, $X'$, $Y$, $Y'$, $Z$ be non-negative random variables. Then,
	\begin{enumerate}
		\item $X \prec Y$ and $Y \prec Z$ imply $X \prec Z$;
		\item If $X \prec Y$ and $X' \prec Y'$, then $X+X' \prec Y+Y'$ and $XX' \prec YY';$
		\item If $X \prec Y$, $\E[ Y] \geq N^{-c_1}$ and $|X| \leq N^{c_2}$ almost surely with some fixed exponents $c_1$, $c_2>0$, then we have $\E[ X] \prec \E [Y]$.
	\end{enumerate}
\end{lemma}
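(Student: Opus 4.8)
The plan is to verify the three assertions directly from Definition~\ref{definition of stochastic domination}, in each case controlling the event on which the desired inequality could fail by a union of the exceptional events provided by the hypotheses, and absorbing the resulting numerical constants into the free parameters $\tau,\Gamma$.

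For part (1), fix $\tau>0$ and $\Gamma>0$, and apply $X\prec Y$ and $Y\prec Z$ each with parameters $\tau/2$ and $\Gamma+1$; then for $N$ large, $\P(X>N^{\tau/2}Y)\le N^{-\Gamma-1}$ and $\P(Y>N^{\tau/2}Z)\le N^{-\Gamma-1}$. On the complement of both exceptional events one has $X\le N^{\tau/2}Y\le N^{\tau}Z$, hence $\P(X>N^{\tau}Z)\le 2N^{-\Gamma-1}\le N^{-\Gamma}$ for $N\ge 2$; since $\tau,\Gamma$ were arbitrary this gives $X\prec Z$. Part (2) runs along the same lines: on the intersection of the good events $\{X\le N^{\tau/2}Y\}$ and $\{X'\le N^{\tau/2}Y'\}$, which fails with probability at most $2N^{-\Gamma-1}$, one has simultaneously $X+X'\le N^{\tau/2}(Y+Y')\le N^{\tau}(Y+Y')$ and $XX'\le N^{\tau}YY'$, yielding both claimed dominations.

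Part (3) is the only place where the two extra hypotheses enter; here $\E[X]\prec\E[Y]$ for the deterministic quantities means simply $\E[X]\le N^{\tau}\E[Y]$ for every $\tau>0$ and all large $N$. Fix $\tau>0$ and split $\E[X]=\E[X\one_{X\le N^{\tau/2}Y}]+\E[X\one_{X>N^{\tau/2}Y}]$. The first term is at most $N^{\tau/2}\E[Y]$. For the second, use the almost-sure bound $X\le N^{c_2}$ together with $X\prec Y$ applied with parameters $\tau/2$ and $\Gamma:=c_1+c_2+1$: for $N$ large, $\E[X\one_{X>N^{\tau/2}Y}]\le N^{c_2}\P(X>N^{\tau/2}Y)\le N^{c_2-c_1-c_2-1}=N^{-c_1-1}\le N^{-1}\E[Y]$, where the last inequality uses $\E[Y]\ge N^{-c_1}$. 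Adding the two pieces gives $\E[X]\le (N^{\tau/2}+N^{-1})\E[Y]\le N^{\tau}\E[Y]$ for $N$ sufficiently large, which is the assertion.

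There is no genuine obstacle in this lemma: its content is entirely the bookkeeping of the quantifiers in the definition of $\prec$. The only point requiring a little care is in part (3), where one must choose the decay exponent $\Gamma$ of the exceptional probability large enough (possible because $\Gamma$ is free) to overcome the deterministic a priori bound $N^{c_2}$ on $X$, and then invoke the a priori lower bound $N^{-c_1}$ on $\E[Y]$ so that the contribution of the exceptional event is negligible relative to $\E[Y]$ itself.
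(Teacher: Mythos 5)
Your proof is correct and follows the standard argument; the paper itself does not prove this lemma but cites Proposition 6.5 of Erd\H{o}s--Yau's book, where the same quantifier-bookkeeping (union bound on the exceptional events for parts (1)--(2), and the split of $\E[X]$ into good and exceptional pieces with $\Gamma$ chosen to dominate $c_1+c_2$ for part (3)) is used.
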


\subsection{Properties of the white Wishart ensemble}
Recall that a real $(\beta=1)$ or complex $(\beta=2)$ white Wishart matrix is a Gaussian sample covariance matrix $X^*X$ where the rescaled matrix entries $(\sqrt{N} X)_{ij}$,  $1\leq i\leq M, 1\leq j\leq N$ ($M \geq N$) are i.i.d real, respectively complex, standard Gaussian random variables. To distinguish the notations, we will use $W$ to indicate the Gaussian matrix and $W^*W$ to denote the white Wishart ensemble.

In the literature of random matrix theory, the rescaled white Wishart matrix $N W^*W$ is also called Laguerre orthogonal/unitary ensemble, since the joint density of the eigenvalues of $N W^*W$, denoted by $(\nu_j)_{j=1}^N$, is given by
\begin{align}\label{density}
p_{N, \alpha,\beta}(\nu_1, \ldots, \nu_N)=\frac{1}{Z_{N,\alpha,\beta}} \prod_{1 \leq j<k \leq N} |\nu_j-\nu_k|^{\beta} \prod_{j=1}^N \nu_j^{\frac{\alpha \beta}{2}} \ee^{-\frac{\beta \nu_j}{2}} \one_{0 \leq \nu_1 \leq \nu_2 \leq \cdots \leq \nu_N},
\end{align}
where $Z_{N,\alpha,\beta}$ is a normalizing constant, and $\alpha=M-N-1$, $\beta=1$ for the Laguerre orthogonal ensemble (LOE), respectively $\alpha=M-N$, $\beta=2$ for the Laguerre unitary ensemble (LUE).

Let $\{L_k^{\alpha}\}_{k=0}^{\infty}$ be the generalized Laguerre polynomials with parameter $\alpha \geq -1$ which are orthogonal on $\R^+$ with weight function $x^{\alpha} \ee^{-x}$, as defined in \cite{laguerre_poly}. Define the corresponding normalized functions by
$$\psi^{\alpha}_{k}(x) :=\sqrt{\frac{k!}{(k+\alpha)!}}x^{\frac{\alpha}{2}} \ee^{-\frac{x}{2}} L_k^{\alpha}(x), \qquad k\in \N, \quad \alpha \geq -1.$$
Then $\{\psi^{\alpha}_{k}\}_{k=0}^{\infty}$ form an orthonormal basis in $\mathrm{L}^2(\R^+)$.

The  eigenvalue process of the LUE ($\beta=2$) is well known to be a determinantal point process (see \cite{kurt_det, sasha_det}), whose $n$-point correlation function is given by
\begin{align}\label{determinant}
p^{(n)}_{N,2}(\nu_1, \cdots, \nu_n)=\det[ K_{N,2}(\nu_i,\nu_j)]_{1 \leq i,j \leq n},
\end{align}
with the correlation kernel 
\begin{align}\label{K_N2}
K_{N,2}(x,y)=\sum_{k=0}^{N-1} \psi^{\alpha}_{k}(x)\psi^{\alpha}_{k}(y).
\end{align}
From \cite{widom}, the correlation kernel in (\ref{K_N2}) has the following integral representation
\begin{align}\label{K_N2_int}
K_{N,2}(x,y)=\int_{0}^{\infty} \big(\phi_{N,1}(x+z) \phi_{N,2}(y+z) +\phi_{N,2}(x+z) \phi_{N,1}(y+z)\big) \dd z,
\end{align}
where
\begin{align}\label{phi_12}
&\phi_{N,1}(x):=(-1)^N \sqrt{\frac{\sqrt{N(N+\alpha)}}{2}} \psi^{\alpha-1}_N(x)x^{-1/2} \one_{x \geq 0},\nonumber\\
&\phi_{N,2}(x):=(-1)^{N-1} \sqrt{\frac{\sqrt{N(N+\alpha)}}{2}} \psi^{\alpha+1}_{N-1}(x)x^{-1/2} \one_{x \geq 0}.
\end{align}

In the orthogonal case, the eigenvalue process of the LOE ($\beta=1$) is a Pfaffian point process (see \cite{sasha_pf}), whose $n$-point correlation function is given by
\begin{align}\label{pfaffian}
p^{(n)}_{N,1}(\nu_1, \cdots, \nu_n)=\mathrm{pf}[ S_{N,1}(\nu_i,\nu_j)]_{1 \leq i,j \leq n},
\end{align}
with the antisymmetric $2 \times 2$ matrix kernel
\begin{align}\label{K_N1}
S_{N,1}(x,y)=\begin{pmatrix}
K_{N,1}(x,y) & -\frac{\partial }{\partial y}K_{N,1}(x,y)\\
(\varepsilon K_{N,1})(x,y)-\frac{1}{2} \mathrm{sgn}(x-y) & K_{N,1}(y,x)
\end{pmatrix}, \qquad \mbox{for even~} N,
\end{align}
where $\varepsilon$ denotes the convolution operator with kernel $\varepsilon(x,y)=\frac{1}{2} \mathrm{sgn}(x-y)$, and the correlation kernel function $K_{N,1}$ has the following integral representation \cite{widom,Ma}
\begin{align}\label{kernel_orthogonal}
K_{N,1}(x,y)
=&K_{N,2}(x,y)+ \frac{1}{2}\phi_{N,2}(x)(\mathrm{sgn} \star \phi_{N,1})(y),
\end{align}
with $K_{N,2}$ defined in (\ref{K_N2}) and $\phi_{N,1}$, $\phi_{N,2}$ given in (\ref{phi_12}). The case for odd $N$ is slightly more involved and discussed in \cite{odd}.

We recall the centering parameter $\wt \mu_N$ and scaling parameter $\wt \sigma_N$ in (\ref{some_parameter}) for the largest eigenvalue of the real white Wishart ensemble $W^*W$ and normalize the eigenvalues $(\nu_j)_{j=1}^N$ of the rescaled matrix $N W^*W$ in the edge scaling as below
\begin{align}\label{normalize}
\nu_j
=:\wt \mu_N +\wt \sigma_N  l_j.
\end{align}
The corresponding rescaled correlation kernel function for $(l_j)_{j=1}^N$ is then given by
\begin{align}\label{edge}
K^{\mathrm{edge}}_{N,\beta}(x,y):=\wt \sigma_N K_{N,\beta} \big(\wt \mu_N+\wt \sigma_N x, \wt \mu_N+ \wt \sigma_N y\big), \qquad \beta=1,2.
\end{align}
Using the asymptotic results of Laguerre polynomials, it was shown in \cite{peter, kurt_complex, johnstone} (with slightly different centering and scaling parameters) that 
\begin{align}\label{airy}
K^{\mathrm{edge}}_{N,2}(x,y) \rightarrow K_{\mathrm{Airy}}(x,y):= \frac{\Ai(x)\Ai'(y)-\Ai'(x)\Ai(y)}{x-y}\,, \qquad N \rightarrow \infty,
\end{align}
for any $x,y$ in an interval bounded from below, where $\Ai$ is the Airy function of first kind, which is the solution of $\Ai''(x)-x \Ai(x)=0\,,~ x \in \R\,,$
satisfying the boundary condition $\Ai(x) \rightarrow 0$ as $x\rightarrow \infty$. 
 As $x \rightarrow y$, the Airy kernel reduces to
\begin{align}\label{airy_xx}
K_{\mathrm{Airy}}(x,x):=(\Ai'(x))^2-\Ai''(x)\Ai(x)=(\Ai'(x))^2-x(\Ai(x))^2.
\end{align}
In the orthogonal $(\beta=1)$ case \cite{johnstone}, we have similarly
$$K^{\mathrm{edge}}_{N,1}(x,y) \rightarrow K_{\mathrm{Airy}}(x,y)+\frac{1}{2} \Ai(x) \int_{-\infty}^{y} \Ai(t) \dd t, \qquad N \rightarrow \infty,$$
for any $x,y$ in an interval bounded from below.

The following asymptotic results for the Laguerre polynomials are key ingredients in \cite{complex_convergence_rate, Ma} to prove the convergence rate for the fluctuations of the largest eigenvalue of white Wishart matrices.
\begin{lemma}
Recall the functions $\phi_{N,1}$ and $\phi_{N,2}$ defined in (\ref{phi_12}). For any fixed $L_0 \in \R$, there exists a constant $C \equiv C(L_0)>0$ such that for any $x \in [L_0,\infty]$,
\begin{align}\label{laguerre_edge_ortho}
\Big| \wt \sigma_N \phi_{N,1}(\wt \mu_N+\wt \sigma_N x)-\frac{\Ai(x)}{\sqrt{2}}\Big| \leq C N^{-1/3} \ee^{-x},\qquad \Big| \wt \sigma_N \phi_{1}( \wt \mu_N+\wt \sigma_N x)\Big| \leq C \ee^{-x};\nonumber\\
\Big| \wt \sigma_N \phi_{N,2}( \wt \mu_N+ \wt \sigma_N x)-\frac{\Ai(x)}{\sqrt{2}}\Big| \leq C N^{-2/3} \ee^{-x},\qquad \Big| \wt \sigma_N \phi_{1}(\wt \mu_N+\wt \sigma_N x)\Big| \leq C \ee^{-x},
\end{align}
for sufficiently large $N$, with $\wt \mu_N$ and $\wt \sigma_N$ given in (\ref{some_parameter}).
\end{lemma}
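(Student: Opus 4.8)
The plan is to derive the two displayed estimates from uniform Plancherel--Rotach asymptotics for the Laguerre functions $\psi_N^{\alpha-1}$ and $\psi_{N-1}^{\alpha+1}$ in the Airy regime near the upper (soft) spectral edge. This is in essence the content of Ma~\cite{Ma} for $\beta=1$ and of El Karoui~\cite{complex_convergence_rate} for $\beta=2$, so here I only outline the skeleton of the argument and indicate where the two rates $N^{-1/3}$ and $N^{-2/3}$ enter; for the precise remainder estimates one may quote those references together with the classical uniform asymptotics of Laguerre polynomials~\cite{laguerre_poly}.

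First I would fix $\xi \ge L_0$, set $y = \wt\mu_N + \wt\sigma_N \xi$, and start from the standard contour-integral representation of $L_k^a$, which yields $\psi_k^a(y) = \frac{1}{2\pi\ii}\oint \ee^{N\Phi(t)} A(t)\,\dd t$ with an explicit phase $\Phi$ and amplitude $A$ depending smoothly on $(k,a,y)$. With the centering $\wt\mu_N$ chosen as in~\eqref{some_parameter} --- which is precisely the finite-$N$-corrected location of the largest zero of the relevant Laguerre polynomial, rescaled by $\wt\sigma_N$ --- the two saddle points of $\Phi$ coalesce exactly at $\xi = 0$. I would then rescale $t$ by $N^{-1/3}$ about the coalescing saddle, pass to the cubic normal form, and read off $\Ai(\xi)$ as the leading term; the prefactor $(-1)^N\big(\sqrt{N(N+\alpha)}/2\big)^{1/2} y^{-1/2}$ together with the factor $\wt\sigma_N$ produces exactly the normalization $\Ai(\xi)/\sqrt2$ in the limit. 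For $\xi$ of order one and to the right of the edge I would deform the steepest-descent contour into the classically forbidden region; the saddle value then contributes $\ee^{-cN (\wt\sigma_N \xi/\wt\mu_N)^{3/2}} = \ee^{-c'\xi^{3/2}} \le C\ee^{-\xi}$ uniformly for $\xi \ge L_0$ and large $N$, which reproduces the exponential weight and matches the known super-exponential decay of $\Ai$. The first subleading correction in this expansion is $O(N^{-1/3})$ relative to the leading term, which gives the first bound on each line.

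The improved rate $N^{-2/3}$ for $\phi_{N,2}$ would come from the specific combination of index $N-1$ and parameter $\alpha+1$, together with the shifts $N \mapsto N_- = N-\tfrac12$ and $M \mapsto M_- = M-\tfrac12$ in~\eqref{some_parameter}: these are tuned (this is exactly the reason Johnstone and Ma work with $N-\tfrac12$ and $M-\tfrac12$) so that the first subleading coefficient in the Airy expansion of the rescaled $\psi_{N-1}^{\alpha+1}$ vanishes, leaving an $O(N^{-2/3})$ remainder. The bounds for the derivative-type quantities would follow by differentiating the uniform expansion in $\xi$ and using that $\Ai$ and $\Ai'$ obey the same exponential decay.

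The hard part is not the formal saddle-point computation but obtaining error estimates that are uniform over the entire half-line $[L_0,\infty)$ while simultaneously carrying the $\ee^{-\xi}$ decay: one has to splice the oscillatory Airy regime near $\xi = 0$ to the exponentially small regime $\xi \gg 1$ through a single uniform (Liouville--Green / Airy-type) asymptotic, with quantitatively controlled remainders on both sides and in the transition zone. This is precisely what the analyses in~\cite{Ma,complex_convergence_rate} supply, and I would invoke their estimates rather than reproving them.
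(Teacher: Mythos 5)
The paper does not prove this lemma at all: it is quoted directly from Ma~\cite{Ma} and El Karoui~\cite{complex_convergence_rate}, and your proposal ultimately does the same, deferring the uniform Airy-regime remainder estimates to those references after a plausible sketch of the underlying saddle-point analysis. Your outline (including the explanation of why the half-integer shifts in~\eqref{some_parameter} yield the improved $N^{-2/3}$ rate for $\phi_{N,2}$) is consistent with the cited sources, so this matches the paper's treatment.
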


Combining the above results with (\ref{K_N2_int}) and (\ref{kernel_orthogonal}), it is straightforward to check the following quantitative convergence rate for the edge kernels in (\ref{edge}) to the their deterministic limits. Similar estimates were also obtained in \cite{convergence_kernel} for general Laguerre ensembles with fixed $\alpha \in \N$.
\begin{proposition}\label{kernel_diff}
For any fixed $L_0\in \R$, there exists a constant $C \equiv C(L_0)>0$ such that for any $x,y \in [L_0, \infty)$,
\begin{align}\label{difference}
\Big|K^{\mathrm{edge}}_{N,2}(x,y)-K_{\mathrm{Airy}}(x,y)\Big| \leq CN^{-1/3} \ee^{-x}\ee^{-y},
\end{align}
and
\begin{align}\label{difference_real}
\Big|K^{\mathrm{edge}}_{N,1}(x,y)-K_{\mathrm{Airy}}(x,y)-\frac{1}{2} \Ai(x) \int_{-\infty}^{y} \Ai(t) \dd t\Big| \leq CN^{-1/3}\ee^{-x} \ee^{-y},
\end{align}
for sufficiently large $N$.
\end{proposition}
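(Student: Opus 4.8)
The plan is to derive Proposition~\ref{kernel_diff} directly from the integral representations~\eqref{K_N2_int} and~\eqref{kernel_orthogonal} together with the quantitative Laguerre asymptotics in~\eqref{laguerre_edge_ortho}. The key structural observation is that after the edge rescaling $x \mapsto \wt\mu_N + \wt\sigma_N x$, one has $K^{\mathrm{edge}}_{N,2}(x,y) = \int_0^\infty \big(\wt\Phi_{N,1}(x+t)\wt\Phi_{N,2}(y+t) + \wt\Phi_{N,2}(x+t)\wt\Phi_{N,1}(y+t)\big)\dd t$, where $\wt\Phi_{N,i}(x) := \wt\sigma_N^{3/2}\phi_{N,i}(\wt\mu_N + \wt\sigma_N x)$ are the rescaled functions (a change of variables $z = \wt\sigma_N t$ absorbs one extra factor of $\wt\sigma_N$). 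The Airy kernel admits the analogous representation $K_{\mathrm{Airy}}(x,y) = \int_0^\infty \tfrac12\big(\Ai(x+t)\Ai(y+t) + \Ai(x+t)\Ai(y+t)\big)\dd t = \int_0^\infty \Ai(x+t)\Ai(y+t)\dd t$. So I would write the difference as a sum of three integrals obtained by the usual telescoping $ab - a'b' = (a-a')b + a'(b - b')$, and bound each one using~\eqref{laguerre_edge_ortho}: the factor $|\wt\Phi_{N,1}(x+t) - \tfrac{1}{\sqrt2}\Ai(x+t)| \le CN^{-1/3}\ee^{-(x+t)}$ and $|\wt\Phi_{N,2}(x+t) - \tfrac{1}{\sqrt2}\Ai(x+t)| \le CN^{-2/3}\ee^{-(x+t)}$, while the untouched factors are controlled by $Ce^{-(y+t)}$ (either via the Airy decay $|\Ai(s)| \le Ce^{-s}$ for $s$ bounded below, or via the crude bound in~\eqref{laguerre_edge_ortho}). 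Carrying out the $t$-integration of $\ee^{-(x+t)}\ee^{-(y+t)} = \ee^{-x}\ee^{-y}\ee^{-2t}$ over $[0,\infty)$ produces the factor $\tfrac12$ and leaves exactly $\ee^{-x}\ee^{-y}$, and the worst rate among the three terms is $N^{-1/3}$, giving~\eqref{difference}.

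For~\eqref{difference_real}, I would use~\eqref{kernel_orthogonal}, so that $K^{\mathrm{edge}}_{N,1}(x,y) = K^{\mathrm{edge}}_{N,2}(x,y) + \tfrac12 \wt\Phi_{N,2}(x)\,\big(\mathrm{sgn}\star\wt\Phi_{N,1}\big)(y)$ after rescaling (again tracking the $\wt\sigma_N$ powers carefully, since $\mathrm{sgn}\star$ is a convolution in the rescaled variable). The first summand is handled by~\eqref{difference}. For the second, note that $\tfrac12(\mathrm{sgn}\star\Ai)(y) = \tfrac12\big(\int_{-\infty}^y \Ai(t)\dd t - \int_y^\infty \Ai(t)\dd t\big)$, and since $\int_{-\infty}^\infty \Ai = 1$ this equals $\int_{-\infty}^y \Ai(t)\dd t - \tfrac12$; combined with the $K_{N,2} \to K_{\mathrm{Airy}}$ correction one should recover precisely the claimed extra term $\tfrac12\Ai(x)\int_{-\infty}^y\Ai(t)\dd t$ (the constant $-\tfrac12$'s and the pieces of the $\mathrm{sgn}$ convolution must be bookkept against the exact form of~\eqref{kernel_orthogonal} and the known limit stated just before Proposition~\ref{kernel_diff}). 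The error is then $\tfrac12\big(\wt\Phi_{N,2}(x) - \tfrac{1}{\sqrt2}\Ai(x)\big)(\mathrm{sgn}\star\wt\Phi_{N,1})(y) + \tfrac12\cdot\tfrac{1}{\sqrt2}\Ai(x)\,\mathrm{sgn}\star\big(\wt\Phi_{N,1} - \tfrac{1}{\sqrt2}\Ai\big)(y)$ up to normalization; the first piece is $O(N^{-2/3}\ee^{-x})$ times a bounded function of $y$ decaying like $\ee^{-y}$, and the second is $O(N^{-1/3})\ee^{-x}\ee^{-y}$ — here one uses that $|\mathrm{sgn}\star g(y)| \le \int |g| \le CN^{-1/3}\int \ee^{-t}\dd t$ over a half-line bounded below, which is $O(N^{-1/3}\ee^{-y})$ if we are slightly careful, or at worst $O(N^{-1/3})$ uniformly, which still suffices after absorbing a harmless constant. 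The dominant rate is again $N^{-1/3}$.

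The main obstacle, and the place that needs genuine care rather than routine estimation, is the bookkeeping of the powers of $\wt\sigma_N$ (which is of order $N^{1/3}$) through the change of variables in the convolution integrals, and matching the various $\pm\tfrac12$ constants coming from $\mathrm{sgn}$, from $\int\Ai = 1$, and from the definition~\eqref{kernel_orthogonal} so that the stated limiting expression $K_{\mathrm{Airy}}(x,y) + \tfrac12\Ai(x)\int_{-\infty}^y\Ai(t)\dd t$ emerges with exactly the right coefficient and no leftover constant. A secondary technical point is justifying that the tail of the $t$-integral is negligible uniformly in $N$: for this I would split $\int_0^\infty = \int_0^{T\log N} + \int_{T\log N}^\infty$, use the quantitative bound on the bulk piece and the exponential-decay bounds (valid for arguments bounded below, which holds once $x + t \ge L_0$) on the tail, choosing $T$ large enough that the tail is $O(N^{-1})$. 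Everything else is a direct application of~\eqref{laguerre_edge_ortho} and the triangle inequality, so I would keep the write-up short.
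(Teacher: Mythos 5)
Your proposal follows exactly the route the paper indicates for this proposition --- the paper gives no written proof beyond the remark that one combines the Laguerre asymptotics \eqref{laguerre_edge_ortho} with the integral representations \eqref{K_N2_int} and \eqref{kernel_orthogonal} --- and your telescoping argument for the unitary case is correct. Two caveats. First, the rescaled functions should be $\wt\Phi_{N,i}(x):=\wt\sigma_N\,\phi_{N,i}(\wt\mu_N+\wt\sigma_N x)$, not $\wt\sigma_N^{3/2}\phi_{N,i}$: the prefactor $\wt\sigma_N$ in \eqref{edge} together with the Jacobian of $z=\wt\sigma_N t$ yields $\wt\sigma_N^2$ in total, i.e.\ one power per factor of $\phi$; this is the normalization for which \eqref{laguerre_edge_ortho} is stated and the one you in fact use in the subsequent estimates, so the slip is confined to the displayed definition (as written, your claimed identity for $K^{\mathrm{edge}}_{N,2}$ is off by a factor $\wt\sigma_N\sim N^{1/3}$). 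Second, in the orthogonal case $(\mathrm{sgn}\star\wt\Phi_{N,1})(y)$ involves $\wt\Phi_{N,1}$ on all of $\R$, whereas \eqref{laguerre_edge_ortho} controls it only on half-lines $[L_0,\infty)$ with $L_0$ fixed; comparing $\int_{-\infty}^{L_0}\wt\Phi_{N,1}$ with $\tfrac{1}{\sqrt2}\int_{-\infty}^{L_0}\Ai$ requires additional bulk bounds on the Laguerre functions (available in \cite{Ma}, \cite{complex_convergence_rate}) that neither you nor the paper supplies. This, together with the constant bookkeeping you already flag --- note that naively $(\mathrm{sgn}\star\tfrac{1}{\sqrt2}\Ai)(y)=\tfrac{1}{\sqrt2}\big(2\int_{-\infty}^{y}\Ai(t)\,\dd t-1\big)$, so one must verify against the exact normalizations behind \eqref{kernel_orthogonal} that no leftover multiple of $\Ai(x)$ survives --- is the only non-routine part of the argument, and your write-up correctly identifies it as such without fully closing it.
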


Finally, we recall some basic properties of the Airy function and Airy Kernel; see \cite{AGZ} for a reference.
\begin{lemma}\label{lemma_airy_bound}
The Airy function and Airy kernel in (\ref{airy}) have the following integral representations:
\begin{align}
\Ai(x)=\frac{1}{\pi} \int_0^{\infty} \cos(ty+\frac{y^3}{3}) \dd y; \qquad K_{\mathrm{Airy}}(x,y)=\int_0^{\infty} \Ai(x+z) \Ai(y+z) \dd z.\nonumber
\end{align}
For any fixed $L_0\in\R$, there exists a constant $C \equiv C(L_0)>0$, such that for any $x,y \in [ L_0, +\infty)$,
\begin{align}
\Big|  K_{\mathrm{Airy}}(x,y)\Big| \leq C\,, \qquad \Big|  \Ai(x) \int_{-\infty}^{y} \Ai(t) \dd t\Big|  \leq C.\nonumber
\end{align}
\end{lemma}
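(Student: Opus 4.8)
\emph{Proof strategy.} The statement collects classical facts about the Airy function and the Airy kernel; the plan is to obtain the two integral representations first and then read both bounds off of them. For the representation of $\Ai$ I would invoke the standard theory (see \cite{AGZ}): the regularized oscillatory integral $a(x)\deq\frac1\pi\int_0^\infty\cos\bigl(xy+\tfrac{y^3}{3}\bigr)\,\dd y$ is well defined, solves the Airy equation $a''(x)=x\,a(x)$ (differentiating under the integral sign after a regularization, using $\tfrac{\dd}{\dd y}\sin(xy+\tfrac{y^3}{3})=(x+y^2)\cos(xy+\tfrac{y^3}{3})$), and a saddle-point analysis gives $a(x)\to 0$ as $x\to+\infty$ together with the quantitative decay $|a(x)|\le C\ee^{-\frac23 x^{3/2}}$ for $x\ge 0$; hence $a=\Ai$ by the characterization of $\Ai$ recalled after \eqref{airy}. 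From this, $\Ai$ is continuous on $\R$ and bounded (it decays at $+\infty$ and oscillates with decaying amplitude at $-\infty$), and by the super-exponential decay at $+\infty$ it belongs to $L^1(L_0,\infty)\cap L^2(L_0,\infty)$ for every fixed $L_0\in\R$; I would also use the classical normalization $\int_{\R}\Ai(t)\,\dd t=1$ (alternatively one integration by parts makes $\int_{-\infty}^y\Ai$ absolutely convergent).

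\emph{Integral representation of the kernel.} Set $F(x,y)\deq\int_0^\infty\Ai(x+z)\Ai(y+z)\,\dd z$, which converges absolutely for all $x,y\in\R$ by the decay of $\Ai$ at $+\infty$. Using the Airy equation one has the telescoping identity
\[
\frac{\dd}{\dd z}\Bigl(\Ai'(x+z)\Ai(y+z)-\Ai(x+z)\Ai'(y+z)\Bigr)=\bigl((x+z)-(y+z)\bigr)\Ai(x+z)\Ai(y+z)=(x-y)\,\Ai(x+z)\Ai(y+z).
\]
Integrating over $z\in[0,\infty)$ and using that $\Ai$ and $\Ai'$ vanish at $+\infty$, the boundary contribution comes only from $z=0$ and yields $(x-y)F(x,y)=\Ai(x)\Ai'(y)-\Ai'(x)\Ai(y)$, that is $F(x,y)=K_{\mathrm{Airy}}(x,y)$ for $x\ne y$; by continuity of both sides this extends to $x=y$, consistently with \eqref{airy_xx}. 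This establishes $K_{\mathrm{Airy}}(x,y)=\int_0^\infty\Ai(x+z)\Ai(y+z)\,\dd z$ (and the stated representation of $\Ai$ was recalled in the previous paragraph).

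\emph{The two bounds.} Fix $L_0\in\R$ and let $x,y\ge L_0$. Applying Cauchy--Schwarz in the kernel representation just proved and then substituting $t=x+z$, $t=y+z$,
\[
\bigl|K_{\mathrm{Airy}}(x,y)\bigr|\le\Bigl(\int_0^\infty\Ai(x+z)^2\,\dd z\Bigr)^{1/2}\Bigl(\int_0^\infty\Ai(y+z)^2\,\dd z\Bigr)^{1/2}=\Bigl(\int_x^\infty\Ai(t)^2\,\dd t\Bigr)^{1/2}\Bigl(\int_y^\infty\Ai(t)^2\,\dd t\Bigr)^{1/2}\le\int_{L_0}^\infty\Ai(t)^2\,\dd t,
\]
a finite constant depending only on $L_0$ (finiteness by continuity of $\Ai$ on $[L_0,0]$ and its super-exponential decay on $[0,\infty)$). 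For the second quantity, $\int_{-\infty}^y\Ai(t)\,\dd t=1-\int_y^\infty\Ai(t)\,\dd t$, and for $y\ge L_0$ one has $|\int_y^\infty\Ai(t)\,\dd t|\le\int_{L_0}^\infty|\Ai(t)|\,\dd t<\infty$, hence $|\int_{-\infty}^y\Ai(t)\,\dd t|\le 1+\int_{L_0}^\infty|\Ai(t)|\,\dd t$. Combined with $\sup_{t\ge L_0}|\Ai(t)|<\infty$, this yields $|\Ai(x)\int_{-\infty}^y\Ai(t)\,\dd t|\le\bigl(\sup_{t\ge L_0}|\Ai(t)|\bigr)\bigl(1+\int_{L_0}^\infty|\Ai(t)|\,\dd t\bigr)$, again depending only on $L_0$, which is the claim.

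\emph{Main difficulty.} There is no substantial obstacle: the only points that require genuine care are the rigorous treatment of the oscillatory integral defining $\Ai$ (its convergence, the legitimacy of differentiation under the integral sign after regularization, the saddle-point decay bound $|\Ai(t)|\le C\ee^{-\frac23 t^{3/2}}$ for $t\ge 0$, and the value $\int_{\R}\Ai=1$), all of which are classical and may be quoted from \cite{AGZ}. Everything specific to the lemma reduces to the telescoping identity from the Airy ODE and two elementary inequalities (Cauchy--Schwarz and the triangle inequality), using only continuity and the super-exponential decay of $\Ai$.
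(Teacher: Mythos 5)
Your proof is correct. The paper itself offers no argument for this lemma; it simply records these classical properties of the Airy function and Airy kernel and cites \cite{AGZ} for them, so there is no paper proof to compare against. Your derivation of the kernel representation via the telescoping identity from the Airy ODE, the Cauchy--Schwarz estimate $|K_{\mathrm{Airy}}(x,y)|\le\int_{L_0}^\infty\Ai(t)^2\,\dd t$, and the treatment of $\int_{-\infty}^y\Ai$ via the normalization $\int_\R\Ai=1$ together with the super-exponential decay of $\Ai$ on $[L_0,\infty)$ are all standard and complete; the only ingredients that are genuinely quoted (convergence of the oscillatory integral, the decay bound, the total-mass identity) are exactly the ones the paper would also delegate to \cite{AGZ}.
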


\subsection{Cumulant expansion formula}

A key tool of this paper is the following cumulant expansion formula, see \eg Lemma 3.1 in \cite{moment} for reference and Lemma 7.1 in there for the complex version.
\begin{lemma}\label{cumulant}
	Let $h$ be a real-valued random variable with finite moments. The $p$-th cumulant of $h$ is given by
	\begin{align}\label{cumulant_k}
	c^{(p)}(h):=(-\ii)^{p} \Big(\frac{\dd}{\dd t} \log \E \ee^{\ii t h} \Big)\Big|_{t=0}.
	\end{align}
	Let $f: \R \longrightarrow \C$ be a smooth function which has bounded derivatives and denote by $f^{(p)}$ its $p$-th derivative. Then for any fixed $l \in \N$, we have
	\begin{align}\label{le first cumulant formula_real}
	\E \big[h f(h)\big]=\sum_{p+1=1}^l \frac{1}{p!} c^{(p+1)}(h)\E[ f^{(p)}(h) ]+R_{l+1}\,,
	\end{align}
	where the error term satisfies
	\begin{align}\label{cumulant_error_real}
	|R_{l+1}| \leq C_l \E \big[ |h|^{l+1}\big] \sup_{|x| \leq M} |f^{(l)}(x)| +C_l \E \big[ |h|^{l+2} 1_{|h|>M}\big] \sup_{x \in \R} |f^{(l)}(x)|,
	\end{align}
	and $M>0$ is an arbitrary fixed cutoff.
\end{lemma}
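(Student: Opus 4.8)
The plan is to regard both sides of~\eqref{le first cumulant formula_real} as linear functionals of $f$, to show that their difference annihilates every polynomial of degree at most $l-1$, and then to estimate the difference on the Taylor remainder of $f$.

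\emph{Step 1 (reduction to the moment--cumulant recursion).} First I would set
\[
\Phi(f):=\E\big[hf(h)\big]-\sum_{p=0}^{l-1}\frac{1}{p!}c^{(p+1)}(h)\,\E\big[f^{(p)}(h)\big],
\]
so that $R_{l+1}=\Phi(f)$ and $\Phi$ is linear in $f$. Evaluating on $f(x)=x^{q}$ with $q\in\llbracket 0,l-1\rrbracket$ gives $\Phi(x^{q})=\E[h^{q+1}]-\sum_{p=0}^{q}\binom{q}{p}c^{(p+1)}(h)\,\E[h^{q-p}]$, which vanishes by the classical moment--cumulant recursion. The latter I would derive as follows: since all moments of $h$ are finite, the characteristic function $\chi(t):=\E[\ee^{\ii th}]$ is of class $C^{\infty}$ with $\chi(0)=1$, hence $\log\chi$ is smooth near $0$ and satisfies $\chi'(t)=\chi(t)(\log\chi)'(t)$; comparing Taylor coefficients at $t=0$, using $\chi^{(n)}(0)=\ii^{n}\E[h^{n}]$ and $(\log\chi)^{(n)}(0)=\ii^{n}c^{(n)}(h)$ (which is exactly the definition~\eqref{cumulant_k}), one obtains $\E[h^{q+1}]=\sum_{p=0}^{q}\binom{q}{p}c^{(p+1)}(h)\,\E[h^{q-p}]$ for all $q\ge 0$. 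Consequently $\Phi$ annihilates every polynomial of degree at most $l-1$.

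\emph{Step 2 (Taylor reduction and remainder estimate).} Let $P$ be the Taylor polynomial of $f$ at $0$ of degree $l-1$; since $\deg P\le l-1$, Step~1 gives $R_{l+1}=\Phi(f)=\Phi(f-P)$. Writing $g:=f-P$, the integral form of Taylor's theorem yields the pointwise bounds $|g(x)|\le\frac{|x|^{l}}{l!}\sup_{|s|\le|x|}|f^{(l)}(s)|$ and $|g^{(p)}(x)|\le\frac{|x|^{l-p}}{(l-p)!}\sup_{|s|\le|x|}|f^{(l)}(s)|$ for $p\in\llbracket 0,l-1\rrbracket$ (using $(f^{(p)})^{(l-p)}=f^{(l)}$). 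Hence $|h\,g(h)|\le\frac{|h|^{l+1}}{l!}\sup_{|s|\le|h|}|f^{(l)}(s)|$ and $|c^{(p+1)}(h)g^{(p)}(h)|\le\frac{|c^{(p+1)}(h)|}{(l-p)!}|h|^{l-p}\sup_{|s|\le|h|}|f^{(l)}(s)|$; I would then take expectations and split each over $\{|h|\le M\}$ and $\{|h|>M\}$, bounding $\sup_{|s|\le|h|}|f^{(l)}(s)|$ by $\sup_{|x|\le M}|f^{(l)}(x)|$ on the first event and by $\sup_{x\in\R}|f^{(l)}(x)|$ on the second, and using $|h|^{l-p}\le|h|^{l+1}\le|h|^{l+2}$ on $\{|h|>M\}$ (taking $M\ge 1$ without loss of generality, which only shrinks the bound). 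Finally, to convert the prefactors $|c^{(p+1)}(h)|\,\E[|h|^{l-p}]$ into a multiple of $\E[|h|^{l+1}]$, I would use that $c^{(p+1)}(h)$ is a fixed polynomial in the moments $\E[h^{k}]$, $1\le k\le p+1$, in which every monomial has total degree $p+1$, so that by Jensen's inequality $|c^{(p+1)}(h)|\le C_{l}(\E[|h|^{l+1}])^{(p+1)/(l+1)}$ and hence $|c^{(p+1)}(h)|\,\E[|h|^{l-p}]\le C_{l}\E[|h|^{l+1}]$. Summing the $l$ contributions produces a bound of the form~\eqref{cumulant_error_real}.

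\emph{Expected main obstacle.} There is no conceptual difficulty here: the content is the algebraic identity of Step~1 together with the bookkeeping of Step~2. The one point genuinely requiring care is that $f$ is assumed only to have \emph{bounded} (not decaying or integrable) derivatives, so one cannot simply pass to the Fourier transform of $f$ and differentiate $\chi$ directly; this is precisely why the free cutoff $M$ enters, separating the moderate range of $h$, where $\sup_{|x|\le M}|f^{(l)}|$ suffices, from the tail, where only $\sup_{\R}|f^{(l)}|$ is available and must be compensated by the higher moment $\E[|h|^{l+2}\one_{\{|h|>M\}}]$.
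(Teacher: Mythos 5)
The paper does not prove this lemma; it is quoted verbatim from the literature (Lemma 3.1 in \cite{moment}, which in turn goes back to \cite{KKP,LP}), so there is no internal proof to compare against. Your strategy is exactly the standard one behind those references: exactness on polynomials via the moment--cumulant recursion, followed by a Taylor-remainder estimate. Step 1 is complete and correct (including the point that finiteness of all moments makes $\chi$ smooth with $\chi(0)=1$, so $\log\chi$ is smooth near $0$ and the Leibniz identity $\chi^{(q+1)}(0)=\sum_{p=0}^{q}\binom{q}{p}(\log\chi)^{(p+1)}(0)\chi^{(q-p)}(0)$ yields the recursion), and the reduction $R_{l+1}=\Phi(f-P)$ together with the pointwise Taylor bounds on $g=f-P$ is the right skeleton for Step 2.

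There is, however, a genuine loose end in the final bookkeeping of Step 2. After splitting on $\{|h|>M\}$, the cumulant-weighted terms contribute $\frac{|c^{(p+1)}(h)|}{p!\,(l-p)!}\,\E\big[|h|^{l-p}\one_{|h|>M}\big]\sup_{\R}|f^{(l)}|$, and neither of your two devices reduces this to $C_l\,\E[|h|^{l+2}\one_{|h|>M}]\sup_\R|f^{(l)}|$ with $C_l$ depending only on $l$: the H\"older/Jensen step $|c^{(p+1)}(h)|\,\E[|h|^{l-p}]\le C_l\,\E[|h|^{l+1}]$ is applied to full moments and therefore only serves the $\{|h|\le M\}$ part (where it correctly lands in the first error term), while on the tail event the prefactor $|c^{(p+1)}(h)|$ --- a moment-dependent quantity, not a universal constant --- survives (what one actually gets is $C_l\,|c^{(p+1)}(h)|\,M^{-(p+2)}\,\E[|h|^{l+2}\one_{|h|>M}]$). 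Relatedly, ``taking $M\ge 1$ without loss of generality'' is not a loss-free reduction: enlarging $M$ shrinks the second error term but \emph{enlarges} the first (the supremum $\sup_{|x|\le M}|f^{(l)}|$ grows), and in this paper the lemma is applied with $h=h_{\alpha b}(t)\sim N^{-1/2}$ and a cutoff $M\ll 1$, precisely the regime your reduction discards. None of this is fatal --- the bound your argument genuinely delivers, namely \eqref{cumulant_error_real} with the second term multiplied by $1+\max_{p\le l}|c^{(p+1)}(h)|M^{-(p+2)}$ (or with $|h|^{l+1}$ in place of $|h|^{l+2}$), is just as usable everywhere the paper invokes the lemma, since the indicator makes that term negligible --- but as written your proposal does not establish \eqref{cumulant_error_real} verbatim, and you should either track these factors explicitly or weaken the claimed remainder bound accordingly.
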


\section{Proof of Theorem \ref{kol_dist}}\label{sec:main result}

Before we begin with the proof of Theorem \ref{kol_dist}, we first establish the link between the distribution of the rescaled largest eigenvalue of $X^*X$ and the normalized trace of the resolvent in (\ref{m_sample}), following the approach in \cite{rigidity} to prove the edge universality for Wigner matrices.

Fix a small $\epsilon>0$ and introduce a truncation energy for the largest eigenvalue as (see (\ref{rigidity1}))
\begin{equation}\label{E_L}
E_L:=E_++4N^{-2/3+\epsilon},
\end{equation}
with the upper edge $E_+$ given in (\ref{endpoint}). For any $E \leq E_L$, we define the indicator function
\begin{equation}\label{le indicator chi}
\chi_{E}:=\one_{[E,E_L]}\,.
\end{equation}
The eigenvalue counting function for $X^*X$ defined in (\ref{number_particle}) is then written as $\mathcal{N}(E,E_L)=\Tr \chi_E(X^*X)$. 

For $\eta>0$, we define the mollifier $\theta_{\eta}$ by setting
\begin{equation}\label{le mollifier}
\theta_{\eta}(x):=\frac{\eta}{\pi(x^2+\eta^2)}=\frac{1}{\pi} \Im \frac{1}{x-\i \eta}.
\end{equation}
We can then relate $\Tr \chi_{E} \star \theta_{\eta}(X^*X)$ to the normalized trace of the resolvent of $X^*X$, $m_N$ in (\ref{m_sample}), as 
\begin{equation}\label{approx}
	\Tr \chi_{E} \star \theta_{\eta}(X^*X)=\frac{N}{\pi} \int \chi_E(y) \Im m_N(y+\i \eta) \dd y=\frac{N}{\pi} \int_{E}^{E_L} \Im m_N(y+\i \eta) \dd y\,.
\end{equation}

The following two lemmas assure that $\Tr \chi_E(X^*X)$ can be sufficiently well approximated by $\Tr \chi_E\star \theta_\eta(X^*X)$ for $\eta \ll N^{-2/3}$, and hence can be linked to the normalized trace $m_N$, in view of (\ref{approx}). These arguments were used first in \cite{rigidity} to prove the edge universality of Wigner matrices, where $\eta$ is chosen slightly smaller than the typical edge
eigenvalue spacing $N^{-2/3}$. In order to obtain a quantitative convergence rate, we aim to choose $\eta$ here much smaller with $\eta \gg N^{-1}$. Similar arguments were used in \cite{Bourgade extreme,wang_rate}. The proofs of the following two lemmas can be found in the Appendix of \cite{Schnelli+Xu}. 

\begin{lemma}\label{lemma2}
Let $E, l_1$ and $\eta$ be scale parameters satisfying $N^{-1}\ll\eta \ll l_1 \ll E_L-E  \leq C N^{-2/3+\epsilon}$ for some constant $C>0$. Then, for any $\Gamma>0$, 
\begin{equation}\label{approx1}
\Big|\Tr \chi_E(X^*X)- \Tr \chi_{E} \star \theta_{\eta}(X^*X)\Big| \leq C'\Big( \mathcal{N}(E-l_1,E+l_1)+\frac{\eta}{l_1} N^{2 \epsilon}\Big),
\end{equation}
holds with probability bigger than $1-N^{-\Gamma}$, for $N$ sufficiently large.
\end{lemma}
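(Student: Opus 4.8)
The plan is to compare $\Tr\chi_E(X^*X)$ with its mollified version $\Tr\chi_E\star\theta_\eta(X^*X)$ by controlling the error introduced by convolving the sharp indicator of $[E,E_L]$ with the Cauchy kernel $\theta_\eta$ at scale $N^{-1}\ll\eta\ll N^{-2/3}$. First I would write
\begin{equation}\nonumber
\Tr\chi_E(X^*X)-\Tr\chi_E\star\theta_\eta(X^*X)=\sum_{j}\Big(\chi_E(\lambda_j)-(\chi_E\star\theta_\eta)(\lambda_j)\Big),
\end{equation}
and split the sum according to the distance of $\lambda_j$ from the two endpoints $E$ and $E_L$ of the interval. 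Because $\theta_\eta$ has total mass one and decays like $\eta/x^2$, the function $\chi_E-\chi_E\star\theta_\eta$ is uniformly $O(1)$, is exponentially (in fact polynomially, $O(\eta/\mathrm{dist}^2)$) small at points whose distance to $\{E,E_L\}$ exceeds any power of $\eta$, and the only genuinely large contributions come from eigenvalues within distance $\sim l_1$ of the edges of the interval.

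The key steps, in order, are: (i) bound the contribution of eigenvalues within $l_1$ of the lower endpoint $E$ by the counting function $\mathcal{N}(E-l_1,E+l_1)$, simply because $|\chi_E-\chi_E\star\theta_\eta|\leq 1$ there; (ii) bound the contribution of eigenvalues within $l_1$ of the upper endpoint $E_L$ by noting that on the event of \eqref{rigidity1} there are at most $N^\tau$ eigenvalues above $E_+-C_2N^{-2/3}$ and none above $E_L$ is possible only up to $N^{-2/3+\epsilon}$-fluctuations, so this term is also absorbed into $\mathcal{N}(E-l_1,E+l_1)$-type quantities or is empty with the stated probability — here the precise bookkeeping uses that $E_L-E\leq CN^{-2/3+\epsilon}$ so the whole interval lies in the edge regime; (iii) for eigenvalues at distance between $l_1$ and, say, $N^{-1/3}$ from an endpoint, use the tail bound $|\chi_E-\chi_E\star\theta_\eta|(x)\lesssim \eta/\mathrm{dist}(x)^2$ together with the rigidity estimate \eqref{rigidity4} (or the counting bound \eqref{rigidity3}) to show the number of such eigenvalues times $\eta/l_1^2$ contributes at most $O(\eta N^{2\epsilon}/l_1)$, after summing the dyadic scales; (iv) for eigenvalues far from the interval (distance $\gtrsim 1$), the deterministic bound \eqref{deter_bound} and the fact that all eigenvalues are $O(1)$ by \eqref{rigidity1} make the contribution negligible. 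Summing (i)–(iv) yields \eqref{approx1} with the stated probability $1-N^{-\Gamma}$.

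The main obstacle I expect is step (iii): carefully summing the dyadic tail contributions near each endpoint while keeping the total error at the announced size $\frac{\eta}{l_1}N^{2\epsilon}$ rather than something larger like $\frac{\eta}{l_1}N^{C\epsilon}$ with an uncontrolled constant. This requires using the local rigidity \eqref{rigidity4} to say that the number of eigenvalues within distance $t$ of $E_+$ is $\prec N t^{3/2}\vee N^{2\epsilon}$ for $t\gtrsim N^{-2/3}$, plugging this into $\sum_{k}2^{-2k}l_1^{-2}\eta\cdot(\text{count at scale }2^k l_1)$, and checking that the geometric series is dominated by its first few terms, which contribute $O(\eta N^{2\epsilon}/l_1)$. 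Since \eqref{approx1} is only an auxiliary step toward the main theorem and the final error budget in Theorem~\ref{kol_dist} is $N^{-1/3+\omega}$, any $\epsilon$-loss here is harmless provided one fixes $\epsilon$ small relative to $\omega$; I would simply carry the constant $C'$ and the $N^{2\epsilon}$ factor explicitly and not attempt to optimize. The remaining steps are routine given the rigidity estimates already quoted in the excerpt.
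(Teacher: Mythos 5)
Your overall architecture (split the eigenvalues by their distance to the endpoints $E$ and $E_L$, use the trivial bound $|\chi_E-\chi_E\star\theta_\eta|\le 1$ within $l_1$ of $E$, use rigidity to kill the neighbourhood of $E_L$, and sum dyadic tails with the counting estimates) is the standard one; note the paper itself does not prove this lemma but defers to the appendix of \cite{Schnelli+Xu}, where the argument has exactly this shape. However, your key quantitative input in step (iii) is wrong, and with it the arithmetic does not close.

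The error function is not bounded by $\eta/\mathrm{dist}^2$. For $\lambda<E$ one has
\begin{equation*}
\chi_E\star\theta_\eta(\lambda)=\int_{E-\lambda}^{E_L-\lambda}\theta_\eta(y)\,\dd y\le \min\Big(\frac{C\eta}{d},\ \frac{C\eta\,(E_L-E)}{d^2}\Big),\qquad d:=E-\lambda,
\end{equation*}
and similarly $1-\chi_E\star\theta_\eta(\lambda)\le C\eta/d$ for $\lambda$ inside $(E+l_1,E_L-l_1)$; the bound $\eta/d^2$ you state is the value of the kernel, not the integrated tail, and is dimensionally inconsistent (it is not even $O(1)$). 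Concretely, at the first dyadic scale $d\sim l_1$ your step (iii) computes $(\text{count}\prec N^{2\epsilon})\times \eta/l_1^{2}=\eta N^{2\epsilon}/l_1^{2}$, which exceeds the target $\eta N^{2\epsilon}/l_1$ by a factor $1/l_1\gtrsim N^{2/3}$; the correct bound at those scales is $\eta/d$ per eigenvalue, and then the geometric series is indeed dominated by $d\sim l_1$ and gives $\eta N^{2\epsilon}/l_1$. Conversely, for the far scales $d\gtrsim E_L-E$ the count per dyadic window grows like $Nd^{3/2}$, so with the weight $\eta/d$ the sum $\sum_k \eta N (2^k l_1)^{1/2}$ is dominated by the \emph{largest} scale and gives $\sim\eta N$ (which is $N^{\epsilon}$ in the intended application, versus the target $N^{-\epsilon}$) — your claim that ``the geometric series is dominated by its first few terms'' fails there. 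One must switch to the bound $\eta(E_L-E)/d^2$ precisely at $d\sim E_L-E$; then that part of the sum is dominated by its smallest scale and yields $\eta N\sqrt{E_L-E}\le \frac{\eta}{l_1}N^{2\epsilon}$, where the hypothesis $l_1\ll E_L-E$ is exactly what is needed. Finally, two minor points: the citation of the Green-function bound \eqref{deter_bound} in step (iv) is irrelevant (all that is needed there is that there are $N$ eigenvalues in a bounded set, each contributing $\le C\eta(E_L-E)$), and near $E_L$ you should simply observe that by \eqref{rigidity1} with $\tau<\epsilon$ there are no eigenvalues above $E_L-l_1$ on an event of probability $1-N^{-\Gamma}$, since $E_L=E_++4N^{-2/3+\epsilon}$.
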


 Let $F\,:\,\R\longrightarrow\R$ be a smooth cut-off function such that
\begin{equation}\label{q_function}
F(x)=1, \quad \mbox{if} \quad |x| \leq 1/9; \qquad F(x)=0, \quad \mbox{if} \quad |x| \geq 2/9,
\end{equation}
and we assume that $F(x)$ is non-increasing for $x \geq 0$. Then one obtains the following estimates from Lemma \ref{lemma2} and the eigenvalue rigidity in (\ref{rigidity1}).
\begin{lemma}\label{lemma1}
Fix a small $\epsilon>0$. Set  $l_1=N^{3\epsilon} \eta$ and $l=N^{3\epsilon}l_1$ such that $N^{-1} \ll \eta \ll l_1 \ll l \ll E_L-E  \leq C N^{-2/3+\epsilon}$. Then for any $\Gamma>0$, we have
\begin{equation*}
 \Tr \chi_{E+l} \star \theta_{\eta}(X^*X) -N^{-\epsilon} \leq \mathcal{N}(E, \infty) \leq  \Tr \chi_{E-l} \star \theta_{\eta}(H) +N^{-\epsilon},
\end{equation*}
with probability bigger than $1-N^{-\Gamma}$, for $N$ sufficiently large. Furthermore, we have
\begin{equation}\label{approx2}
\E \Big[ F\Big( \Tr \chi_{E-l} \star \theta_{\eta}(X^*X) \Big)\Big]-N^{-\Gamma} \leq \P \Big( \mathcal{N}(E, \infty) =0\Big) \leq   \E \Big[F\Big( \Tr \chi_{E+l} \star \theta_{\eta}(X^*X) \Big)\Big]+N^{-\Gamma},
\end{equation}
where $F(x)$ is the cut-off function given in (\ref{q_function}).
\end{lemma}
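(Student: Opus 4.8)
The statement to prove is Lemma~\ref{lemma1}, which bounds the eigenvalue counting function $\mathcal{N}(E,\infty)$ above and below by smoothed counting functions $\Tr\chi_{E\mp l}\star\theta_\eta(X^*X)$ up to $N^{-\epsilon}$ errors, and then transfers this to the probability $\P(\mathcal{N}(E,\infty)=0)$ via the cut-off function $F$.

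\textbf{Proof proposal.}

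The plan is to deduce both chains of inequalities from Lemma~\ref{lemma2} together with the rigidity estimate~\eqref{rigidity1}, exploiting the monotonicity of the indicator functions $\chi_E$ in $E$ and the fact that $\chi_E\star\theta_\eta$ is an approximate identity. First I would prove the two-sided bound on $\mathcal{N}(E,\infty)$. Since $E_L = E_+ + 4N^{-2/3+\epsilon}$ and, by~\eqref{rigidity1}, all eigenvalues lie below $E_L$ with probability at least $1-N^{-\Gamma}$, we may replace $\mathcal{N}(E,\infty)$ by $\mathcal{N}(E,E_L) = \Tr\chi_E(X^*X)$ on this high-probability event. Now apply Lemma~\ref{lemma2} with the given scales $\eta \ll l_1 \ll l \ll E_L - E$: it gives
\begin{equation}\label{eq:step-apply}
\Big|\Tr\chi_{E\pm l}(X^*X) - \Tr\chi_{E\pm l}\star\theta_\eta(X^*X)\Big| \leq C'\Big(\mathcal{N}(E\pm l - l_1, E\pm l + l_1) + \tfrac{\eta}{l_1}N^{2\epsilon}\Big)
\end{equation}
with probability $\geq 1-N^{-\Gamma}$. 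Here $l_1 = N^{3\epsilon}\eta$ so $\tfrac{\eta}{l_1}N^{2\epsilon} = N^{-\epsilon}$, which already accounts for one piece of the allowed error. The interval $[E\pm l - l_1, E\pm l + l_1]$ has length $2l_1 = 2N^{3\epsilon}\eta$, far below the edge scale $N^{-2/3}$; by the second estimate in~\eqref{rigidity1} (applied near the shifted energies, which remain within $O(N^{-2/3+\epsilon})$ of $E_+$), the count $\mathcal{N}(E\pm l - l_1, E\pm l + l_1)$ is at most $N^{\tau}$ for any $\tau>0$ with probability $\geq 1-N^{-\Gamma}$; choosing $\tau$ small relative to $\epsilon$ (say $\tau = \epsilon/2$) this is $\leq \tfrac12 N^{-\epsilon}$ after absorbing it, or more simply bounded by $N^{-\epsilon}$ up to adjusting constants. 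Combining, $|\Tr\chi_{E\pm l}(X^*X) - \Tr\chi_{E\pm l}\star\theta_\eta(X^*X)| \leq N^{-\epsilon}$ on a high-probability event (shrinking $\epsilon$ slightly if needed to absorb the constant $C'$, which is harmless since $\epsilon$ is an arbitrary small parameter). Finally, monotonicity of $\chi$ gives the sandwich $\Tr\chi_{E+l}(X^*X) \leq \Tr\chi_E(X^*X) = \mathcal{N}(E,E_L) \leq \Tr\chi_{E-l}(X^*X)$ on $\{\lambda_N < E_L\}$, and inserting the smoothing bounds yields
\begin{equation}\label{eq:sandwich}
\Tr\chi_{E+l}\star\theta_\eta(X^*X) - N^{-\epsilon} \leq \mathcal{N}(E,\infty) \leq \Tr\chi_{E-l}\star\theta_\eta(X^*X) + N^{-\epsilon}
\end{equation}
with probability $\geq 1 - N^{-\Gamma}$, which is the first claim.

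For the second claim~\eqref{approx2}, I would use that $F$ is non-increasing on $[0,\infty)$ with $F\equiv 1$ on $[0,1/9]$ and $F\equiv 0$ on $[2/9,\infty)$, and that $\mathcal{N}(E,\infty)$ is integer-valued. On the high-probability event where~\eqref{eq:sandwich} holds: if $\mathcal{N}(E,\infty)=0$ then $\Tr\chi_{E+l}\star\theta_\eta(X^*X) \leq N^{-\epsilon} \leq 1/9$ for large $N$, hence $F(\Tr\chi_{E+l}\star\theta_\eta(X^*X)) = 1$; if $\mathcal{N}(E,\infty)\geq 1$ then $\Tr\chi_{E-l}\star\theta_\eta(X^*X) \geq 1 - N^{-\epsilon} \geq 2/9$ for large $N$, hence $F(\Tr\chi_{E-l}\star\theta_\eta(X^*X)) = 0$. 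Thus on this event $F(\Tr\chi_{E-l}\star\theta_\eta(X^*X)) \leq \one_{\mathcal{N}(E,\infty)=0} \leq F(\Tr\chi_{E+l}\star\theta_\eta(X^*X))$. Taking expectations and controlling the complementary event of probability $\leq N^{-\Gamma}$ (on which each term lies in $[0,1]$, since $0\leq F\leq 1$) gives
\[
\E\big[F(\Tr\chi_{E-l}\star\theta_\eta(X^*X))\big] - N^{-\Gamma} \leq \P(\mathcal{N}(E,\infty)=0) \leq \E\big[F(\Tr\chi_{E+l}\star\theta_\eta(X^*X))\big] + N^{-\Gamma},
\]
as desired (renaming $\Gamma$ if one wants the same exponent on both sides, which is fine since $\Gamma$ is arbitrary).

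\textbf{Main obstacle.} The only genuinely delicate point is bookkeeping the error scales so that everything collapses to a clean $N^{-\epsilon}$: one must check that $l_1 = N^{3\epsilon}\eta$ indeed makes $\tfrac{\eta}{l_1}N^{2\epsilon} = N^{-\epsilon}$, that the short intervals of length $2l_1$ around the shifted energies $E\pm l$ still lie in the regime $N^{-2/3+\epsilon}$-close to $E_+$ where the rigidity count $\mathcal{N}(\cdot,\cdot)\prec 1$ (equivalently $\leq N^{\tau}$) applies, and that the constant $C'$ from Lemma~\ref{lemma2} and the small polynomial factor $N^{\tau}$ from rigidity can be absorbed by decreasing $\epsilon$ — legitimate because $\epsilon$ is a free small parameter throughout. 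Everything else is the standard approximate-identity-plus-rigidity argument from~\cite{rigidity}, and the cited proofs in the Appendix of~\cite{Schnelli+Xu} handle the remaining details.
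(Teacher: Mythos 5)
Your overall architecture (reduce $\mathcal{N}(E,\infty)$ to $\mathcal{N}(E,E_L)=\Tr\chi_E(X^*X)$ via rigidity, compare sharp and mollified counts at the shifted energies $E\pm l$, then pass to \eqref{approx2} through the monotone cut-off $F$) is the standard one, and your derivation of \eqref{approx2} from the first display is fine. The genuine gap is in your treatment of the error term from Lemma~\ref{lemma2}: you claim that $\mathcal{N}(E\pm l-l_1,E\pm l+l_1)\le N^{\tau}$ can be ``absorbed'' into $N^{-\epsilon}$ by taking $\tau$ small. This cannot work: $N^{\tau}\ge 1$ while $N^{-\epsilon}\to 0$, and no choice of $\tau$ or adjustment of constants turns an $O_\prec(1)$ quantity into an $o(1)$ one. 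More concretely, $\mathcal{N}(E\pm l-l_1,E\pm l+l_1)$ is a nonnegative integer that equals at least $1$ whenever an eigenvalue lands in that window, and since the window sits exactly where $\lambda_N$ fluctuates, this happens with probability of order $N^{2/3}l_1$ — polynomially small at best, nowhere near $N^{-\Gamma}$ for all $\Gamma$. Indeed the two-sided estimate $|\Tr\chi_{E'}(X^*X)-\Tr\chi_{E'}\star\theta_\eta(X^*X)|\le N^{-\epsilon}$ that you deduce is false as a high-probability statement: a single eigenvalue within $O(\eta)$ of $E'$ already makes the sharp and mollified counts differ by about $1/2$.

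The shift by $\pm l\gg l_1\gg\eta$ is precisely the device that circumvents this, by making the errors one-sided rather than by letting you discard the local count. For the upper bound on $\mathcal{N}(E,\infty)$ one argues directly: on the rigidity event every eigenvalue in $[E,E_L]$ lies at distance at least $l$ from both endpoints of $[E-l,E_L]$, so each contributes at least $\tfrac{2}{\pi}\arctan(l/\eta)\ge 1-\tfrac{2}{\pi}N^{-6\epsilon}$ to $\Tr\chi_{E-l}\star\theta_\eta(X^*X)$; the total loss is then a \emph{multiplicative} $O(N^{-6\epsilon})$ per eigenvalue times $\mathcal{N}(E,E_L)\prec N^{3\epsilon/2}$ (from \eqref{rigidity3}; note $E$ may lie $CN^{-2/3+\epsilon}$ below $E_+$, so the count is $N^{3\epsilon/2}$ rather than $N^{\tau}$), giving $O_\prec(N^{-9\epsilon/2})\le N^{-\epsilon}$. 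For the reverse inequality one does invoke Lemma~\ref{lemma2} at $E+l$, but then uses that the window $[E+l-l_1,E+l+l_1]$ lies entirely inside $(E,E_L)$, so the term $C'\mathcal{N}(E+l-l_1,E+l+l_1)$ is controlled by the quantity $\mathcal{N}(E,\infty)$ one is comparing against (supplemented by a tail estimate for the eigenvalues below $E$); this is exactly the bookkeeping carried out in the Appendix of \cite{Schnelli+Xu}, to which the present paper defers. As written, your step replacing an $O_\prec(1)$ eigenvalue count by $N^{-\epsilon}$ does not close, and the argument needs to be restructured along these lines.
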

Hence, recalling~\eqref{approx}, we have established in (\ref{approx2}) the desired link between the distribution function of the rescaled largest eigenvalue of $X^*X$ and the normalized trace of the resolvent of $X^*X$ using a cleverly chosen observable from \cite{rigidity}. Theorem \ref{kol_dist} hence follows from the Green function comparison, Theorem \ref{green_comparison} below, where we compare this observable for any sample covariance matrix $X^*X$ with the corresponding white Wishart matrix $W^*W$. We use $\P^{W}$ and $\E^{W}$ to denote the probability and expectation with respect to the Gaussian matrix $W$.

\begin{theorem}[Green function comparison theorem at the upper edge $E_+$]\label{green_comparison}
Consider a random matrix $X$ satisfying the moment conditions in (\ref{sample_condition_1}), (\ref{sample_condition_2}), (\ref{moment_condition_sample}) and the aspect ration condition in (\ref{condition}).  Let $F$ be a smooth function with uniformly bounded derivatives. For any fixed small $\epsilon>0$ and fixed constants $C_1,C_2>0$, choose $\eta$, $\kappa_1$ and $\kappa_2$ such that $N^{-1+\epsilon} \leq \eta \leq N^{-2/3-\epsilon}$ and $-C_1N^{-2/3} \leq \kappa_1< \kappa_2 \leq C_2 N^{-2/3+\epsilon}$.  Then for any small $\tau>0$, 
\begin{align}\label{green_difference}
\Big| \big(\E-\E^{W}\big)  \Big[ F\Big( N \int_{\kappa_1}^{\kappa_2} \Im m_N( E_++x+\ii \eta) \dd x\Big) \Big]\Big| \leq  N^{-\frac{1}{3}+\tau},
\end{align}
for sufficiently large $N \geq N_0(C_1,C_2,\epsilon, \tau)$. The results hold true for both the real and complex case.
\end{theorem}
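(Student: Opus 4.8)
The plan is to establish \eqref{green_difference} via a \emph{continuous interpolation} between the given matrix $X$ and the Gaussian matrix $W$, rather than a Lindeberg-type entry-by-entry swap. Specifically, I would construct a matrix-valued flow $X_t$ (for instance $X_t = \sqrt{1-t}\,X + \sqrt{t}\,W$, or an Ornstein--Uhlenbeck-type flow $\dd X_t = -\tfrac12 X_t\,\dd t + \dd B_t$ with suitably coupled $B_t$) so that $X_0 = X$, $X_1 = W$, and the first two moments of each entry are preserved along the flow; let $H_t(z)$, $G_t(z)$ be the associated linearization and Green function from \eqref{ttt}, and $m_{N,t}(z) = \tfrac1N\Tr R_t(z)$ the normalized partial trace. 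Writing $\Phi(X) \deq F\big(N\intkappa \Im m_N(E_++x+\ii\eta)\,\dd x\big)$, the goal reduces to bounding $\int_0^1 \tfrac{\dd}{\dd t}\E[\Phi(X_t)]\,\dd t$. Differentiating in $t$ produces, by the chain rule, expressions of the form $\E\big[F'(\cdots)\cdot N\intkappa \Im \partial_t m_{N,t}\,\dd x\big]$, and $\partial_t G_t$ is a sum over entries $(i,\alpha)$ of terms $\partial_{X_{i\alpha}} G_t$ weighted by $\partial_t X_{i\alpha}$; applying the cumulant expansion formula of Lemma~\ref{cumulant} in each variable $X_{i\alpha}$ then organizes the $t$-derivative by the order of cumulants, with the first- and second-order contributions cancelling by the moment-matching of the flow, leaving the third- and fourth-order cumulant terms (schematically, the ``\eqref{step0}'' terms referenced in the introduction) plus a negligible higher-order remainder controlled by \eqref{moment_condition_sample}.

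The heart of the argument is then to show that these third- and fourth-order cumulant contributions are each $O_\prec(N^{-1/3+\tau})$ after integration in $t$. Here the naive bound from the local law \eqref{G_0}--\eqref{G_average} is insufficient, because the spectral scale $\eta$ can be as small as $N^{-1+\epsilon}$, much finer than the edge scale $N^{-2/3}$, so that $\Psi(z) \sim (N\eta)^{-1/2}$ is only barely small. The resolution, following the strategy the authors attribute to \cite{Schnelli+Xu,LS14b}, is an \emph{iterative expansion mechanism}: each cumulant term is an averaged product of entries of $G_t$ (and of $X_t$), and one repeatedly re-applies Lemma~\ref{cumulant} (together with the block/resolvent identities \eqref{hermitization_inverse} and the Ward identities \eqref{ward}) to expand such averaged products to arbitrarily high order in the small parameter $\Psi$, gaining a power of $\Psi$ (or of $1/(N\eta)$) at each step until the desired $N^{-1/3}$ threshold is crossed. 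A crucial structural observation — made precise in the paper's Definition~\ref{unmatch_def} (the notion of \emph{unmatched indices}) — is that every third-order term carries an unmatched index, so that expanding in that index produces no ``leading'' (deterministic, $\Pi$-type) contribution and the term is automatically smaller than the fourth-order terms; thus the third-order terms are entirely error. The fourth-order terms, by contrast, do \emph{not} vanish: after expansion they reduce to trace-like correlation functions of products of $\Im G_t$ (essentially $\tfrac1N\Tr$ of products of resolvents near the edge), weighted by the fourth cumulants $\kappa_4$ of the matrix entries, and these must be tracked rather than discarded.

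To close the argument one then needs to control the surviving fourth-order trace-like functionals \emph{for the flow $X_t$ itself}, and here I would set up a \emph{self-consistent / bootstrap} comparison: the same interpolation is applied recursively to these lower-complexity correlation functions, reducing the problem (up to admissible errors) to computing the analogous quantities for the white Wishart endpoint $W^*W$. At the Gaussian endpoint the eigenvalue process is the exactly solvable LOE/LUE, so $\Im m_{N}$ integrated against $F'$ can be evaluated through the Laguerre correlation kernel \eqref{K_N1}--\eqref{K_N2_int}, and the quantitative convergence of the rescaled edge kernel to the Airy kernel, Proposition~\ref{kernel_diff} (with its $N^{-1/3}$ rate), supplies exactly the final $N^{-1/3}$ error. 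Feeding everything back, $\int_0^1\tfrac{\dd}{\dd t}\E[\Phi(X_t)]\,\dd t$ is $O_\prec(N^{-1/3+\tau})$, which is \eqref{green_difference}. \textbf{The main obstacle} I anticipate is organizing the iterative expansion so that it genuinely terminates: because of the block structure of $G$ in \eqref{ttt}, differentiating in a single $X_{i\alpha}$ produces several different products of $R$-type, $\mathcal{R}$-type, and mixed $X\mathcal{R}$/$XR$-type entries, and one must verify that at \emph{every} stage of the recursion the number of ``off-diagonal'' (hence $\Psi$-small) factors strictly increases — equivalently, that no term can cycle back to something of the same size — and simultaneously keep the combinatorial bookkeeping of unmatched indices under control while tracking the explicit $\kappa_3,\kappa_4$ dependence through an arbitrarily deep expansion. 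Handling the Gaussian-side input uniformly for both $\beta=1$ and $\beta=2$, using \eqref{difference} and \eqref{difference_real} respectively, is a comparatively routine but necessary final ingredient.
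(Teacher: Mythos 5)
Your proposal follows essentially the same route as the paper: an interpolating flow between $X$ and $W$ with matched first and second moments, cumulant expansion of the $t$-derivative of $\E[F(\mathcal{X}(t))]$, dismissal of the third-order terms via the unmatched-index mechanism, iterative expansion of the surviving fourth-order terms into trace-like (type-0) quantities, and a recursive comparison of those quantities to the white Wishart endpoint where the Laguerre-kernel asymptotics of Proposition~\ref{kernel_diff} deliver the $N^{-1/3}$ rate. The obstacle you flag (termination of the expansion despite the block structure) is exactly what the paper resolves by tracking the parity and multiplicity $\nn^o$ of the expanded index, which strictly decreases along the leading terms.
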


Admitting Theorem \ref{green_comparison}, we are ready to prove the quantitative Tracy--Widom laws in Theorem \ref{kol_dist}. Similar arguments for Wigner matrices can be found in \cite{Bourgade extreme, Schnelli+Xu}. We will only consider the real sample covariance matrices ($\beta=1$), the complex case $(\beta=2)$ can be handled similarly.

\begin{proof}[Proof of Theorem \ref{kol_dist}]

Recall the aspect ratio in (\ref{condition}) and the centering and scaling parameters $\mu_N$, $\sigma_N$ in (\ref{old_parameter}). Set then
\begin{align}\label{some_new_parameter}
E_+=(1+\sqrt{\varrho})^2 \sim 1, \qquad \gamma_+=\frac{\varrho^{1/6}}{(1+\sqrt{\varrho})^{4/3}} \sim 1.
\end{align}
In order to study the distribution of the centered and scaled largest eigenvalue $\sigma^{-1}_N(N \lambda_N- \mu_N)$, it is equivalent to study the distribution of $ \gamma_+ N^{2/3}\big(\lambda_N-E_+)$.

Using the rigidity of the eigenvalues in (\ref{rigidity1}),  one easily verifies that, for any fixed small $\epsilon>0$ and $\Gamma>2/3$,
\begin{equation}\label{upper}
\sup_{r > \gamma_+ N^{\epsilon}} \Big|\P\Big(\gamma_+ N^{2/3} (\lambda_N- E_+ )<r \Big)-\P^{W}\Big(\gamma_+N^{2/3} (\lambda_N-E_+ )<r\Big) \Big| \leq  N^{-\Gamma},
\end{equation}
for sufficiently large $N$. Hence it suffices to focus on the regime $r_0 \leq r \leq \gamma_+ N^{\epsilon}$. 

For any $r_0 \leq r \leq \gamma_+ N^{\epsilon}$, let, as in (\ref{E_L}),
$$E=E_++\gamma_+^{-1}N^{-2/3} r, \qquad \mbox{and } \quad E_{L}=E_++4N^{-2/3+\epsilon}.$$ 
Set $\eta=N^{-1+\epsilon}$ and $l=N^{-1+ 7\epsilon}$ as in Lemma \ref{lemma1}, where we choose $0<\epsilon<\frac{1}{21}$ such that $l \ll N^{-2/3}$. From (\ref{approx}) and (\ref{approx2}), we can relate the distribution of the rescaled largest eigenvalue $\gamma_+ N^{2/3}\big(\lambda_N-E_+)$ to the normalized trace of the Green function as follows,
\begin{align}\label{im_identity}
\E \Big[ F\Big( N \int_{\gamma_+^{-1} N^{-2/3}r-l}^{4N^{-2/3+\epsilon}}  \Im & m_N(E_++x+\ii \eta) \dd x\Big) \Big]-N^{-\Gamma} \leq \P\Big( \gamma_+ N^{2/3} (\lambda_N- E_+ )<r \Big)\nonumber\\
 \leq    & \E \Big[ F\Big( N \int_{\gamma_+^{-1} N^{-2/3}r+ l}^{4N^{-2/3+\epsilon}} \Im m_N(E_++x+\ii \eta) \dd x\Big)\Big]+N^{-\Gamma}.
\end{align}
Shifting the value of $r$ in the second inequality of (\ref{im_identity}) and combining with the first one, we obtain
\begin{align}
 \P\Big( \gamma_+ N^{2/3} (\lambda_N- E_+ )<r -2\gamma_+ N^{2/3} l\Big) -N^{-\Gamma} \leq &\E \Big[ F\Big( N \int_{\gamma_+^{-1} N^{-2/3}r-l}^{4N^{-2/3+\epsilon}} \Im m_N(E_++x+\ii \eta) \dd x\Big) \Big]\nonumber\\
  \leq &\P\Big( \gamma_+ N^{2/3} (\lambda_N- E_+ )<r \Big)+N^{-\Gamma}.
\end{align}

Similar bounds can be obtained if we replace $-l$ in the lower integral domain with $+l$. We remark that the above inequalities hold true for any sample covariance matrices, including the white Wishart ensemble. Using Theorem \ref{convergence_gaussian} for the real white Wishart matrices where the centering and scaling parameters satisfy $\frac{\wt \mu_N}{N}=E_++O(N^{-1})$ and $\frac{N}{\wt \sigma_N}=\gamma_+ N^{2/3}+N^{-1/3}$, and that the Tracy--Widom laws have smooth and uniformly bounded density, we find
\begin{align}\label{rhs_gue}
\sup_{r_0 \leq r \leq \gamma_+ N^{\epsilon}}\Big| \E^{W} \Big[ F\Big( N \int_{ \gamma_+^{-1}N^{-2/3}r \pm l}^{4N^{-2/3+\epsilon}} \Im m_N(E_++x+\ii \eta) \dd x\Big) \Big] -\mathrm{TW}_1(r) \Big| =O(N^{-1/3+7\epsilon}).
\end{align}
Choosing $0<\tau<\epsilon$ in the Green function comparison (\ref{green_difference}) in Theorem \ref{green_comparison}, we have
\begin{align}\label{rhs_wigner}
\sup_{r_0 \leq r\leq \gamma_+ N^{\epsilon}}\Big| \Big(\E- \E^{W} \Big)\Big[ F\Big( N \int_{ \gamma_+^{-1} N^{-2/3}r \pm l}^{4N^{-2/3+\epsilon}} \Im m_N(E_++x+\ii \eta) \dd x\Big) \Big] \Big| \leq  N^{-1/3+\epsilon},
\end{align}
for sufficiently large $N$. Combining (\ref{rhs_gue}) and (\ref{rhs_wigner}) with (\ref{im_identity}), we choose $0<\epsilon<\min\{\frac{\omega}{7},\frac{1}{21}\}$ in the setting of Theorem \ref{kol_dist} and obtain
\begin{align}
\sup_{r_0 \leq r \leq \gamma_+ N^{\epsilon}}\Big|\P\Big( \gamma_+ N^{2/3} (\lambda_N- E_+ )<r \Big)-\mathrm{TW}_1(r) \Big| \leq N^{-1/3+\omega},
\end{align}
for sufficiently large $N$. Together with (\ref{upper}), we have hence completed the proof of Theorem \ref{kol_dist}.
\end{proof}

\section{A special case $F(x)=x$: estimates on $\E[\Im m_N]$}\label{sec:toy}
In this section, we prove Theorem \ref{green_comparison} for the special choice $F(x)=x$. It then suffices to compare the expectations of the normalized trace of the resolvent in (\ref{m_sample}) between any sample covariance matrix $X^*X$ and the corresponding white Wishart matrix $W^*W$. 

\begin{proposition}\label{GCT_mn}
Consider a random matrix $X$ satisfying (\ref{sample_condition_1}), (\ref{sample_condition_2}), (\ref{moment_condition_sample}) and (\ref{condition}), and the corresponding Gaussian matrix $W$ which is independent of $X$. Consider the interpolating matrix flow $X(t)$ in~(\ref{dyson_flow}) below and define the normalized trace of the resolvent of $X(t)^*X(t)$, $m_N(t,z)$, as in (\ref{m_sample_t}).  

For any fixed small $\epsilon>0$ and fixed $C_1,C_2>0$, define the following domain of the spectral parameter~$z$ near the upper edge, 
\begin{align}\label{S_edge}
S_{\mathrm{edge}} \equiv& S_{\mathrm{edge}}(\epsilon,C_1,C_2)\nonumber\\
:=&\{ z=E+\ii \eta \in S: -C_1 N^{-2/3} \leq E-E_+ \leq C_2  N^{-2/3+\epsilon}, N^{-1+\epsilon} \leq \eta \leq N^{-2/3-\epsilon}\}\,,
\end{align}
with $S$ given in (\ref{ddd}) and $E_+$ defined in (\ref{endpoint}). Then for any $\tau>0$, we have 
\begin{align}\label{compare_mn}
\Big| \E[ m_N(t,z)]- \E^{\mathrm{W}}[  m_N(z)] \Big| \leq N^{-1/3-\epsilon+\tau},
\end{align}
uniformly in $z \in S_{\mathrm{edge}}$ and $t \geq 0$, for sufficiently large $N \geq N_0(C_1,C_2, \epsilon, \tau)$. Furthermore, there exists a constant $C>0$ depending on $C_1$ and $C_2$ such that
\begin{align}\label{img_sample}
\E[\Im m_N(t, z)] \leq C N^{-1/3},
\end{align}
uniformly in $z \in S_{\mathrm{edge}}$ and $t \geq 0$, for sufficiently large $N$.
\end{proposition}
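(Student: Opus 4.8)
The plan is to prove \eqref{compare_mn} by a continuous interpolation (Dyson-type) argument between the white Wishart matrix $W^*W$ (at $t=0$, after a suitable time reparametrization) and the given sample covariance matrix $X^*X$ (at $t=\infty$, or at $t=1$ depending on the flow normalization in \eqref{dyson_flow}), and then deduce \eqref{img_sample} as an easy corollary. First I would write $\frac{\dd}{\dd t}\E[m_N(t,z)]$ using the chain rule and the resolvent identity; because each matrix entry $X_{ij}(t)$ evolves (stochastically or deterministically) with independent increments, the derivative can be organized as a sum over $(i,j)$ of expectations of the form $\E[\,(\text{entry of }X(t))\cdot(\text{polynomial in entries of }G(t,z))\,]$. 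Then I would apply the cumulant expansion formula, Lemma~\ref{cumulant}, to each such term, expanding in the cumulants $c^{(p+1)}$ of the matrix entries. The point of the interpolation is that the Gaussian (second) cumulant contributions cancel by construction of the flow, so what remains is driven by the third cumulants $c^{(3)}$, the fourth cumulants $c^{(4)}$ (these are $O(N^{-3/2})$ and $O(N^{-2})$ respectively under \eqref{sample_condition_1}, \eqref{moment_condition_sample}), plus a remainder $R_{l+1}$ from higher-order terms that is made negligible by taking $l$ large, using the deterministic bound \eqref{deter_bound} and the moment condition \eqref{moment_condition_sample}.

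The core of the argument is then to bound the third- and fourth-cumulant contributions. Each is a weighted average over indices of an expectation of a product of a bounded number of Green function entries $G_{\mathfrak i \mathfrak j}(t,z)$. The naive bound from the anisotropic local law \eqref{G_0} is $\Psi(z)\sim (N\eta)^{-1/2}(\Im\wt m)^{1/2}+(N\eta)^{-1}$, which at the edge scale $z\in S_{\mathrm{edge}}$ with $\eta$ as small as $N^{-1+\epsilon}$ is not small enough — this is exactly the difficulty flagged in the introduction. The key step, therefore, is the \emph{expansion mechanism}: repeatedly apply the cumulant expansion (again Lemma~\ref{cumulant}) to the averaged products of Green function entries, exploiting the block structure of $H(z)$ in \eqref{hermitization}, to trade factors of $\Psi$ for extra smallness. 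Concretely, the third-order terms all have unmatched indices (in the sense of Definition~\ref{unmatch_def}) and can be expanded to arbitrarily high order in the small parameter $\Psi$ so that their net contribution to $\frac{\dd}{\dd t}\E[m_N]$ is $o(N^{-1/3-\epsilon})$ after integrating in $t$; the fourth-order terms reduce, after enough expansions, to trace-like correlation functions of products of resolvents, whose size is controlled using the local law \eqref{G_average} and Lemma~\ref{sample_m} (in particular the bound $\Im\wt m(z)\sim\sqrt{\kappa+\eta}\sim N^{-1/3}$ near the edge). Collecting these estimates, $|\frac{\dd}{\dd t}\E[m_N(t,z)]|$ is integrable in $t$ with total contribution $\prec N^{-1/3-\epsilon}$, which upon choosing the stochastic-domination exponents appropriately gives the deterministic bound $N^{-1/3-\epsilon+\tau}$ in \eqref{compare_mn}.

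For the second statement \eqref{img_sample}, I would simply note that \eqref{compare_mn} gives $\E[\Im m_N(t,z)]\le \E^{W}[\Im m_N(z)]+N^{-1/3-\epsilon+\tau}$, and then bound $\E^{W}[\Im m_N(z)]$ for the white Wishart ensemble. For that, I would use the determinantal/Pfaffian structure recalled in Section~2.2: $\Im m_N(z)=\frac{1}{N}\sum_j \theta_\eta(\lambda_j-E)$ up to the $\pi$ normalization, so $\E^{W}[\Im m_N(z)]$ is an integral of the mollifier $\theta_\eta$ against the one-point correlation function $K_{N,\beta}(x,x)$. Near the upper edge, the rescaled kernel $K^{\mathrm{edge}}_{N,\beta}$ converges to the Airy kernel with rate $N^{-1/3}$ by Proposition~\ref{kernel_diff}, and $K_{\mathrm{Airy}}(x,x)$ is bounded and has sub-exponential decay for $x$ bounded below (Lemma~\ref{lemma_airy_bound}); combined with the eigenvalue rigidity \eqref{rigidity1} to handle the region away from the edge, this yields $\E^{W}[\Im m_N(z)]\le C N^{-1/3}$ uniformly on $S_{\mathrm{edge}}$, and hence \eqref{img_sample}.

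The main obstacle I expect is controlling the fourth-cumulant terms in the Green function flow: at the spectral scale $\eta$ slightly above $N^{-1}$, a single application of the local law is far from sufficient, so one must set up the iterative expansion of averaged products of resolvent entries carefully — tracking which indices are summed, keeping the block structure of $G$ in \eqref{hermitization_inverse} straight (the off-diagonal $X^*\mathcal R$ and $XR$ blocks behave differently from the diagonal $R$ and $z\mathcal R$ blocks), and verifying that each expansion step genuinely gains a factor of order $N^{-\epsilon}$ so that the procedure terminates after finitely many steps with an acceptable error. Making the bookkeeping of "matched" versus "unmatched" indices precise, and showing the third-order terms are all of the favorable (unmatched) type, is the technical heart of the proof and is where I would spend most of the effort (this is deferred to Sections~\ref{sec:expand} and~\ref{sec:type0} in the paper).
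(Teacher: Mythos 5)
Your overall architecture matches the paper's: interpolation flow, cumulant expansion with cancellation of the second-order terms, iterative expansion of the averaged products exploiting the block structure, the observation that all third-order terms are unmatched, and the reduction of the fourth-order terms to trace-like quantities via a Ward-identity-type bound. The derivation of \eqref{img_sample} from \eqref{compare_mn} plus the kernel asymptotics for the Wishart ensemble is also exactly what the paper does.

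However, there is a genuine gap in your treatment of the fourth-order (matched) terms, which is the technical heart of the result. You propose to control the resulting trace-like correlation functions ``using the local law \eqref{G_average} and Lemma~\ref{sample_m}.'' This cannot work at the spectral scale in question. After the Ward identity, a type-$0$ term of degree $d\ge 2$ is bounded by $\frac{\Im m_N(z)}{N\eta}+\frac1N$, and the local law \eqref{G_average} only gives $\Im m_N \prec \Im\wt m + (N\eta)^{-1}$. For $z\in S_{\mathrm{edge}}$ with $\eta=N^{-1+\epsilon}$ the error term $(N\eta)^{-1}=N^{-\epsilon}$ dominates the main term $\Im\wt m\sim N^{-1/3}$, so the local law only yields $\Im m_N\prec N^{-\epsilon}$ and hence a bound of order $N^{-2\epsilon}$ on the trace-like quantities --- nowhere near the required $N^{-1/3-\epsilon}$. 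The sharp input $\E[\Im m_N(t,z)]\le CN^{-1/3}$ is \emph{not} a consequence of the local law; in your plan it appears only as the corollary \eqref{img_sample}, derived \emph{from} \eqref{compare_mn}, which makes the argument circular. The paper breaks this circularity with a separate recursive scheme (Proposition~\ref{lemma_trace_sample}): each type-$0$ term is differentiated in $t$ again, producing type-$0$ terms of strictly higher degree; integrating from $t'$ to $T=8\log N$ anchors the estimate at the Wishart endpoint, where the explicit kernel asymptotics (Proposition~\ref{kernel_diff}) and the Ward identity give $\E^{W}[\Im m]\le CN^{-1/3}$ and hence the $N^{-1/3-\epsilon}$ bound; a downward induction in the degree (terminating at degree $D$ where $\Psi^D\le N^{-1}$) then propagates the bound back to all $t$. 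Without this recursion --- i.e.\ without feeding the Wishart kernel estimate into the proof of \eqref{compare_mn} itself rather than only into its corollary --- the fourth-order terms cannot be closed.
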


In the rest of this section we prove Proposition~\ref{GCT_mn}; its proof is split into several parts organized in four subsections. Key results are Proposition \ref{unmatch_lemma}, Proposition \ref{lemma_expand_type} and Proposition \ref{lemma_trace_sample}, whose proofs are presented in Section \ref{sec:expand} and Section \ref{sec:type0} respectively.  We will consider the real sample covariance matrices, the complex case can be proved analogously. 

\subsection{Interpolation between $X^*X$ and $W^*W$}\label{sec:interpolate}

Consider the interpolating matrix flow 
\begin{equation}\label{dyson_flow}
X(t):=\mathrm{e}^{-\frac{t}{2}}X +\sqrt{1-\mathrm{e}^{-t}} W \in \R^{M \times N}\,, \qquad t \in \R^+,
\end{equation}
where $X$ is a real-valued matrix satisfying (\ref{sample_condition_1}), (\ref{moment_condition_sample}) and (\ref{condition}), and $W$ is the corresponding Gaussian matrix chosen independently from $X$.

For any $t \in \R^+$, $z \in \C^{+}$, we define the resolvent of the time-dependent sample covariance matrix $X^*(t)X(t) \in \R^{N \times N}$ and the resolvent of the accompanying matrix $X(t)X^*(t) \in \R^{M \times M}$ as
\begin{align}\label{le time dependent G}
R(t,z):=&\big( X^*(t) X(t)-zI_N \big)^{-1}; \qquad  \mathcal{R}(t,z):=\big( X(t) X^*(t)-zI_M \big)^{-1} , \qquad z \in \C^{+}.
\end{align}
Similarly as in (\ref{hermitization}) and (\ref{hermitization_inverse}), we linearize the rectangular matrix $X(t)$ using the following block matrix
\begin{align}\label{block}
H(t,z):=\begin{pmatrix}
-zI_N  &  X^*(t)  \\
X(t)   & -I_M
\end{pmatrix}\in \C^{(N+M) \times (N+M)}, \qquad t \in \R^+,~z \in \C^{+}.
\end{align}
To distinguish the indices with respect to the block structure, we use Latin letters for indices taking values in $\llbracket 1, N \rrbracket$, Greek letters for indices taking values in $\llbracket N+1, N+M \rrbracket$. We also use calligraphic letters, \eg $\mathfrak{i}, \mathfrak{j}$, to denote the indices ranging from $1$ to $N+M$. We denote the matrix entries of $H(t,z)$ by $h_{\mathfrak{ij}} \equiv h_{\mathfrak{ij}}(t,z)$.

Define the Green function of the linearization matrix $H(t,z)$ by
\begin{align}\label{blockG}
 G(t,z):=H(t,z)^{-1}
=\begin{pmatrix}
R(t,z) &  X^*(t)  \mathcal{R}(t,z) \\
 X(t)  R(t,z)  & z \mathcal{R}(t,z)
\end{pmatrix}, \qquad t \in \R^+, z \in \C^+.
\end{align}

As in (\ref{m_sample}), the normalized trace of the resolvent of $X^*(t)X(t)$ is then given by
\begin{align}\label{m_sample_t}
m(t,z):=\frac{1}{N} \Tr {R}(t,z)=\frac{1}{N}\sum_{v=1}^{N} G_{vv}(t,z); \qquad t \in \R^+,~z \in \C^+.
\end{align}
For notational simplicity, we introduce the partial normalized traces
\begin{align}\label{ggu}
\ud{G}(t,z):=\frac{1}{N}\sum_{v=1}^{N} G_{vv}(t,z)=m(t,z); \quad \ud{\G}(t,z):=\frac{1}{M}\sum_{\nu =N+1}^{N+M} G_{\nu \nu}(t,z), \qquad t \in \R^+,~z \in \C^+.
\end{align}
In the following, we often ignore the parameters and write for short
$$H\equiv H(t,z), \quad G \equiv G(t,z), \quad m \equiv m(t,z), \quad \ud{G} \equiv \ud{G}(t,z), \quad \ud{\G}\equiv \ud{\G}(t,z), \qquad t \in \R^+,~z \in \C^+.$$ 

We remark that, by a simple continuity argument in time parameter, the local law in Theorem \ref{locallaw} still holds for the time dependent Green function $G(t,z)$ for any $t \in \R^+$ and $z \in S$ given in (\ref{ddd}), \ie
\begin{align}\label{G}
\max_{\substack{ 1\leq v \leq N \\N+1 \leq \nu \leq N+M\\1\leq  \mathfrak{i} \neq \mathfrak{j} \leq N+M}}\Big\{ \big|\ud{G}-\wt m\big|\,,\big|\ud{\G}+\frac{1}{1+\wt m} \big|\,,  \big|G_{vv}-\wt m\big|\,, \big|G_{\nu \nu}+\frac{1}{1+\wt m} \big|\,, \big|G_{\mathfrak{ij}}\big| \Big\} \prec \Psi,
\end{align}
where $\wt m$ is the deterministic function defined in (\ref{mplawgamma-1}), and the control parameter $\Psi$ is given in (\ref{control_sample}).

Differentiating $\E[m_N(t,z)]$ with respect to $t$ and using the block structure in (\ref{block}) and (\ref{blockG}), we have
\begin{align}\label{step00}
\frac{\dd }{\dd t}\E [  m_N(t,z)]=&\E \Big[ \frac{1}{N} \sum_{v=1}^N \frac{\dd }{\dd t}G_{vv}(t,z)\Big]=\E \Big[ \frac{1}{N} \sum_{v,b=1}^N \sum_{\alpha=N+1}^{N+M} -2 \dot{h}_{ \alpha b}(t) G_{v b} G_{\alpha v}  \Big].
\end{align}
It is not hard to check from (\ref{dyson_flow}) that
\begin{align}\label{diff_entry}
	(\dot{h}_{\alpha b}(t))_{\alpha b}=-\frac{\ee^{-t/2}}{2} X+\frac{\ee^{-t}}{2\sqrt{1-\ee^{-t}}} W, \qquad b \in \llbracket 1, N \rrbracket, \quad \alpha \in \llbracket N+1, N+M \rrbracket.
\end{align}
Note that the second order cumulants of matrix entries of $X$ and $W$ are identical to $N^{-1}$, and all the third and higher order cumulants of the Gaussian entries of $W$ are vanishing. 

Applying the cumulant expansion formulas in Lemma \ref{cumulant} with respect to $\dot{h}_{\alpha b}(t)$ in (\ref{diff_entry}) on the right side of (\ref{step00}) and that the matrix entries of $X$ and $W$ are all independent random variables, we observe that the second order cumulant terms are canceled precisely and we stop the expansions at the fourth order, \ie
\begin{align}\label{step0}
\frac{\dd }{\dd t}\E [ m_N(t,z)]=&\frac{1}{N} \sum_{v,b=1}^N \sum_{\alpha=N+1}^{N+M}  \sum_{p+1\geq 3}^{4} \frac{1}{p!} \frac{s^{(p+1)}_{\alpha b}(t)}{N^{\frac{p+1}{2}}}  \E \Big[ \frac{ \partial^{p}  G_{v b} G_{\alpha v}}{\partial h^p_{ \alpha b} } \Big]+O_{\prec}\big(\frac{1}{\sqrt{N}}\big)\nonumber\\
=&-\frac{1}{2N} \sum_{v,b=1}^N \sum_{\alpha=N+1}^{N+M} \sum_{p+1\geq 3}^{4} \frac{1}{p!} \frac{s^{(p+1)}_{ \alpha b}(t)}{N^{\frac{p+1}{2}}} \E \Big[ \frac{ \partial^{p+1}  G_{v v} }{\partial h^{p+1}_{\alpha b} } \Big]+O_{\prec}\big(\frac{1}{\sqrt{N}}\big),
\end{align}
where $s^{(p+1)}_{\alpha b}(t)$ is the $(p+1)$-th cumulant of the normalized matrix entries $(\sqrt{N} h_{\alpha b}(t))$ given by 
\begin{align}\label{t_cumu}
s^{(p+1)}_{\alpha b} \equiv s^{(p+1)}_{\alpha b}(t):=s^{(p+1)}_{\alpha b}(0)\ee^{-\frac{(p+1)t}{2}}, \qquad p+1 \geq 3.
\end{align}
In (\ref{step0}), we truncate the cumulant expansions at the fourth order and the error given in (\ref{cumulant_error_real}) is estimated using the local law in (\ref{G}), properties of stochastic domination in Lemma \ref{dominant_prop}, and that the normalized matrix entries $(\sqrt{N} h_{\alpha b}(t))$ have finite moments from the condition in (\ref{moment_condition_sample}).

To compute the partial derivatives on the right side of (\ref{step0}), we are using the differential rule for the Green function entries
\begin{equation}\label{dH}
\frac{\partial G_{\mathfrak{i}\mathfrak{j}}}{ \partial h_{ \alpha b}}=-G_{\mathfrak{i} \alpha} G_{b \mathfrak{j}}-G_{\mathfrak{i} b} G_{\alpha \mathfrak{j}},\qquad \mathfrak{i}, \mathfrak{j} \in \llbracket 1,  N+M \rrbracket, \quad b \in \llbracket 1,  N \rrbracket, \quad \alpha \in \llbracket N+1,  N+M \rrbracket.
\end{equation}
Then the resulting terms on the right side of (\ref{step0}) can be written as a linear combination of averaged products of the Green function entries. We give two examples of the averaged product of Green function entries below for $p+1=3$ and $p+1=4$ respectively,
\begin{align}\label{example_simple}
	 \frac{1}{N^\frac{3}{2}} \sum_{v,b=1}^N \sum_{\alpha=N+1}^{N+M}  s^{(3)}_{ \alpha b}(t) \E \Big[ G_{v \alpha} G_{b v} G_{\alpha \alpha} G_{bb} \Big], \quad \frac{1}{N^2} \sum_{v,b=1}^N \sum_{\alpha=N+1}^{N+M} s^{(4)}_{ \alpha b}(t) \E \Big[ G_{v \alpha} G_{\alpha v} G_{\alpha \alpha} (G_{bb})^2 \Big].
\end{align}

In general, we introduce an abstract form of averaged products of Green function entries as follows. For fixed integers $m_1, m_2$, we use the Latin letters in $\mathcal{I}_N:=\{v_j\}_{j=1}^{m_1}$ (may include the indices $v,b$ in (\ref{step0})) to denote the summation indices taking values in $\llbracket 1, N \rrbracket$, and use the Greek letters in $\mathcal{I}_M:=\{{\nu}_j\}_{j=1}^{m_2}$ (may include $\alpha$) to denote the indices taking values in $\llbracket N+1, N+M \rrbracket$. We also set $\mathcal{I}:=\mathcal{I}_N \cup \mathcal{I}_M:=\{\mathfrak{v}_j\}_{j=1}^{m}$, $m=m_1+m_2$, where each element in $\mathcal{I}$, denoted by $\mathfrak{v}_j$, is from either $\mathcal{I}_N$ or $\mathcal{I}_M$. We use $\prod^{n^o}_{i=1} G_{x_i y_i}$ with~$n^o \in \N$ to denote a general product of the matrix entries of the Green function $G$, where each row index~$x_i$ and column index $y_i$ of the Green function entries represents an element in $\mathcal{I}$. For later purposes of expansions explained in the next section, we also include the centered diagonal Green function entries $\ud G-\wt m$ and $\ud \G+\frac{1}{1+\wt m}$ in the product. We will use $n^{g},n^{\mathfrak{g}} \in \N$ to denote the powers of these two centered diagonal Green function factors. Then we write the abstract form as
\begin{align}\label{form}
&\frac{1}{N^{m_1+m_2}}  \sum_{v_1,\cdots v_{m_1}=1}^N \sum_{\nu_1,\cdots \nu_{m_2}=N+1}^{N+M}  c_{v_1, \ldots, v_{m_1},\nu_1, \ldots, \nu_{m_2}}(t,z) \Big(  \prod^{n^o}_{i=1} G_{x_{i} y_{i}} \Big) \Big( \ud{G} -\wt m\Big)^{n^{g}}  \Big( \ud{\G} +\frac{1}{1+\wt m}\Big)^{n^{\mathfrak{g}} }\nonumber\\
=:&\frac{1}{N^{\#\mathcal{I}} }\sum_{\mathcal{I}} c_{\mathcal{I}} (t,z) \Big(\prod^{n^o}_{i=1} G_{x_i y_i}\Big) \Big(\ud{G}-\wt m\Big)^{n^g} \Big( \ud{\G}+\frac{1}{1+\wt m}\Big)^{n^{\mathfrak{g}}}, \qquad t \in \R^+,  z \in \C^+,
\end{align}
 where the coefficients $\{c_{\mathcal{I}} \equiv c_{\mathcal{I}}(t,z)\}$ are complex-valued deterministic functions of $t,z$, which are uniformly bounded for any $t \in \R^{+}, z \in S$ given in (\ref{ddd}) and are order one in $N \in \N$.

We denote the total number of Green function entries (including the centered diagonal Green function entries $\ud G-\wt m$ and $\ud \G+\frac{1}{1+\wt m}$) in the averaged product in (\ref{form}) by
\begin{align}\label{n_number}
n:=n^o+n^{g}+n^{\mathfrak{g}},
\end{align}
and denote the number of off-diagonal Green function entries in the product as
 \begin{align}\label{d_o}
 d^o:=\#\{ 1 \leq i\leq n^o: x_i \neq y_i\} \leq n^{o}.
 \end{align}
We further define the degree, denoted by $d$, to be the number of off-diagonal Green function entries plus the powers of the centered diagonal Green function factor $\ud G-\wt m$ and $\ud \G+\frac{1}{1+\wt m}$, \ie
\begin{align}\label{degree_0}
d:=d^o+n^g+n^{\mathfrak{g}} \leq n.
\end{align}
We use $\mathcal{Q}_d \equiv \mathcal{Q}_d(t,z)$ to denote the collection of the averaged products of Green function entries of the form in (\ref{form}) with degree $d$. For any $Q_d \equiv Q_d(t,z) \in \mathcal{Q}_d$, it directly follows from the local law in (\ref{G}) that, for any $t \geq 0$ and $z \in S$ given in (\ref{ddd}),
\begin{align}\label{localaw_0}
|Q_d(t,z)| \prec \Psi^d+N^{-1}.
\end{align}
We will often omit the parameters $z$ and $t$ for notational simplicity. 

In the following, we use the symbol $\mathfrak{v}_j$ to denote the free summation indices in $\mathcal{I}$ taking values in either $\llbracket 1,N \rrbracket$ or $ \llbracket N+1,N+M \rrbracket$, and the letters $x_i$, $y_i$ as the row and column indices of the Green function entries. In order to avoid confusion, we clarify that $x_i=y_i=\mathfrak{v}_j$ means that both $x_i$ and $y_i$ stand for the same element $\mathfrak{v}_j \in \mathcal{I}$. Further we write $x_i \neq y_i$, if $x_i$ and $y_i$ represent two distinct summation indices in~$\mathcal{I}$. They could have the same value when summing over $\mathcal{I}$.

\subsection{Unmatched terms}
From (\ref{dH}), we observe that the third order terms corresponding to $p+1=3$ in (\ref{step0}) can be written as averaged products of Green function entries of the form in (\ref{form}) up to a factor $\sqrt{N}$, with $\mathcal{I}=\{v,b,\alpha\}$, $n^o=4$, and $n^{g}=n^{\mathfrak{g}}=0$. Some examples of these terms are
\begin{align}\label{third_term}
\sqrt{N} \frac{1}{N^2}\sum_{v,\alpha, b}   s^{(3)}_{ \alpha b}(t) \E \Big[ G_{v \alpha} G_{b v} G_{\alpha \alpha} G_{bb} \Big], \quad \sqrt{N} \frac{1}{N^2}\sum_{v,\alpha, b}   s^{(3)}_{ \alpha b}(t) \E \Big[ G_{v \alpha} G_{\alpha v} G_{\alpha b} G_{bb} \Big].
\end{align}
Note that the indices $\alpha$ and $b$ appear three times as the row or column index in the product of Green function entries and thus all the resulting third order terms are unmatched as defined next. The fourth order terms for $p+1=4$ are then matched and are discussed in the next subsection.

\begin{definition}[Unmatched terms, unmatched indices]\label{unmatch_def}
Consider a term of the form in (\ref{form}) with degree $d$, denoted by $Q_d \in \mathcal{Q}_d$. For any summation index $\mathfrak{v}_j \in \mathcal{I}$, let $\nn(\mathfrak{v}_j)$ be the total number of appearances of the index $\mathfrak{v}_j$ as the row or column index in the product of Green function entries, \ie
\begin{align}\label{nu_number}
\nn(\mathfrak{v}_j):=&\#\{ 1 \leq i \leq n^o: x_i=\mathfrak{v}_j \}+\#\{ 1 \leq i \leq n^o: y_i=\mathfrak{v}_j \}.
\end{align}
If a summation index $\mathfrak{v}_j \in \mathcal{I}$ appears an odd number of times as the row or column index in the product of Green function entries, then we say that the summation index $\mathfrak{v}_j$ is unmatched. The set of all unmatched summation indices is defined as 
\begin{align}\label{unmatch_set}
\mathcal{I}^o:=\{\mathfrak{v}_j \in \mathcal{I} :  \nn(\mathfrak{v}_j) \mbox{ is odd} \}.
\end{align}
If $\mathcal{I}^o=\emptyset$, then we say $Q_d$ is a matched term. Otherwise, $Q_d$ is called an unmatched term, denoted by $Q^o_d$. The collection of unmatched terms of the form in (\ref{form}) with degree $d$ is denoted by $\mathcal{Q}_d^o \subset \mathcal{Q}_d$. 
\end{definition}

The following proposition asserts that the expectation of any unmatched term is much smaller than its naive size obtained by power counting using the local law as in (\ref{localaw_0}). 
\begin{proposition}\label{unmatch_lemma}
Consider an unmatched term $Q^o_d \in \mathcal{Q}_d^o$ of the form in (\ref{form}) with fixed $n \in \N$ given in (\ref{n_number}). For any fixed integer $D \geq d+1$, we have
\begin{align}\label{unmatch_lemma equation}
|\E[Q^o_d(t,z)]|=O_{\prec}\big(N^{-1}+\Psi^D\big)\,,
\end{align}
uniformly in $t \in \R^+$ and $z \in S$ given in (\ref{ddd}). 
\end{proposition}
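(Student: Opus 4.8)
The plan is to show that an unmatched index generates an extra gain by a single cumulant expansion. Let $Q^o_d$ be an unmatched term as in \eqref{form}, and pick an unmatched summation index, say $\mathfrak{v}_{j_0} \in \mathcal{I}^o$; by symmetry of the two block types we may assume it is a Latin index $b_0 \in \llbracket 1,N\rrbracket$ (the Greek case is identical with $\alpha_0$ in place of $b_0$). Since $b_0$ appears an odd number of times, in particular at least once, as a row or column index in $\prod_{i=1}^{n^o} G_{x_iy_i}$, we single out one such factor and use the trivial resolvent identity for the linearization $H(t,z)$, namely $zG_{vb_0} = -\delta_{vb_0} + \sum_{\alpha=N+1}^{N+M}(X(t))_{b_0\alpha}^{*}\cdots$ — more precisely we write $G_{\mathfrak{i}b_0}$ (or $G_{b_0\mathfrak j}$) using the defining relation $HG = I$ applied on the $b_0$-th column, which produces a factor $h_{\alpha b_0}$ contracted against another Green function entry. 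This is the analogue of the expansion mechanism announced in the introduction; equivalently one can directly apply the cumulant expansion formula of Lemma~\ref{cumulant} to $\E[h_{\alpha b_0}\,(\cdots)]$ after inserting such a factor. Either way, one arithmetic expansion step in the index $b_0$ replaces $Q^o_d$ by a linear combination of terms each carrying an extra power of $N^{-1/2}$ from the cumulant $s^{(p+1)}_{\alpha b_0}/N^{(p+1)/2}$ against $p+1\ge 2$ differentiations, plus the leading second-order piece which itself raises the degree by producing at least one new off-diagonal entry or reducing the count of appearances of $b_0$.

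The key bookkeeping point, exactly as for Wigner matrices in \cite{Schnelli+Xu}, is a \emph{parity invariant}: each application of the differentiation rule \eqref{dH} to a Green function factor splits $G_{x_iy_i}$ into $-G_{x_i\alpha}G_{b_0 y_i} - G_{x_i b_0}G_{\alpha y_i}$, which changes $\nn(b_0)$ by an even number, so $b_0$ remains unmatched throughout the expansion unless it is eventually paired off — and pairing it off with the newly created factors requires, by a counting argument, that the degree strictly increases. Concretely, I would run an induction on $d$ (or on a suitably defined complexity of the term), with the base case $d$ large enough that \eqref{localaw_0} already gives $|Q^o_d|\prec \Psi^D$, and the inductive step: one expansion of the chosen unmatched index either produces (i) higher-cumulant terms with a genuine $N^{-1/2}$ gain, which are $O_\prec(N^{-1/2}(\Psi^d + N^{-1}))$ and hence $O_\prec(N^{-1}+\Psi^{D})$ on $S$ since $\Psi \ge N^{-1+\epsilon}$ forces $N^{-1/2}\Psi^d \ll \Psi^{d+1} + N^{-1}$ up to $N^{o(1)}$, or (ii) second-order terms of strictly higher degree, to which the induction hypothesis applies, or (iii) terms where $b_0$ has become matched but the degree has strictly increased, again covered by the induction hypothesis applied at degree $\ge d+1$. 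Iterating until every term either has degree $\ge D$ or has accumulated enough $N^{-1/2}$ factors to be $O_\prec(N^{-1})$ yields \eqref{unmatch_lemma equation}. Throughout one controls the error term in the cumulant expansion \eqref{cumulant_error_real} using the deterministic bound \eqref{deter_bound}, the moment condition \eqref{moment_condition_sample}, and Lemma~\ref{dominant_prop}(3), exactly as in the derivation of \eqref{step0}; the truncation order $l$ in Lemma~\ref{cumulant} is chosen depending on $D$.

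The main obstacle I anticipate is organizing the expansion so that it provably \emph{terminates} with the claimed bound: because the linearization \eqref{block} mixes the two index blocks, a single differentiation can create indices of either type, and one must check that the parity/degree invariant is robust under all the cases generated by \eqref{dH} — in particular that creating the new summation index $\alpha$ (which appears exactly twice, hence matched) never spoils the oddness of $b_0$, and that the partial normalized traces $\ud G$, $\ud{\G}$ and the centered factors $\ud G - \wt m$, $\ud{\G} + (1+\wt m)^{-1}$ are handled consistently (their derivatives again obey \eqref{dH} and contribute to the degree via $n^g, n^{\mathfrak g}$). A secondary technical nuisance is that an unmatched index could a priori be hidden inside a centered diagonal trace factor rather than an explicit $G_{x_iy_i}$; but by definition \eqref{nu_number} only counts explicit off-diagonal/diagonal entries among the $n^o$ factors, so $\mathcal{I}^o$ refers only to those, and the traces are dealt with separately by expanding them as in Section~\ref{sec:expand}. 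Once the invariant is pinned down, the estimate is a routine, if lengthy, induction; the proof is therefore deferred to Section~\ref{sec:expand} where the full expansion mechanism is set up.
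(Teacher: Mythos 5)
Your overall strategy (expand the unmatched index via the identities $HG=I$ plus cumulant expansion, track the parity of $\nn(b_0)$, and iterate) is the same as the paper's, but the induction as you have structured it does not close, for two concrete reasons.

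First, your trichotomy (i)--(iii) omits the terms that actually drive the paper's argument: among the second-order terms there are leading terms of the \emph{same} degree $d$ in which the expanded index is still unmatched, namely those where the derivative $\partial/\partial h_{\alpha k}$ (resp.\ $\partial/\partial h_{b\gamma}$) hits \emph{another} off-diagonal entry carrying the unmatched index, producing a diagonal factor $G_{\alpha\alpha}$ that is then replaced by its deterministic value; see (\ref{example_1})--(\ref{example_2}) and the first line of (\ref{case_2a}). These terms have the same degree $d$ and the index remains unmatched with $\nn^o$ reduced by two, so an induction on the degree alone never terminates on them. You do remark informally that the leading piece may merely "reduce the count of appearances of $b_0$", but you never supply the termination criterion: the number of such same-degree leading terms is exactly $\nn^o-1$ (one for each other off-diagonal entry containing the index, cf.\ (\ref{nu_number_0})), so after $(\nn^o+1)/2$ inner iterations this count hits zero and \emph{only then} does everything left have degree $\ge d+1$ or carry a $N^{-1/2}$ prefactor. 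This inner iteration on $\nn^o$, nested inside the outer iteration that raises the degree to $D$, is the key combinatorial point of Proposition~\ref{unmatch_lemma} and is missing from your write-up.

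Second, your description of the second-order terms overlooks the self-reproducing contribution: the derivative acting on the freshly created entry $G_{k\alpha}$ (resp.\ $G_{\gamma b}$) yields $-\E[\ud G\,Q_d]=-\wt m\,\E[Q_d]-\E[(\ud G-\wt m)Q_d]$ as in (\ref{temp_2}) (resp.\ (\ref{temp_23})), whose first piece is a constant multiple of $\E[Q_d]$ itself — it neither raises the degree nor changes $\nn(b_0)$. It must be moved to the left-hand side and divided out using $|1+\wt m|\sim 1$ (resp.\ the relation (\ref{mplawgamma-1}) giving $z-\varrho/(1+\wt m)=-1/\wt m$ with $|\wt m|\sim 1$); without this absorption the expansion is circular. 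A minor further point: your one-step bound on the third-order terms, $O_\prec(N^{-1/2}(\Psi^d+N^{-1}))$, is not of the claimed form $O_\prec(N^{-1}+\Psi^D)$ for general $D$; these terms are themselves unmatched and must be fed back into the iteration so that they end with degree at least $D-1$ and prefactor $N^{-1/2}$, as in (\ref{third_unmatch3}).
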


Armed with Proposition \ref{unmatch_lemma}, the third order terms on the right side of (\ref{step0}) are unmatched terms of the form in (\ref{form}) with $\nn(\alpha)=\nn(b)=3$, $n=5$, up to a factor $\sqrt{N}$, and hence are bounded by $O_{\prec}\big(\sqrt{N} (N^{-1}+\Psi^D)\big)$. By choosing $D$ sufficiently large depending on $\epsilon$ in (\ref{ddd}) to make $\Psi^D \leq N^{-1}$ for large $N$ (see (\ref{control_sample})), the third order terms are bounded by $O_{\prec}(N^{-1/2})$. We can hence rewrite (\ref{step0}) as
\begin{align}\label{step1}
\frac{\dd }{\dd t}\E [m_N(t,z)]=&-\frac{1}{12N^3}  \sum_{v,\alpha,b}  s^{(4)}_{ \alpha b} \ee^{-2t} \E \Big[ \frac{ \partial^{4}  G_{v v} }{\partial h^{4}_{\alpha b} } \Big]+O_{\prec}\big(\frac{1}{\sqrt{N}}\big).
\end{align}

It then suffices to estimate the remaining fourth order terms on the right side of (\ref{step1}) which are matched terms from Definition \ref{unmatch_def}.

\subsection{Matched terms}
We now write out by (\ref{dH}) the fourth order terms on the  right side of (\ref{step1}) and observe that they are of the form in (\ref{form}) with $\mathcal{I}=\{v,b,\alpha\}$, $n^o=5$, and $n^{g}=n^{\mathfrak{g}}=0$. For example, we find terms as below
 \begin{align}\label{fourth_term}
\frac{1}{N^2} \sum_{v,\alpha, b} s^{(4)}_{ \alpha b}(t) \E \Big[ G_{v \alpha} G_{\alpha v} G_{\alpha \alpha} (G_{bb})^2 \Big], \quad \frac{1}{N^2} \sum_{v,\alpha, b} s^{(4)}_{ \alpha b}(t) \E \Big[ G_{v b} G_{bv} (G_{\alpha \alpha})^2 G_{bb} \Big].
\end{align}
In all the resulting fourth order terms, the indices $\alpha$ and $b$ appear four times as the row or column index in the product of the Green function entries, \ie $\nn(\alpha)=\nn(b)=4$, and $\nn(v)=2$. These terms are referred to as type-$\alpha b$ terms as defined next. 

\begin{definition}[Type-$\alpha b$ terms, type-$b$ terms, and type-0 terms]\label{def_type_AB}
For fixed integers $n^o, n^g, n^{\mathfrak{g}}\in \N$ and a free summation index set $\mathcal{I}:=\{\mathfrak{v}_j\}_{j=1}^m~(m \in \N)$, consider an averaged product of Green function entries as in (\ref{form}), with the two summation indices $\alpha,b$ singled out, of the form
\begin{align}\label{form_ab}
\frac{1}{N^{2+\#\mathcal{I}} }  \sum_{\alpha=N+1}^{N+M} \sum_{b=1}^N \sum_{\mathcal{I}} c_{\mathcal{I},\alpha,b}(t,z) \Big(\prod^{n^o}_{i=1} G_{x_i y_i}\Big) \Big(\ud{G}-\wt m\Big)^{n^{g}} \Big( \ud{\G}+\frac{1}{1+\wt m}\Big)^{n^{\mathfrak{g}}}, \qquad t \in \R^+,  z \in \C^+\,,
\end{align}
where each row index $x_i$ and column index $y_i$ of the Green function entries represents $\alpha$, $b$ or any summation index in $\mathcal{I}$, and the coefficients $\{c_{\mathcal{I}, \alpha,b} \equiv c_{\mathcal{I}, \alpha,b}(t,z)\}$ are uniformly bounded complex functions for any $t\in \R^+, z\in S$ given in (\ref{ddd}) and are order one in $N \in \N$. The number of all the Green function entries in the product including the centered entries $\ud G-\wt m$ and $\ud \G+\frac{1}{1+\wt m}$ is denoted by $n$, as defined in~(\ref{n_number}). The number of off-diagonal Green function entries in the product, $d^o$, is given in (\ref{d_o}). The degree of such a term, denoted by $d$, is defined as in (\ref{degree_0}). Recall that the number of appearances of any summation index $\mathfrak{v}_j \in \mathcal{I}$ as a row or column index of the Green function entries, denoted by $\nn(\mathfrak{v}_j)$, is defined in (\ref{nu_number}). We also define $\nn(\alpha)$ and $\nn(b)$ in the same way for the two special indices $\alpha$ and $b$.

A {\it type-$\alpha b$ term}, denoted by $P_d^{\alpha b}$, is of the form in (\ref{form_ab}) of degree $d$, with $\nn(\alpha)=\nn(b)=4$ for these two special indices, and $\nn(\mathfrak{v}_j)=2$ for any $\mathfrak{v}_j \in \mathcal{I}$. Moreover, there are no diagonal Green function entries in the product $\prod_{i=1}^{n^o} G_{x_i y_i}$ other than $G_{\alpha \alpha}$ and $G_{bb}$. In other words, if $x_i= y_i~(1\leq i\leq n^{o})$, then $x_i=y_i=\alpha$ or $x_i=y_i=b$. We denote by $\mathcal{P}_d^{\alpha b} \equiv \mathcal{P}_d^{\alpha b}(t,z)$ the collection of all type-$\alpha b$ terms of degree~$d$. We remark that type-$\alpha b$ terms are matched in the sense of Definition ~\ref{unmatch_def}.

Similarly, a {\it type-$b$ term}, denoted by $P_d^{b}$, is of the form in (\ref{form_ab}) of degree $d$, where $\nn(\alpha)=0$, $\nn(b)=4$, and $\nn(\mathfrak{v}_j)=2$ for all $\mathfrak{v}_j \in \mathcal{I}$. Moreover, we assume that $x_i \neq y_i~(1\leq i\leq n^{o})$ unless $x_i=y_i=b$. We denote by $\mathcal{P}_d^{\alpha b} \equiv \mathcal{P}_d^{\alpha b}(t,z)$ the collection of all type-$b$ terms of degree~$d$. We remark that, though the index $\alpha$ will no longer appear as the row or column index in the product of Green function entries, we still keep it in the notation in order to emphasize the inheritance from the original form in~(\ref{form_ab}).

Finally, a {\it type-0 term}, denoted by $P_d$, is of the form in (\ref{form_ab}) of degree $d$, where $\nn(\alpha)=\nn(b)=0$ and $\nn(\mathfrak{v}_j)=2$ for any $\mathfrak{v}_j \in \mathcal{I}$. We also assume that there are no diagonal Green function entries in the product, \ie $x_i \neq y_i~(1\leq i\leq n^{o})$.

We denote the collection of all type-0 terms of degree~$d$ by $\mathcal{P}_d \equiv \mathcal{P}_d(t,z)$. We remark that the indices $\alpha$ and $b$ will not make appearances in the product of Green function entries, but we still keep them in the notation in order to emphasize the inheritance from the original form in~(\ref{form_ab}).

\end{definition}

The next proposition claims that, in expectation, any type-$\alpha b$ term of degree $d$ can be expanded into a linear combination of type-0 terms of degrees at least $d$ up to an error $O_{\prec}(N^{-1}+\Psi^D)$, for any $D\geq d+1$. 
\begin{proposition}\label{lemma_expand_type}
Consider any type-$\alpha b$ term $P_d^{\alpha b} \in \mathcal{P}^{\alpha b}_d$ of the form in (\ref{form_ab}) of degree $d$, with fixed $n \in \N$ given in (\ref{n_number}) and $d^o \in \N$ given in (\ref{d_o}). Then for any fixed integer $D \geq d+1$, we have
\begin{align}\label{reduce}
\E [  P_d^{\alpha b}(t, z)]=\sum_{\substack{P_{d'} \in \mathcal{P}_{d'} \\ d \leq d' < D}} \E [ P_{d'}(t,z)]+O_{\prec}\Big(\frac{1}{N}+\Psi^D\Big),
\end{align}
uniformly in $t\in \R^+$ and $z \in S$ given in (\ref{ddd}), where the sum above contains at most $(32(n+4D))^{2D}$ type-0 terms of the form in (\ref{form_ab}) of degrees $d'$ satisfying $d \leq d' < D$. Moreover, the number of the Green function entries in the product in each type-0 term, is bounded by $n+4D$, and the number of off-diagonal Green function entries in each term is at least $d^o$.
\end{proposition}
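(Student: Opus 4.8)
\textbf{Proof proposal for Proposition~\ref{lemma_expand_type}.}

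The plan is to repeatedly apply the cumulant expansion formula in Lemma~\ref{cumulant} in order to eliminate the two distinguished indices $\alpha$ and $b$ from the product of Green function entries, one ``resolvent edge'' at a time, and to show that the error accumulated in this process is $O_\prec(N^{-1}+\Psi^D)$. Since $P_d^{\alpha b}$ is a type-$\alpha b$ term, the index $b$ appears exactly four times as a row or column index (with $G_{bb}$ possibly among the factors) and $\alpha$ appears exactly four times. The starting observation is that in the averaged sum $\frac{1}{N}\sum_{b=1}^N$, any factor carrying the index $b$ in an off-diagonal position can be rewritten using the identity $h_{\alpha b}G_{\alpha \mathfrak j} = -(\text{terms from }\partial_{h_{\alpha b}})$, or more precisely: one isolates a single matrix entry $h_{\alpha b}$ by writing (via $(HG)_{\mathfrak i\mathfrak j}=\delta_{\mathfrak i\mathfrak j}$ and the block structure of $H$ in~(\ref{block})) a diagonal factor like $G_{bb}$ or $zG_{\alpha\alpha}$ in terms of $\sum_\beta h_{\beta b}G_{\beta b}$ and similar sums, and then applies the cumulant expansion in the entry $h_{\alpha b}$. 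Using the differentiation rule~(\ref{dH}), each application of Lemma~\ref{cumulant} replaces one copy of $h_{\alpha b}$ by a sum of products of Green function entries weighted by cumulants $s^{(p+1)}_{\alpha b}/N^{(p+1)/2}$; the second-order term ($p+1=2$) reproduces structurally similar terms with one fewer loose $h_{\alpha b}$, the third-order term produces \emph{unmatched} terms (since it changes the parity of $\nn(\alpha)$ and $\nn(b)$ simultaneously), and the fourth- and higher-order terms are numerically small. Iterating, each step either reduces the number of appearances of $\alpha$ and $b$, or produces an unmatched remainder, or produces a genuinely small error controlled by~(\ref{localaw_0}) together with $\Psi\gg N^{-1+\epsilon/2}$.

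Concretely I would organize the argument as a finite induction on $\nn(\alpha)+\nn(b)$ (which starts at $8$). At each step: (i) pick an off-diagonal factor containing $b$ (or, if only $G_{bb}$ remains, expand $G_{bb}$ via the resolvent identity into $\sum_\beta h_{\beta b}G_{\beta b}$, of which only the $\beta=\alpha$ term is retained after the cumulant expansion produces cancellation of the $\beta\ne\alpha$ diagonal terms against $-z^{-1}$-type corrections — here one uses that in the full sum $\frac1N\sum_b$ the diagonal $G_{bb}$ is deterministically close to $\wt m$ by~(\ref{G}), so replacing it by $\wt m$ only costs a factor $\Psi$ in degree, i.e. it \emph{increases} the degree, which is fine); (ii) apply Lemma~\ref{cumulant} in $h_{\alpha b}$ up to order $l=D$ (say), noting $s^{(2)}_{\alpha b}=1/N$ and using~(\ref{dH}); (iii) classify the resulting terms: the $p+1=2$ term stays a sum of products of Green function entries with the index count of $\alpha,b$ reduced by two but the degree unchanged or increased (since the new off-diagonal factors $G_{\mathfrak i\alpha}G_{b\mathfrak j}$ typically raise $d^o$); the $p+1=3$ terms are unmatched (odd parity in both $\alpha$ and $b$), so by Proposition~\ref{unmatch_lemma} their expectation is $O_\prec(N^{-1}+\Psi^D)$ after we expand \emph{them} further — but to stay within the stated bound we simply peel them off and bound them directly using the fact that $s^{(3)}_{\alpha b}/N^{3/2}$ carries an extra $N^{-1/2}$, combined with~(\ref{localaw_0}) and the gain from Proposition~\ref{unmatch_lemma}; the $p+1\ge 4$ terms carry $N^{-(p+1)/2}\le N^{-2}$ times a sum over $\alpha,b$ of size $\lesssim N^2$ times $\Psi^{\text{(current degree)}}$, hence are $O_\prec(\Psi^D)$ if the current degree already exceeds $D$, or else are retained and reduced further. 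After at most a bounded number (depending only on $n$ and $D$) of iterations, $\alpha$ and $b$ no longer appear in any retained product, leaving exactly the type-0 terms of~(\ref{form_ab}), with degree $\ge d$ by construction (each expansion step never decreases the degree: replacing a diagonal factor by $\wt m$ costs degree $1$, and replacing $G_{bb}$ or $G_{\alpha\alpha}$ by two off-diagonal factors also does not decrease $d^o$); the truncation at degree $D$ is what produces the explicit upper cutoff $d'<D$. The bookkeeping of how many terms are generated — each cumulant expansion produces at most a constant times $(n+4D)$ terms from the derivatives in~(\ref{dH}), iterated $O(D)$ times — gives the stated bound $(32(n+4D))^{2D}$, and the bound $n+4D$ on the number of Green function factors follows since each step adds at most $2$ off-diagonal factors plus at most $2$ from resolvent-identity expansions per removed index, over $\le 2D$ steps; the ``at least $d^o$ off-diagonal factors'' claim follows because no step ever destroys an off-diagonal factor not involving $\alpha$ or $b$.

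The main obstacle, and where the real work lies, is step (iii)'s classification together with ensuring the degree never decreases. The delicate point is the treatment of the diagonal factors $G_{bb}$ and $G_{\alpha\alpha}$: naively differentiating them via~(\ref{dH}) can produce off-diagonal factors, but one must also \emph{create} the $h_{\alpha b}$ to expand against when no off-diagonal $b$-factor is present, which forces the resolvent-identity trick $G_{bb}=-1 - \sum_{\mathfrak k} h_{b\mathfrak k}G_{\mathfrak k b}$ (from the block structure, $G_{bb}$ satisfies $-zG_{bb}+\sum_\beta X^*_{b\beta}G_{\beta b}=1$, i.e. $G_{bb}=z^{-1}(\sum_\beta X^*_{b\beta}G_{\beta b}-1)$), and then arguing that the ``wrong'' terms (those with $\beta\ne\alpha$) either combine into $\ud\G$-type averages close to their deterministic value — hence absorbed into higher-degree type-0 terms via~(\ref{G}) — or vanish by symmetry. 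Getting this replacement to \emph{strictly} preserve or increase the degree, uniformly over all the combinatorial cases of which of the four $b$-slots and four $\alpha$-slots are diagonal versus off-diagonal, is the technical heart; I expect to need a short lemma stating that $\frac1N\sum_b G_{bb}(\cdots) = \wt m\,\frac1N\sum_b(\cdots) + (\text{degree }+1\text{ term})$ and its analogue for $\alpha$, applied repeatedly. Everything else — the cumulant bookkeeping, the unmatched-term estimate, and the final power counting via~(\ref{localaw_0}) with $\Psi^D\le N^{-1}$ for $D$ large — is routine once this degree-monotonicity is nailed down.
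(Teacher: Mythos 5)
Your high-level plan is in the same family as the paper's argument (resolvent identity plus cumulant expansion, unmatched third-order terms handled via Proposition~\ref{unmatch_lemma}, truncation at degree $D$, and a counting of generated terms), and you correctly isolate the crucial sub-lemma: that a diagonal factor $G_{bb}$ or $G_{\alpha\alpha}$ can be replaced by its deterministic value at the cost of only higher-degree corrections. But the mechanism you propose for proving that sub-lemma is not the right one, and this is a genuine gap rather than a routine detail. The correct procedure (the paper's Cases $1\alpha$/$1b$ in Section~\ref{sec:expand}) is to write $G_{\alpha\alpha}=\sum_{k}H_{\alpha k}G_{k\alpha}-1$ and then apply the cumulant expansion of Lemma~\ref{cumulant} in \emph{every} entry $h_{\alpha k}$ of the \emph{fresh} summation index $k$ running over $\llbracket 1,N\rrbracket$. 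The second-order cumulant term then produces the full normalized trace, $-\E[\ud G\,Q_d]=-\wt m\,\E[Q_d]-\E[(\ud G-\wt m)Q_d]$, and the deterministic piece $-\wt m\,\E[Q_d]$ is moved to the left-hand side and absorbed using $1+\wt m\sim 1$ (respectively $z-\varrho/(1+\wt m)=-1/\wt m$ via the Marchenko--Pastur equation~(\ref{mplawgamma-1}) for the $b$-index). Your version instead keeps only the $\beta=\alpha$ term of $\sum_\beta h_{\beta b}G_{\beta b}$ and asserts that the $\beta\neq\alpha$ contributions cancel against ``$-z^{-1}$-type corrections''; no such cancellation occurs, and expanding only in the single entry $h_{\alpha b}$ leaves a sum of raw matrix entries $h_{\beta b}$ that the local-law machinery cannot absorb. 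Without the self-consistent absorption over the fresh index, the recursion does not close. Relatedly, in the paper the third-order terms are unmatched because the fresh index satisfies $\nn(k)=3$, not because of a parity flip in $\nn(\alpha)$ and $\nn(b)$.

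The second missing ingredient is the purely off-diagonal case. A type-$\alpha b$ term need not contain $G_{\alpha\alpha}$ or $G_{bb}$ at all ($\nn(\alpha)=\nn^o(\alpha)=4$ is allowed), and your sketch only treats the diagonal situation. In that case the leading degree-$d$ contributions of the expansion arise when the derivative $\partial/\partial h_{\alpha k}$ hits a \emph{second} off-diagonal factor carrying $\alpha$: the pair of $\alpha$'s from $G_{\alpha y_1}$ and $G_{\alpha y_i}$ is replaced by the fresh index $k$, at the price of creating a new $G_{\alpha\alpha}$ that must itself be re-expanded (the paper's~(\ref{example_1})--(\ref{case_2a})). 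This step is what actually reduces $\nn^o(\alpha)$ by two while keeping the degree fixed, and it is the source of the ``three leading type-$b$ terms'' in~(\ref{alpha}); it cannot be bypassed by the diagonal-replacement lemma alone. Once these two points are repaired --- expansion over the fresh index with self-consistent absorption, and the off-diagonal pairing mechanism --- your induction on $\nn(\alpha)+\nn(b)$, the unmatched-term estimates, and the combinatorial bookkeeping all go through essentially as you describe.
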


Returning to the right side of (\ref{step1}), the resulting terms by (\ref{dH}) are finitely many type-$\alpha b$ terms of the form in (\ref{form_ab}) of degrees $d \geq 2$, with $n=5$ and $d^o \geq 2$ and $\nn(\alpha)=\nn(b)=4$. Using Proposition \ref{lemma_expand_type}, we can expand these type-$\alpha b$ terms into type-0 terms of degrees at least two, \ie for any fixed $D \geq 3$, 
\begin{align}\label{final}
\frac{\dd }{\dd t}\E [  m_N(t,z)]=\sum_{ \substack{P_{d} \in \mathcal{P}_{d} \\ 2 \leq d \leq D-1}} \E [ P_d(t, z) ]+O_{\prec}\Big(\frac{1}{\sqrt{N}}+\Psi^D\Big)\,,
\end{align}
uniformly in $z \in S$ and $t\in \R^+$, 
where the sum above contains at most $(CD)^{cD}$ type-0 terms of the form in (\ref{form_ab}) for some numerical constants $C,c>0$, the number of Green function entries in each term is bounded by $CD$, and the number of off-diagonal Green function entries in each term is at least two.

\subsection{Size of type-0 terms}

We next estimate the size of an arbitrary type-0 term of the form in (\ref{form}) of degree $d\geq 2$, with the number of the off-diagonal Green function entries in the product, $d^o$, at least two, in the edge scaling, \ie when the spectral parameter $z$ is chosen in the domain $S_{\mathrm{edge}}$ given by~\eqref{S_edge}. 
\begin{proposition}\label{lemma_trace_sample}
Consider any type-0 term $P_d \in \mathcal{P}_d$ of the form in (\ref{form_ab}) of degree $d \geq 2$, with fixed $n$ given in (\ref{n_number}) and $d^o \geq 2$ given in (\ref{d_o}). Then we have the estimate
\begin{align}\label{tracek_sample}
|\E [P_d(t,z)]| =O_{\prec}(N^{-1/3-\epsilon}),
\end{align}
uniformly in $t \in \R^+$ and $z \in S_{\mathrm{edge}}$. 
\end{proposition}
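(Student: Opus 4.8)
\textbf{Proof proposal for Proposition~\ref{lemma_trace_sample}.}
The plan is to bound the expectation of a type-0 term $P_d$ of degree $d\ge 2$ with $d^o\ge 2$ off-diagonal Green function entries. Recall that a type-0 term contains no diagonal entries, so every one of its $n^o$ factors $G_{x_iy_i}$ is off-diagonal and carries two \emph{distinct} free summation indices from $\mathcal{I}$, with each index $\mathfrak{v}_j\in\mathcal{I}$ appearing exactly twice (i.e.\ $\nn(\mathfrak{v}_j)=2$), plus possibly powers of the centered traces $\ud G-\wt m$ and $\ud\G+\frac1{1+\wt m}$. The naive bound from the local law~\eqref{localaw_0} gives only $|P_d|\prec \Psi^d+N^{-1}$, and on $S_{\mathrm{edge}}$ one has $\Psi\sim\sqrt{\Im\wt m/(N\eta)}\sim N^{-1/3}(N\eta)^{-1/2}$ (using $\Im\wt m\sim\sqrt{\kappa+\eta}\sim N^{-1/3}$ at the edge), which for $\eta=N^{-1+\epsilon}$ is only of size $N^{-1/3-\epsilon/2}$ to the power $d$ — not quite enough when $d=2$ since we also lose the extra sum over $\alpha,b$ hidden in the $N^{2+\#\mathcal I}$ normalization. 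The improvement must come from exploiting the off-diagonal structure via Ward-type identities: since each index is matched (appears exactly twice), we can iteratively sum over the free indices using the generalized Ward identities~\eqref{ward}, which convert $\sum_{\mathfrak k}|G_{b\mathfrak k}|^2=\eta^{-1}\Im G_{bb}$ and its block analogues, gaining a factor $\Im\wt m/(N\eta)$ rather than merely $\Psi$ per contracted pair.

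First I would set up the index-summation estimate: order the free indices $\mathfrak v_1,\dots,\mathfrak v_m$ and peel them off one at a time. For a leaf index appearing in exactly two off-diagonal factors $G_{\cdot\mathfrak v_j}$ and $G_{\mathfrak v_j\cdot}$, apply Cauchy--Schwarz in $\mathfrak v_j$ and then the Ward identity~\eqref{ward} (together with $\|X^*X\|_2\prec 1$ from rigidity~\eqref{rigidity1} to handle the mixed block sums) to bound $\frac1N\sum_{\mathfrak v_j}|G_{x\mathfrak v_j}G_{\mathfrak v_j y}|$ by $(\Im\wt m/(N\eta))^{1/2}$ times a residual product of entries with one fewer index; each such step reduces the off-diagonal count appropriately and produces a factor $\Psi$ that is genuinely of Ward size, i.e.\ effectively $\sqrt{\Im\wt m/(N\eta)}$. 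Iterating over all of $\mathcal I$ and the two singled-out summations (which however are already accounted for in type-0 terms by $\nn(\alpha)=\nn(b)=0$, so $\alpha,b$ do not actually appear — the sums $\sum_\alpha\sum_b$ in~\eqref{form_ab} contribute only trivial factors $M N$ cancelled by $N^{-2}$), one arrives at the bound
\begin{align}
|\E[P_d(t,z)]|\prec \Big(\frac{\Im\wt m(z)}{N\eta}\Big)^{d^o/2}+\frac1N,\nonumber
\end{align}
where I have also used that each centered trace factor $\ud G-\wt m$ or $\ud\G+(1+\wt m)^{-1}$ is $O_\prec(\frac1{N\eta})$ by~\eqref{G_average}, hence at least as small as $\Psi$. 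On $S_{\mathrm{edge}}$ we have $N\eta\ge N^{\epsilon}$ and $\Im\wt m\sim\sqrt{\kappa+\eta}\lesssim N^{-1/3+\epsilon/2}$, so $\Im\wt m/(N\eta)\lesssim N^{-1/3-\epsilon/2}$, and with $d^o\ge 2$ this yields $|\E[P_d]|\prec N^{-1/3-\epsilon}$ after absorbing the $\epsilon$-powers, as claimed.

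The main obstacle I anticipate is the careful bookkeeping of the Ward-identity peeling in the presence of the two matrix blocks: the Green function $G$ in~\eqref{blockG} has the $2\times2$ block structure $R, X^*\mathcal R, XR, z\mathcal R$, and the four identities in~\eqref{ward} behave differently depending on whether the contracted index lies in $\llbracket1,N\rrbracket$ or $\llbracket N+1,N+M\rrbracket$ and whether the external indices are Latin or Greek; moreover some of the mixed-block sums only close with an extra factor $\|X^*X\|_2\prec1$, so one must invoke rigidity to control that. A secondary subtlety is ensuring that when an index is \emph{not} a leaf — i.e.\ when the ``contraction graph'' on $\mathcal I$ has cycles — one still extracts the right power of $\Psi$; here the standard trick is that a cycle of length $\ell$ can be broken by bounding all but two of its off-diagonal entries by $\|G\|_{\max}\prec\Psi$ directly and Ward-summing the remaining pair, which suffices because $d^o\ge2$ guarantees at least two genuinely off-diagonal factors survive. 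One must also verify the error term $N^{-1}$ (from $\Psi^d\ge N^{-1}$ boundary cases and from the $1/(N\eta)$ tails) is consistent with the target $N^{-1/3-\epsilon}$, which it is for $\eta\le N^{-2/3-\epsilon}$ since then $N^{-1}\ll N^{-1/3-\epsilon}$. Throughout, stochastic domination of expectations is justified by Lemma~\ref{dominant_prop}(3) and the deterministic bound~\eqref{deter_bound}.
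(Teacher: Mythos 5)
Your approach has a genuine gap that makes it circular. The Ward identity \eqref{ward} produces the \emph{random} quantity $\Im G_{xx}/\eta$ (equivalently, after averaging, $\Im m_N/(N\eta)$ as in \eqref{ward_id}), not the deterministic $\Im\wt m/(N\eta)$ that you write down. To replace $\Im G_{xx}$ by $\Im\wt m$ you would need the local law at precision better than $\Psi$; but on $S_{\mathrm{edge}}$, where $\eta$ may be as small as $N^{-1+\epsilon}$, one has $\Psi\gtrsim (N\eta)^{-1}=N^{-\epsilon}$, which is far larger than $\Im\wt m\lesssim N^{-1/3+\epsilon/2}$, so the fluctuation of $G_{xx}$ completely dominates its deterministic imaginary part and the substitution is not justified. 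What the Ward-identity reduction actually yields is $|\E[P_d(t,z)]|\prec \E[\Im m_N(t,z)]/(N\eta)+N^{-1}$, and to conclude you would need $\E[\Im m_N(t,z)]=O(N^{-1/3})$ for the general matrix $X(t)$ at all times $t$. That estimate is precisely \eqref{img_sample}, which in the paper is a \emph{consequence} of Proposition \ref{lemma_trace_sample} (via the comparison \eqref{compare_mn}); the local law \eqref{G_average} alone only gives $\Im m_N\prec \Im\wt m+(N\eta)^{-1}\prec N^{-\epsilon}$ in this regime, which would produce the useless bound $N^{-2\epsilon}$.

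The missing idea is the recursive Green function comparison. The paper establishes the sharp bound $\E^{W}[\Im m]\le CN^{-1/3}$ only at the Wishart endpoint, using the Laguerre correlation-kernel asymptotics (Lemma \ref{lemma_trace}); your Ward-identity reduction is essentially the step used there to deduce \eqref{tracek} for the Gaussian case. To transfer the bound to $X(t)$, the paper differentiates $\E[P^{(s)}_{d_s}]$ along the interpolating flow, performs cumulant expansions, disposes of the third-order terms as unmatched (Proposition \ref{unmatch_lemma}), expands the fourth-order terms into type-0 terms of strictly higher degree (Proposition \ref{lemma_expand_type}), integrates up to $T=8\log N$ where the matrix is essentially Gaussian, and iterates this comparison until the degree reaches a large $D$ where naive power counting suffices. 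Without this bootstrapping from the Gaussian endpoint, your argument assumes the conclusion it is trying to prove.
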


Integrating (\ref{final}) over $ [t_0, T]$ with $T:=8 \log N$ and $0 \leq t_0\leq T$, and combining with the estimates in Proposition \ref{lemma_trace_sample}, we find that
\begin{align}\label{integrate_imn}
\E[m_N(T,z)]-\E[m_N(t_0,z)]=&\sum_{ \substack{ P_{d} \in \mathcal{P}_d \\2\leq d < D}} \int_{t_0}^{T} \E[ P_d(t,z)] \dd t +O_{\prec}\big(\log N (\frac{1}{\sqrt{N}}+\Psi^D)\big)=O_{\prec}(N^{-\frac{1}{3}-\epsilon})\,,
\end{align}
by choosing $D$ sufficiently large, depending on $\epsilon$. For $t \geq 8 \log N$, from (\ref{dyson_flow}) and (\ref{blockG}), $G(t,z)$ is close to the $G^{W}:=(H^{W})^{-1}$ with $H^{W}$ defined as in (\ref{block}) by replacing $X(t)$ with the Gaussian matrix $W$, \ie
\begin{align}\label{approxxxx}
\|G(t,z) -G^{\mathrm{W}}(z)\|_{\mathrm{max}} \leq &  \|G(t,z) -G^{\mathrm{W}}(z)\|_{\mathrm{2}} \leq \|G(t,z)(H^{W}(z)-H(t,z))G^{\mathrm{W}}(z)\|_2 \nonumber\\
&\leq \frac{C{N}}{\eta^2} \|H^{W}(z)-H(t,z))\|_{\mathrm{max}}  \prec \frac{1}{N^3\eta^2},
\end{align}
where in the first step we used the matrix norm inequality $\|A\|_{\mathrm{max}} \leq \|A\|_{2} $ for $A\in \C^{m \times n}$, in the second step we used the second resolvent identity, and in the third step we used the deterministic bound of $\|G\|_2$ in (\ref{deter_bound}) and that $\|A\|_{2} \leq \sqrt{m n}\|A\|_{\mathrm{max}} $ for $A\in \C^{m \times n}$. Thus we have
$$\big| \E[m_N(T,z)] -\E^{W}[m_N(z)] \big|=O_{\prec}(N^{-1}).$$
Combining with the estimate in (\ref{integrate_imn}), we complete the proof of the comparison in (\ref{compare_mn}). The estimate in (\ref{img_sample}) hence follows from the comparison in (\ref{compare_mn}) and the corresponding estimate (\ref{traceimg}) of Lemma \ref{lemma_trace} for white Wishart ensemble shown below.

\section{Proof of Proposition~\ref{unmatch_lemma} and~\ref{lemma_expand_type}}\label{sec:expand}

Before giving the proofs of Proposition~\ref{unmatch_lemma} and Proposition~\ref{lemma_expand_type}, we first introduce the expansion mechanism applied to an averaged product of Green function entries of the form in (\ref{form}).

Recall that we use $\mathcal{I}_N$ to denote the summation indices that take values in $\llbracket 1, N \rrbracket$ and we use Latin letters to denote the indices in $\mathcal{I}_N$.  We use $\mathcal{I}_M$ to denote  summation indices that take values in $\llbracket N+1, N+M \rrbracket$ and we use Greek letters to denote the elements in $\mathcal{I}_M$. We set $\mathcal{I}=\mathcal{I}_N \cup \mathcal{I}_M$ and use the calligraphic letters, \eg $\mathfrak{i}, \mathfrak{j}$, $\mathfrak{v}$, to denote the summation indices ranging from $1$ to $N+M$. 

Using the definition of the Green function $G(t,z)$ in (\ref{blockG}) and that $X(t)$ in (\ref{block}) is real-valued, we have the following symmetry in the indices for the Green function 
\begin{align}\label{symmetry}
G_{\mathfrak{ij}}(t,z)=G_{\mathfrak{ji}}(t,z), \qquad  \mathfrak{i}, \mathfrak{j} \in \llbracket 1, N+M \rrbracket.
\end{align}
For convenience, we will always put the index used for an expansion in the row position of the Green function entries. In combination with (\ref{block}) and (\ref{le time dependent G}), we obtain the useful identities
\begin{align}\label{id_3}
G_{\alpha \mathfrak{i}}= \sum_{k=1}^{N} H_{\alpha k} G_{k \mathfrak{i}}-\delta_{\alpha \mathfrak{i}}, ~ \alpha \in \llbracket N+1, N+M \rrbracket; \qquad zG_{b \mathfrak{i}}=\sum_{\gamma=N+1}^{N+M} H_{b \gamma} G_{\gamma \mathfrak{i}}-\delta_{b\mathfrak{i}}, ~ b \in \llbracket 1, N \rrbracket.
\end{align}
In these formulas, we will refer to $k$ and $\gamma$ as fresh summation indices, and the spectral parameter $z$ in this section will always be chosen in the domain~$S$ given in (\ref{ddd}).

Now, we are ready to introduce the mechanism to expand the averaged product of Green function entries in (\ref{form}) by combining the identities in~(\ref{id_3}) and the cumulant expansion formulas in Lemma \ref{cumulant}.

\subsection{Expansion mechanism I}\label{sec:expand_1}

In this subsection, we expand a Green function entry in a term in~(\ref{form}) using indices taking values in $\llbracket N+1, N+M \rrbracket$. Corresponding expansions using indices in $\llbracket 1, N \rrbracket$ are given in the next subsection.

Consider an averaged product of Green function entries of the form in (\ref{form}) of degree $d$, denoted by $Q_d \in \mathcal{Q}_d$. Let $n$, defined in (\ref{n_number}), be the total number of Green function entries in the product and $d^o$, given in~(\ref{d_o}), be the number of off-diagonal Green function entries in the product. Recall from (\ref{nu_number}) that the number of appearances of a free summation index $\mathfrak{v}_j \in \mathcal{I}=\mathcal{I}_N \cup \mathcal{I}_M$ as the row or column index in the product of Green function entries is denoted by $\nn(\mathfrak{v}_j)$. We further define the number of appearances of the index $\mathfrak{v}_j$ as the row or column index of off-diagonal Green function factors in the product as
\begin{align}\label{nu_number_0}
\nn^o(\mathfrak{v}_j):=&\#\{ 1 \leq i \leq n^o: x_i=\mathfrak{v}_j, y_i \neq \mathfrak{v}_j \mbox{~or~}x_i\neq \mathfrak{v}_j, y_i=\mathfrak{v}_j \}.
\end{align}
We remark that $\nn(\mathfrak{v}_j)$ and $\nn^o(\mathfrak{v}_j)$ are either simultaneously even or odd.

Let $\alpha \in \mathcal{I}_M$ be a summation index in $Q_d$, with $\nn(\alpha) \geq 1$. Then we split the discussion in two cases.

{\bf Case 1$\alpha$: Eliminating two $\alpha$'s from $G_{\alpha \alpha}$.}  In this first case, we expand a diagonal Green function entry $G_{\alpha \alpha}$ in a product of the form (\ref{form}). We may assume $G_{x_1y_1}=G_{\alpha \alpha}$. Using the first identity in~(\ref{id_3}) on $G_{\alpha \alpha}$ and applying the cumulant expansions in Lemma \ref{cumulant} on the resulting $\{H_{\alpha k}\}$, we have
\begin{align}\label{temp_1}
\E[Q_d(t,z)]=&\frac{1}{N^{\#\mathcal{I}} }\sum_{\mathcal{I}} c_{\mathcal{I}}\E \Big[  G_{\alpha \alpha}\Big(\prod^{n^o}_{i=2} G_{x_i y_i}\Big) \Big(\ud{G}-\wt m\Big)^{n^g} \Big( \ud{\G}+\frac{1}{1+\wt m}\Big)^{n^{\mathfrak{g}}}\Big]\nonumber\\
=&\frac{1}{N^{\#\mathcal{I}} }\sum_{\mathcal{I}} c_{\mathcal{I}} \sum_{k=1}^N \E \Big[  H_{\alpha k}G_{k \alpha}  \Big(\prod^{n^o}_{i=2} G_{x_i y_i}\Big) \Big(\ud{G}-\wt m\Big)^{n^g} \Big( \ud{\G}+\frac{1}{1+\wt m}\Big)^{n^{\mathfrak{g}}}\Big]\nonumber\\
&-\frac{1}{N^{\#\mathcal{I}} }\sum_{\mathcal{I}} c_{\mathcal{I}}  \E \Big[ \Big(\prod^{n^o}_{i=2} G_{x_i y_i}\Big) \Big(\ud{G}-\wt m\Big)^{n^g} \Big( \ud{\G}+\frac{1}{1+\wt m}\Big)^{n^{\mathfrak{g}}}\Big]\nonumber\\
=&\frac{1}{N^{1+\#\mathcal{I}} }\sum_{\mathcal{I},k} c_{\mathcal{I}} \E \bigg[  \frac{ \partial G_{k \alpha}  \Big(\prod^{n^o}_{i=2} G_{x_i y_i}\Big) \Big(\ud{G}-\wt m\Big)^{n^g} \Big( \ud{\G}+\frac{1}{1+\wt m}\Big)^{n^{\mathfrak{g}}}}{\partial h_{\alpha k}}\bigg]\nonumber\\
&+ \frac{1}{\sqrt{N} }\frac{1}{2 N^{1+\#\mathcal{I}} }\sum_{\mathcal{I},k} c_{\mathcal{I}} s^{(3)}_{\alpha k}(t) \E \bigg[  \frac{ \partial^2 G_{k \alpha}  \Big(\prod^{n^o}_{i=2} G_{x_i y_i}\Big) \Big(\ud{G}-\wt m\Big)^{n^g} \Big( \ud{\G}+\frac{1}{1+\wt m}\Big)^{n^{\mathfrak{g}}}}{\partial h^2_{\alpha k}}\bigg]\nonumber\\
&-\frac{1}{N^{\#\mathcal{I}} }\sum_{\mathcal{I}} c_{\mathcal{I}} \E \Big[ \Big(\prod^{n^o}_{i=2} G_{x_i y_i}\Big) \Big(\ud{G}-\wt m\Big)^{n^g} \Big( \ud{\G}+\frac{1}{1+\wt m}\Big)^{n^{\mathfrak{g}}}\Big]+O_{\prec}(N^{-1}),
\end{align}
uniformly for any $t \in \R^+$ and $z \in S$ given in (\ref{ddd}), where $\{s^{(3)}_{\alpha k}(t)\}$ are the third order cumulants of the normalized matrix entries $\sqrt{N} h_{\alpha k}(t)$ given in (\ref{t_cumu}), and the error $O_{\prec}(N^{-1})$ stems from truncating the cumulant expansions at the third order.

Using the differentiation rule in (\ref{dH}), the third order terms in the cumulant expansions above can be written as at most $4n(n+1)$ terms of the form in (\ref{form}) with an additional factor $\frac{1}{\sqrt{N}}$ in front, where the new summation index set $\mathcal{I}'=\{\mathcal{I},k\}$, $n'=n+2$, and the new coefficients $c_{\mathcal{I}'}=\frac{1}{2}c_{\mathcal{I}} s^{(3)}_{\alpha k}$ are uniformly bounded. Most importantly, the number of appearances of the fresh index $k$ in the product of Green function entries is $\nn(k)=3$. Thus these third order terms are unmatched terms under Definition \ref{unmatch_def}. Since $k$ is a fresh index, the degrees of these terms, $d'$, satisfy $d' \geq d$. We use 
\begin{align}\label{higher}
\frac{1}{\sqrt{N}} \sum_{Q^o_{d'} \in \mathcal{Q}^o_{d'}; d' \geq d} \E [Q^o_{d'}],
\end{align}
to denote the finite sum of these unmatched terms from the third order expansion.

We next look at the second order terms in the cumulant expansions, \ie the first group of terms on the right side of (\ref{temp_1}). Using (\ref{dH}), the resulting $2n$ terms  are also of the form in (\ref{form}), with the new summation index set $\mathcal{I}'=\{\mathcal{I},k\}$, $n'=n+1$, and the new coefficients $c_{\mathcal{I}'} \equiv -c_{\mathcal{I}}$. It is straightforward to check that the fresh index $k$ has the number of appearances $\nn(k)=2$ and the number of appearances of any original summation index in $\mathcal{I}$ (including the index $\alpha$) remains the same as that of the original term~$Q_d$.  Among them, the leading term of degree $d$ comes from $\frac{\partial}{\partial h_{\alpha k}}$ acting on $G_{k\alpha}$, \ie
\begin{align}\label{temp_2}
-\frac{1}{N^{1+\#\mathcal{I}} }\sum_{\mathcal{I}} \sum_{k=1}^N c_{\mathcal{I}} \E \Big[ G_{kk} G_{\alpha \alpha}\Big(\prod^{n^o}_{i=2} G_{x_i y_i}\Big) \Big(\ud{G}-\wt m\Big)^{n^g} \Big( \ud{\G}+\frac{1}{1+\wt m}\Big)^{n^{\mathfrak{g}}}\Big]\nonumber\\
=-\E[\ud{G} Q_d]=-\wt m \E[Q_d]-\E[(\ud{G}-\wt m) Q_d],
\end{align}
where $\wt m$ is the deterministic function defined in (\ref{mplawgamma-1}). The first term on the right side of (\ref{temp_2}) can be absorbed into the left side of (\ref{temp_1}) by considering $(1+\wt m) \E[Q_d]$. In addition, the second term on the right side of (\ref{temp_2}) gains an additional centered factor $\ud{G}-\wt m$. Thus its degree is increased to $d+1$, and the number of off-diagonal Green function entries in the product remains the same as $d^o$.

We next discuss the remaining $2n-1$ terms from the first group on the right side of (\ref{temp_1}), whose degrees are higher than $d$. Since $k$ is a fresh index, the terms from taking $\frac{\partial}{\partial h_{\alpha k}}$ on $G_{k\alpha}\prod_{i=2}^{n^o}G_{x_i y_i}$, except the one in (\ref{temp_2}), have degrees $d' \geq d+1$ and the number of off-diagonal Green function entries in each resulting term is also increased by at least one, \ie $(d^o)' \geq d^o+1$. Else if $\frac{\partial}{\partial h_{\alpha j}}$ acts on the centered factors $(G-\wt m)^{n^g}$ or $\Big( \ud{\G}+\frac{1}{1+\wt m}\Big)^{n^{\mathfrak{g}}}$, then we obtain the following two terms
\begin{align}\label{temp_3}
-\frac{2}{N^{2+\#\mathcal{I}} }\sum_{\mathcal{I},k,v} c_{\mathcal{I}}   G_{k \alpha} G_{v k} G_{ \alpha v }  \Big(\prod^{n^o}_{i=2} G_{x_i y_i}\Big) \Big(\ud{G}-\wt m\Big)^{n^g-1} \Big( \ud{\G}+\frac{1}{1+\wt m}\Big)^{n^{\mathfrak{g}}},\nonumber\\
-\frac{2}{ N^{2+\#\mathcal{I}} }\sum_{\mathcal{I},k,\nu} \frac{c_{\mathcal{I}} }{\varrho}  G_{k \alpha} G_{\nu k} G_{ \alpha \nu }  \Big(\prod^{n^o}_{i=2} G_{x_i y_i}\Big) \Big(\ud{G}-\wt m\Big)^{n^g} \Big( \ud{\G}+\frac{1}{1+\wt m}\Big)^{n^{\mathfrak{g}}-1},
\end{align}
with the aspect ratio $\varrho$ given in (\ref{ratio}), which stems from the definition of $\ud \G$ in (\ref{ggu}). Compared with the original term $Q_d$, the degrees of these two terms are increased by two, \ie $d'=d+2$, and the number of off-diagonal Green function entries is increased by three, \ie $(d^o)' = d^o+3$. We remark that for the above two terms, we have created additional free summation indices $v \in \llbracket 1, N \rrbracket$ and $\nu \in \llbracket N+1, N+M \rrbracket$ with $\nn(v)=\nn(\nu)=2$, which comes from taking $\frac{\partial}{\partial h_{\alpha k}}$ on $\ud{G}$ and $\ud{\G}$ given in (\ref{ggu}). To avoid confusion, we will not record these fresh summation indices created in this way in the notation. 

For short, we denote the above $2n-1$ terms of degrees at least $d+1$, together with the second term on the right side of (\ref{temp_2}) of degree $d+1$, by
\begin{align}\label{second}
 \sum_{ \substack{Q_{d'} \in \mathcal{Q}_{d'}  \\ d' \geq d+1 }} \E[Q_{d'}].
\end{align}

Therefore, after moving the leading term in (\ref{temp_2}) to the left side of (\ref{temp_1}) and dividing both sides of (\ref{temp_1}) by $1+\wt m \sim 1$ (see (\ref{1+tm})), together with the shorthand notations in (\ref{higher}) and (\ref{second}), we obtain that
\begin{align}\label{temp_4}
\E[Q_d(t,z)]=&- \frac{1}{1+\wt m} \frac{1}{N^{\#\mathcal{I}} }\sum_{\mathcal{I}} c_{\mathcal{I}} \E \Big[ \Big(\prod^{n^o}_{i=2} G_{x_i y_i}\Big) \Big(\ud{G}-\wt m\Big)^{n^g} \Big( \ud{\G}+\frac{1}{1+\wt m}\Big)^{n^{\mathfrak{g}}}\Big]\nonumber\\
&+\frac{1}{1+\wt m}  \sum_{ \substack{Q_{d'} \in \mathcal{Q}_{d'}  \\ d' \geq d+1 }} \E[Q_{d'}]+\frac{1}{1+\wt m} \frac{1}{\sqrt{N}} \sum_{ \substack{Q^o_{d'} \in \mathcal{Q}^o_{d'}\\ d' \geq d}} \E [Q^o_{d'}]+O_{\prec}(N^{-1}),
\end{align}
uniformly for any $t \in \R^+$ and $z \in S$, where the first term of degree $d$ on the right side is obtained from the original term $Q_d$ by replacing the factor $G_{\alpha \alpha}$ by the deterministic function $-\frac{1}{1+\wt m}$. We use $Q_d\big( G_{\alpha \alpha} \rightarrow -\frac{1}{1+\wt m}\big)$ to denote such a term, and write (\ref{temp_4}) in short as
\begin{align}\label{case_1a}
\E[Q_d]=&\E \Big[  Q_d\big( G_{\alpha \alpha} \rightarrow -\frac{1}{1+\wt m}\big)\Big]+\sum_{ \substack{Q_{d''} \in \mathcal{Q}_{d''}  \\ d'' \geq d+1 }} \E[Q_{d''}]+\frac{1}{\sqrt{N}} \sum_{\substack{ Q^o_{d'} \in \mathcal{Q}^o_{d'}\\ d' \geq d}} \E [Q^o_{d'}]+O_{\prec}(N^{-1}),
\end{align}
where we have eliminated the diagonal Green function entry $G_{\alpha \alpha}$ for the leading term, and thus the number of appearances of the index $\alpha$ in the product of Green function entries is reduced to $\nn(\alpha)-2$. The second group of terms on the right side of (\ref{case_1a}), collected from the second order terms in the cumulant expansions, are at most $2n$ terms of the form in (\ref{form}) of degrees at least $d+1$, where the number of Green function entries in the product in each term is $n+1$ and the number of the off-diagonal Green function entries in each term is at least $d^o$. 
Moreover, the number of appearances of any original summation indices (including $\alpha$) is the same as that of $Q_d$, and the number of appearances of the fresh indices created in the expansions (\eg the index $k$ in (\ref{temp_1}), $v$ and $\nu$ in (\ref{temp_3})) is exactly two. 
Lastly, the third group of terms are at most $4(n+1)^2$ unmatched terms of the form in (\ref{form}) up to a factor $\frac{1}{\sqrt{N}}$, which are collected from the third order terms in the cumulant expansions.

{\bf Case 2$\alpha$: Eliminating two $\alpha$'s from two off-diagonal Green function entries.}  In this second case, we expand an off-diagonal Green function entry $G_{x_i y_i}~(1\leq i \leq n^o)$ with $\alpha \in \{x_i,y_i\}$ in the product in (\ref{form}). We may assume $G_{x_1y_1}=G_{\alpha y_1}$ with $y_1 \neq \alpha$.  Using the first identity in (\ref{id_3}) on $G_{\alpha y_1}$ and applying the cumulant expansions in Lemma \ref{cumulant}, we have
\begin{align}\label{temp_11}
\E[Q_d(t,z)]=&\frac{1}{N^{\#\mathcal{I}} }\sum_{\mathcal{I}} c_{\mathcal{I}} \E \Big[  G_{\alpha y_1}\Big(\prod^{n^o}_{i=2} G_{x_i y_i}\Big) \Big(\ud{G}-\wt m\Big)^{n^g} \Big( \ud{\G}+\frac{1}{1+\wt m}\Big)^{n^{\mathfrak{g}}}\Big]\nonumber\\
=&\frac{1}{N^{\#\mathcal{I}} }\sum_{\mathcal{I}} c_{\mathcal{I}} \sum_{k=1}^N \E \Big[  H_{\alpha k}G_{k y_1}  \Big(\prod^{n^o}_{i=2} G_{x_i y_i}\Big) \Big(\ud{G}-\wt m\Big)^{n^g} \Big( \ud{\G}+\frac{1}{1+\wt m}\Big)^{n^{\mathfrak{g}}}\Big]\nonumber\\
&+\frac{1}{N^{\#\mathcal{I}} }\sum_{\mathcal{I}} c_{\mathcal{I}} \sum_{k=1}^N \E \Big[  \delta_{\alpha y_1} \Big(\prod^{n^o}_{i=2} G_{x_i y_i}\Big) \Big(\ud{G}-\wt m\Big)^{n^g} \Big( \ud{\G}+\frac{1}{1+\wt m}\Big)^{n^{\mathfrak{g}}}\Big]\nonumber\\
=&\frac{1}{N^{1+\#\mathcal{I}} }\sum_{\mathcal{I},k} c_{\mathcal{I}} \E \bigg[  \frac{ \partial G_{k y_1}  \Big(\prod^{n^o}_{i=2} G_{x_i y_i}\Big) \Big(\ud{G}-\wt m\Big)^{n^g} \Big( \ud{\G}+\frac{1}{1+\wt m}\Big)^{n^{\mathfrak{g}}}}{\partial h_{\alpha k}}\bigg]\nonumber\\
&+\frac{1}{\sqrt{N}} \frac{1}{2 N^{1+\#\mathcal{I}} }\sum_{\mathcal{I},k} c_{\mathcal{I}} s^{(3)}_{\alpha k}(t) \E \bigg[ \frac{ \partial^2 G_{k y_1}  \Big(\prod^{n^o}_{i=2} G_{x_i y_i}\Big) \Big(\ud{G}-\wt m\Big)^{n^g} \Big( \ud{\G}+\frac{1}{1+\wt m}\Big)^{n^{\mathfrak{g}}}}{\partial h^2_{\alpha k}}\bigg]+O_{\prec}(N^{-1}),
\end{align}
uniformly for any $t \in \R^+$ and $z \in S$, where the error $O_{\prec}(N^{-1})$ comes from the truncation of the cumulant expansions at the third order and the case of index coincidence when $y_1 \equiv \alpha$. The third order terms with $\{s^{(3)}_{\alpha k}(t)\}$, are at most $4n(n+1)$ unmatched terms of the form in (\ref{form}) with an additional factor $\frac{1}{\sqrt{N}}$ in front, similarly as estimated in (\ref{higher}).

Using (\ref{dH}), the resulting $2n$ second order terms, \ie from the first group of terms on the right side of~(\ref{temp_11}) are also of the form in (\ref{form}), where $\mathcal{I}'=\{\mathcal{I},k\}$, $n'=n+1$, and $c_{\mathcal{I}'} \equiv -c_{\mathcal{I}}$. The fresh index~$k$ has the number of appearances $\nn(k)=2$, and the number of appearances of any original summation index in $\mathcal{I}$ (including the index $\alpha$) remains the same as that of $Q_d$. One of the leading terms of degree $d$ from $\frac{\partial}{\partial h_{\alpha k}}$ acting on $G_{k y_1}$ is estimated as in (\ref{temp_2}), where the first term can be absorbed into the left side of~(\ref{temp_11}) by considering $(1+\wt m) \E[Q_d]$.

Similarly, if $\frac{\partial}{\partial h_{\alpha k}}$ acts on the centered factors $(G-\wt m)^{n^g}$ or $\Big( \ud{\G}+\frac{1}{1+\wt m}\Big)^{n^{\mathfrak{g}}}$, as discussed in (\ref{temp_3}), each resulting term has degree $d' =d+1$ and the number of off-diagonal Green function entries in the product is given by $(d^o)'=d^o+2$. In addition, the other terms from taking $\frac{\partial}{\partial h_{\alpha k}}$ on $G_{\alpha y_1}\prod_{i=2}^{n^o}G_{x_i y_i}$ have degrees $d' \geq d+1$ and $(d^o)' \geq d^o+1$,  except the leading ones from taking $\frac{\partial}{\partial h_{\alpha k}}$ on an off-diagonal Green function entry $G_{x_i y_i}~(2\leq i\leq n^o)$ with $\alpha \in \{x_i,y_i\}$. It is not hard to check from (\ref{nu_number_0}) that the number of these leading terms of degree $d$ is given by $\nn^o(\alpha)-1$. Using (\ref{dH}), they are obtained from the original $Q_d$ by replacing one index $\alpha$ from the expanded entry $G_{\alpha y_1}$ and a second index $\alpha$ from an off-diagonal factor $G_{x_i y_i}~(2\leq i\leq n^o)$ with a fresh index $k$, and adding a factor $G_{\alpha \alpha}$ for the replaced pair of the index $\alpha$. Hence each of these leading terms still contains $d^o$ off-diagonal Green function entries. 

For example, we may assume that $G_{x_2 y_2}=G_{\alpha y_2}$ with $y_2 \neq \alpha$. Then the resulting leading term of degree $d$ is given by
\begin{align}\label{example_1}
(*):=-\frac{1}{N^{1+\#\mathcal{I}} }\sum_{\mathcal{I},k} c_{\mathcal{I}}  \E \Big[  G_{k y_1} G_{\alpha \alpha} G_{ ky_2}\Big(\prod^{n^o}_{i=3} G_{x_i y_i}\Big) \Big(\ud{G}-\wt m\Big)^{n^g} \Big( \ud{\G}+\frac{1}{1+\wt m}\Big)^{n^{\mathfrak{g}}}\Big].
\end{align}
Note that this leading term has a diagonal factor $G_{\alpha \alpha}$, which has been discussed in Case 1$\alpha$ above. Using the expansion in (\ref{case_1a}), one can replace $G_{\alpha \alpha}$ by the deterministic function $-\frac{1}{1+\wt m}$ and expand $(*)$ as
\begin{align}\label{example_2} 
(*)=&\frac{1}{1+\wt m}\frac{1}{N^{1+\#\mathcal{I}} }\sum_{\mathcal{I},k} c_{\mathcal{I}}  \E \Big[ G_{k y_1}G_{ ky_2}\Big(\prod^{n^o}_{i=3} G_{x_i y_i}\Big) \Big(\ud{G}-\wt m\Big)^{n^g} \Big( \ud{\G}+\frac{1}{1+\wt m}\Big)^{n^{\mathfrak{g}}}\Big]\nonumber\\
&+\sum_{ \substack{Q_{d'} \in \mathcal{Q}_{d'}  \\ d' \geq d+1} } \E[Q_{d'}]+\frac{1}{\sqrt{N}} \sum_{\substack{Q^o_{d'} \in \mathcal{Q}^o_{d'}\\ d' \geq d}} \E [Q^o_{d'}]+O_{\prec}(N^{-1}).
\end{align}
We hence have replaced one pair of the index $\alpha$ from two off-diagonal Green function entries $G_{\alpha y_1}$ and $G_{\alpha y_2}$ with the fresh index $k$ for this leading term. In general, compared with the original term $Q_d$, the leading terms are obtained by replacing one index~$\alpha$ of the expanded entry $G_{\alpha y_1}$ and a second index~$\alpha$ from another off-diagonal Green function entry $G_{x_i y_i}~(2\leq i\leq n^o)$ with the fresh index $k$, up to a deterministic factor $\frac{1}{1+\wt m}$. We denote these leading terms by 
\begin{align}\label{short_temp}
\frac{1}{1+\wt m}\sum_{\substack{2\leq i \leq n^o\\ x_i=\alpha,y_i \neq \alpha}} \E\Big[Q_{d} \big( x_1,x_i=\alpha \rightarrow k \big)\Big]+\frac{1}{1+\wt m}\sum_{\substack{2\leq i \leq n^o\\x_i \neq \alpha,y_i = \alpha}} \E\Big[Q_{d} \big( x_1,y_i =\alpha\rightarrow k \big)\Big].
\end{align}

Therefore, after moving one leading term as estimated in (\ref{temp_2}) to the left side of (\ref{temp_11}) and dividing both sides of (\ref{temp_11}) by $1+\wt m \sim 1$, together with the expansions such as in (\ref{example_2}) and the third order terms denoted as in (\ref{higher}), we obtain the analogue of (\ref{case_1a}), 
\begin{align}\label{case_2a}
\E[Q_d]=&\frac{1}{(1+\wt m)^2}\sum_{\substack{2\leq i \leq n^o\\x_i=\alpha,y_i \neq \alpha}} \E \Big[ Q_{d} \big( x_1 ,x_i=\alpha \rightarrow k \big)\Big]+\frac{1}{(1+\wt m)^2}\sum_{\substack{2\leq i \leq n^o\\ x_i\neq \alpha,y_i = \alpha}} \E \Big[ Q_{d} \big( x_1, y_i =\alpha\rightarrow k \big)\Big]\nonumber\\
&+\sum_{ \substack{Q_{d''} \in \mathcal{Q}_{d''}  \\ d'' \geq d+1} } \E[Q_{d''}]+\frac{1}{\sqrt{N}} \sum_{\substack{Q^o_{d'} \in \mathcal{Q}^o_{d'}\\ d' \geq d}} \E [Q^o_{d'}]+O_{\prec}(N^{-1}),
\end{align}
uniformly in $t \in \R^+$ and $z \in S$, where the first line are $\nn^o(\alpha)-1$ leading terms of the form in (\ref{form}) of degree $d$, and the number of appearances of the index $\alpha$ in the off-diagonal Green function entries is reduced to $\nn^o(\alpha)-2$. The first group of terms on the last line of (\ref{case_2a}), collected from the second order terms in the cumulant expansions, are at most $2(n+2)^2$ terms of the form in (\ref{form}) of degrees at least~$d+1$, where the number of Green function entries in the product in each term is at most $n+2$ and the number of off-diagonal Green function entries in each term is at least $d^o$. 
Moreover, the number of appearances of any original summation indices is the same as that of $Q_d$, and the number of appearances of the fresh indices created in the expansions is exactly two.
Lastly, the second group of terms on the last line of (\ref{case_2a}) contains at most $4(n+2)^3$ unmatched terms of the form in (\ref{form}) up to a factor $\frac{1}{\sqrt{N}}$, which are collected from the third order terms in the cumulant expansions.

Now we have eliminated two $\alpha$'s from either one diagonal Green function entry $G_{\alpha \alpha}$ or two off-diagonal Green function entries in a product in (\ref{form}) for the leading terms in the expansions, and thus the number of appearances of the index $\alpha$ in the product of Green function entries is reduced to $\nn(\alpha)-2$.

\subsection{Expansion mechanism II}\label{sec:expand_2}

In this subsection, we discuss the similar mechanism to expand a term in (\ref{form}) using indices taking values in $\llbracket 1, N \rrbracket$. Consider an index $b \in \mathcal{I}_N$ and recall that $\nn(b)$ and $\nn^o(b)$ are defined in (\ref{nu_number}) and (\ref{nu_number_0}) for the index $b$. Similarly as in Subsection \ref{sec:expand_1}, we divide the discussion in two cases.

{\bf Case 1b: Eliminating two $b$'s from $G_{bb}$.}  In the first case, we expand a diagonal Green function entry~$G_{bb}$ in the product in (\ref{form}). We may assume $G_{x_1y_1}=G_{bb}$. Using the second identity in (\ref{id_3}) on~$G_{bb}$ and applying the cumulant expansions, we obtain the analogue of (\ref{temp_1}),
\begin{align}\label{temp_13}
z\E[Q_d]=&\frac{1}{N^{\#\mathcal{I}} }\sum_{\mathcal{I}} c_{\mathcal{I}} \E \Big[ z G_{bb}\Big(\prod^{n^o}_{i=2} G_{x_i y_i}\Big) \Big(\ud{G}-\wt m\Big)^{n^g} \Big( \ud{\G}+\frac{1}{1+\wt m}\Big)^{n^{\mathfrak{g}}}\Big]\nonumber\\
=&\frac{1}{N^{1+\#\mathcal{I}} }\sum_{\mathcal{I},\gamma} c_{\mathcal{I}}  \E \bigg[ \frac{ \partial G_{\gamma b}  \Big(\prod^{n^o}_{i=2} G_{x_i y_i}\Big) \Big(\ud{G}-\wt m\Big)^{n^g} \Big( \ud{\G}+\frac{1}{1+\wt m}\Big)^{n^{\mathfrak{g}}}}{\partial h_{b \gamma}}\bigg]\nonumber\\
&-\frac{1}{N^{\#\mathcal{I}} }\sum_{\mathcal{I}} c_{\mathcal{I}} \E \Big[ \Big(\prod^{n^o}_{i=2} G_{x_i y_i}\Big) \Big(\ud{G}-\wt m\Big)^{n^g} \Big( \ud{\G}+\frac{1}{1+\wt m}\Big)^{n^{\mathfrak{g}}}\Big]\nonumber\\
&+\mbox{third order terms}+O_{\prec}(N^{-1}),
\end{align}
uniformly in $t \in \R^+$ and $z \in S$, where the third order terms are given by at most $4n(n+1)$ unmatched terms of the form in (\ref{form}) up to a factor $\frac{1}{\sqrt{N}}$, with $\mathcal{I}'=\{\mathcal{I}, \gamma\}$, $n'=n+2$ and $\nn(\gamma)=3$, similarly as denoted in~(\ref{higher}). 

Using (\ref{dH}), the resulting terms from the second order terms, \ie the first group of terms on the right side of (\ref{temp_13}), are also of the form in (\ref{form}), with $\mathcal{I}'=\{\mathcal{I},\gamma\}$, $n'=n+1$ and $\nn(\gamma)=2$. The number of appearances of any original summation index in $\mathcal{I}$ (including the index $b$) remains the same as that of~$Q_d$. The leading term of degree $d$ stems from $\frac{\partial}{\partial h_{b \gamma}}$ acting on $G_{b \gamma}$, \ie
\begin{align}\label{temp_23}
-\frac{1}{N^{1+\#\mathcal{I}} }\sum_{\mathcal{I},\gamma} c_{\mathcal{I}} \E \Big[ G_{\gamma \gamma} G_{bb}  \Big(\prod^{n^o}_{i=2} G_{x_i y_i}\Big) \Big(\ud{G}-\wt m\Big)^{n^g} \Big( \ud{\G}+\frac{1}{1+\wt m}\Big)^{n^{\mathfrak{g}}}\Big]\nonumber\\
=-\varrho \E[\ud{\G} Q_d]= \frac{\varrho}{1+\wt m} \E[Q_d] -\varrho  \E \Big[ \Big( \ud{\G} +\frac{1}{1+\wt m}\Big) Q_d\Big],
\end{align}
with $\varrho$ given in (\ref{ratio}), which comes from the definition of $\G$ in (\ref{ggu}). The first term on the right side of~(\ref{temp_23}) can be moved to the left side of (\ref{temp_13}) and we hence obtain 
$$\Big(z-\frac{\varrho}{1+\wt m} \Big) \E[Q_d]=-\frac{1}{ \wt m} \E[Q_d],$$
using the relation in (\ref{mplawgamma-1}). The second term on the right side of (\ref{temp_23}) gains an additional centered factor $\ud{\G} +\frac{1}{1+\wt m}$. Thus its degree is increased to $d+1$, and the number of off-diagonal Green function entries in the product remains the same as $d^o$. 

Since $\gamma$ in (\ref{temp_13}) is a fresh index, the remaining terms from the second order expansions have degrees $d' \geq d+1$, and the number of off-diagonal Green function entries in each term, $(d^o)'$, satisfies $(d^o)' \geq d^o+1$. Therefore, after absorbing the leading term in (\ref{temp_23}) into the left side of (\ref{temp_13}) by considering $-\frac{1}{ \wt m} \E[Q_d]$ and multiplying both sides by $-\wt m$, we obtain the analogue of (\ref{case_1a}),
\begin{align}\label{case_1b}
\E [Q_d]=&\E \Big[  Q_d\big( G_{bb} \rightarrow \wt m \big)\Big]+\sum_{ \substack{Q_{d''} \in \mathcal{Q}_{d''} \\ d'' \geq d+1 }} \E[Q_{d''}]+\frac{1}{\sqrt{N}} \sum_{\substack{Q^o_{d'} \in \mathcal{Q}^o_{d'}\\ d' \geq d}} \E [Q^o_{d'}]+O_{\prec}(N^{-1}),
\end{align}
where the leading term of degree $d$ is obtained from $Q_d$ by replacing the diagonal factor~$G_{bb}$ with the deterministic function $\wt m$, and the remaining terms are described similarly as below (\ref{case_1a}).

{\bf Case 2b: Eliminating two $b$'s from two off-diagonal Green function entries.}  In the second case, we expand an off-diagonal Green function entry $G_{x_i y_i}~(1\leq i \leq n^o)$ with $b \in \{x_i,y_i\}$ in the product in (\ref{form}). We may assume $G_{x_1y_1}=G_{b y_1}$ with $y_1 \neq b$.  Using the second identity in (\ref{id_3}) on $G_{b y_1}$ and applying the cumulant expansions, we have the analogue of (\ref{temp_11})
\begin{align}\label{temp_113}
z\E[Q_d]=&\frac{1}{N^{\#\mathcal{I}} }\sum_{\mathcal{I}} c_{\mathcal{I}} \E \Big[zG_{b y_1}\Big(\prod^{n^o}_{i=2} G_{x_i y_i}\Big) \Big(\ud{G}-\wt m\Big)^{n^g} \Big( \ud{\G}+\frac{1}{1+\wt m}\Big)^{n^{\mathfrak{g}}}\Big]\nonumber\\
=&\frac{1}{N^{1+\#\mathcal{I}} }\sum_{\mathcal{I},\gamma} c_{\mathcal{I}}  \E \bigg[ \frac{ \partial G_{\gamma y_1}  \Big(\prod^{n^o}_{i=2} G_{x_i y_i}\Big) \Big(\ud{G}-\wt m\Big)^{n^g} \Big( \ud{\G}+\frac{1}{1+\wt m}\Big)^{n^{\mathfrak{g}}}}{\partial h_{b \gamma}}\bigg]\nonumber\\
&+\mbox{third order terms}+O_{\prec}(N^{-1}),
\end{align}
uniformly in $t \in \R^+$ and $z \in S$, where the third order terms can be estimated similarly as in (\ref{higher}) and the error comes from the truncation of the cumulant expansions and the case when $b \equiv y_1$. 

Using (\ref{dH}), the resulting terms from the second order terms are also of the form in (\ref{form}), with $\mathcal{I}'=\{\mathcal{I},\gamma\}$, $n'=n+1$ and $\nn(\gamma)=2$. The number of appearances of any original summation index in $\mathcal{I}$ (including the index $b$) remains the same as that of $Q_d$. One of the leading terms of degree $d$ from $\frac{\partial}{\partial h_{b \gamma}}$ acting on $G_{\gamma y_1}$ is estimated as in (\ref{temp_23}), where the first term can be absorbed into the left side of (\ref{temp_113}) by considering $-\frac{1}{ \wt m} \E[Q_d]$. Similarly as discussed in Case $2\alpha$, since $\gamma$ is a fresh index, the other terms have degrees $d' \geq d+1$ and $(d^o)' \geq d^o+1$, except the ones from taking $\frac{\partial}{\partial h_{b \gamma}}$ on an off-diagonal Green function entry $G_{x_i y_i}~(2\leq i\leq n^o)$ with $b \in \{x_i,y_i\}$. The number of these leading terms of degree $d$ is then given by $\nn^o(b)-1$. For example, we may assume that $G_{x_2 y_2}=G_{b y_2}$ with $y_2 \neq b$. Then the resulting leading term of degree $d$ is given by
\begin{align}\label{example_14}
(**):=-\frac{1}{N^{1+\#\mathcal{I}} }\sum_{\mathcal{I},\gamma} c_{\mathcal{I}}  \E \Big[  G_{\gamma y_1} G_{b b} G_{ \gamma y_2}\Big(\prod^{n^o}_{i=3} G_{x_i y_i}\Big) \Big(\ud{G}-\wt m\Big)^{n^g} \Big( \ud{\G}+\frac{1}{1+\wt m}\Big)^{n^{\mathfrak{g}}}\Big].
\end{align}
Note that $(**)$ contains a factor $G_{bb}$ which has been previously discussed in Case $1b$.  One can further replace $G_{bb}$ by the deterministic function $\wt m$ and expand $(**)$  using (\ref{case_1b}). We hence have eliminated one pair of the index $b$ from two off-diagonal Green function entries for this leading term. In general, compared with the original term $Q_d$, all the leading terms are obtained by replacing one index $b$ from the expanded entry $G_{b y_1}$ and a second index $b$ from another off-diagonal factor $G_{x_i y_i}~(2\leq i\leq n^o)$ with a fresh index $\gamma$, up to a deterministic factor $-\wt m $. We denote these leading terms by 
\begin{align}\label{example_25}
-\wt m \sum_{\substack{2\leq i \leq n^o\\x_i=b,y_i \neq b}} \E \Big[ Q_{d} \big(  x_1,x_i=b \rightarrow \gamma \big)\Big]- \wt m\sum_{\substack{2\leq i \leq n^o\\x_i \neq b,y_i = b}} \E \Big[ Q_{d} \big( x_1,y_i =b\rightarrow \gamma \big)\Big].
\end{align}

Therefore, after moving one leading term in (\ref{temp_23}) to the left side of (\ref{temp_113}) and multiplying $-\wt m$ on both sides, we obtain the analogue of (\ref{case_2a}), 
\begin{align}\label{case_2b}
\E[Q_d]=&\wt m ^2\sum_{\substack{2\leq i \leq n^o\\x_i=b,y_i \neq b}} \E \Big[ Q_{d} \big( x_1,x_i=b \rightarrow \gamma \big)\Big]+\wt m^2\sum_{\substack{2\leq i \leq n^o\\x_i \neq b,y_i = b}}  \E \Big[ Q_{d} \big( x_1,y_i =b\rightarrow \gamma \big)\Big]\nonumber\\
&+\sum_{ \substack{Q_{d''} \in \mathcal{Q}_{d''}  \\d'' \geq d+1} } \E[Q_{d''}]+\frac{1}{\sqrt{N}} \sum_{\substack{Q^o_{d'} \in \mathcal{Q}^o_{d'}\\ d' \geq d}} \E [Q^o_{d'}]+O_{\prec}(N^{-1}),
\end{align}
where the first line contains $\nn^o(b)-1$ leading terms of the form in (\ref{form}) of degree $d$ and the number of appearances of the index $b$ in the off-diagonal Green function entries in each term is reduced to $\nn^o(b)-2$. Terms on the second line are described similarly as below (\ref{case_2a}).

To sum up, we have eliminated one pair of the index $b$ from either one diagonal Green function entry~$G_{bb}$ or two off-diagonal Green function entries in a product of the form (\ref{form}) for the leading terms in the expansions, and thus the number of appearances of the index $b$ in the product of Green function entries is reduced to $\nn(b)-2$.

\subsection{Proof of Proposition~\ref{unmatch_lemma}}
We are now prepared to prove Proposition~\ref{unmatch_lemma} using the above expansion mechanism iteratively.

\begin{proof}[Proof of Proposition \ref{unmatch_lemma}]
 Consider an arbitrary unmatched term $Q_d^{o} \in \mathcal{Q}_d^o$ of the form in (\ref{form}). We may first assume that an index $\alpha \in \mathcal{I}_M$ belongs to the unmatched index set $\mathcal{I}^o$ defined in (\ref{unmatch_set}). We will prove the proposition by iteratively using the expansion in (\ref{case_2a}) via the unmatched index~$\alpha$. Similar arguments also apply if there exists an unmatched index in $\mathcal{I}_N$, by using the expansion in (\ref{case_2b}) iteratively. 
 
Recall that the number of Green function entries in $Q^o_d$, $n$, is defined in (\ref{n_number}). We also recall $\nn(\alpha)$ and $\nn^o(\alpha)$, for the unmatched index $\alpha$, are defined in (\ref{nu_number}) and (\ref{nu_number_0}), with both $\nn(\alpha)$ and $\nn^o(\alpha)$ odd. The key observation is that in one expansion step in (\ref{case_2a}), the number of appearances of the index $\alpha$ in the off-diagonal Green function entries in each leading term of degree $d$ is reduced to $\nn^o(\alpha)-2$. Once $\nn^o(\alpha)$ is reduced to one, there will be no more terms of degree $d$ on the right side of (\ref{case_2a}). We hence improve the estimate of the size of $\E[Q_d^o]$ by one order in the power counting, compared to the initial estimate in (\ref{localaw_0}). We then use $\nn^o(\alpha)$ to record the iteration step and use $n$ to control the number of expansion terms. We begin the iteration for a given unmatched term $Q_d^o$ by setting the initial numbers to be
\begin{align}\label{s_00}
\nn^o(\alpha)=s_0, \qquad n=n_0,
\end{align}
where $s_0 \geq 1$ is an odd integer and $n_0 \geq 1$.
 
Since $\alpha$ is an unmatched index in $\mathcal{I}^o$, there exists at least one off-diagonal Green function entry $G_{x_i,y_i}~(1\leq i \leq n^o)$ with $\alpha \in \{x_i,y_i\}$ in the expression of $Q_d^o$. We may set $G_{x_1y_1}=G_{\alpha y_1}$ with $y_1 \neq \alpha$. Expanding this off-diagonal Green function factor $G_{\alpha y_1}$ by using (\ref{case_2a}), we obtain
\begin{align}\label{unmatch_temp_1}
\E[Q^o_d(t,z)]=&\frac{1}{N^{\#\mathcal{I}} }\sum_{\mathcal{I}} c_{\mathcal{I}} \E \Big[  G_{\alpha y_1}\Big(\prod^{n^o}_{i=2} G_{x_i y_i}\Big) \Big(\ud{G}-\wt m\Big)^{n^g} \Big( \ud{\G}+\frac{1}{1+\wt m}\Big)^{n^{\mathfrak{g}}}\Big]\nonumber\\
=&\frac{1}{(1+\wt m)^2}\sum_{\substack{2\leq i \leq n^o\\x_i=\alpha,y_i \neq \alpha}} \E\Big[ Q_{d} \big( x_1 ,x_i=\alpha \rightarrow k \big)\Big]+\frac{1}{(1+\wt m)^2}\sum_{\substack{2\leq i \leq n^o\\ x_i\neq \alpha,y_i = \alpha}} \E \Big[ Q_{d} \big( x_1, y_i =\alpha\rightarrow k \big)\Big]\nonumber\\
&+\sum_{ \substack{Q_{d''} \in \mathcal{Q}_{d''}  \\ d'' \geq d+1} } \E[Q_{d''}]+\frac{1}{\sqrt{N}} \sum_{\substack{Q^o_{d'} \in \mathcal{Q}^o_{d'}\\ d' \geq d}} \E [Q^o_{d'}]+O_{\prec}(N^{-1}),
\end{align}
uniformly in $t\in \R^+$ and $z \in S$ given in (\ref{ddd}), where the second line above are $s_0-1$ unmatched terms of the form in (\ref{form}) of degree $d$, with $\alpha \in \mathcal{I}^o$, $\nn^o(\alpha)=s_0-2$ and $n=n_0$.  The first group of terms on the last line are at most $2(n_0+2)^2$ unmatched terms of the form in (\ref{form}) of degrees at least $d+1$, with $\alpha \in \mathcal{I}^o$, $\nn^o(\alpha)=s_0$ and $n \leq n_0+2$. The second group of terms on the last line are at most $4(n_0+2)^3$ unmatched terms of the form in (\ref{form}) up to a factor $\frac{1}{\sqrt{N}}$. Therefore, we write (\ref{unmatch_temp_1}) in short as
\begin{align}\label{case_12}
\E[Q_d^o]=\sum_{\substack{Q^o_{d_1} \in \mathcal{Q}^o_{d_1} \\ d_1=d}} \E [Q^o_{d_1}]+ \sum_{\substack{Q^o_{d'_1} \in \mathcal{Q}^o_{d'_1}\\ d'_1 \geq d+1}} \E [Q^o_{d'_1}]+\frac{1}{\sqrt{N}} \sum_{\substack{Q^o_{d_1''} \in \mathcal{Q}^o_{d_1''}\\ d_1'' \geq d}} \E [Q^o_{d_1''}]+O_{\prec}\big(N^{-1}\big),
\end{align}
where we use the subscript $1$ in the degrees $d_1,d_1', d_1''$ to indicate the first iteration step. 

The good news is that the leading terms of degree $d$ still have the unmatched summation index $\alpha$, and thus these leading terms can be further expanded using (\ref{case_2a}). Then in the second step, we apply again~(\ref{case_2a}) on each resulting term $Q^o_{d_1}$ on the right side of (\ref{case_12}) with $\nn^o(\alpha)=s_0-2$ and $n = n_0$, to obtain
\begin{align}\label{case_13}
\E[Q_{d_1}^o]=\sum_{\substack{Q^o_{d_2} \in \mathcal{Q}^o_{d_2}\\ d_2=d_1}} \E [Q^o_{d_2}]+ \sum_{\substack{ Q^o_{d'_2} \in \mathcal{Q}^o_{d'_2}\\ d'_2 \geq d_1+1}} \E [Q^o_{d'_2}]+\frac{1}{\sqrt{N}} \sum_{\substack{Q^o_{d_2''} \in \mathcal{Q}^o_{d_2''}\\ d_2'' \geq d_1}} \E [Q^o_{d_2''}]+O_{\prec}\big(N^{-1}\big),
\end{align}
uniformly in $t\in \R^+$ and $z \in S$, where the leading terms of degree $d$ are $s_0-3$ unmatched terms with $\alpha \in \mathcal{I}^o$, $\nn^o(\alpha)=s_0-4$ and $n=n_0$, and we use the subscript $2$ to indicate the second step. The second group of terms on the right side of (\ref{case_13} )are at most $2(n_0+2)^2$ unmatched terms of degrees at least~$d+1$, with $\alpha \in \mathcal{I}^o$, $\nn^o(\alpha)=s_0-2$ and $n \leq n_0+2$. The third group of terms are at most $4(n_0+2)^3$ unmatched terms up to a factor $\frac{1}{\sqrt{N}}$.

We continue to expand each of the resulting terms of degree $d$ on the right side of (\ref{case_13}) using (\ref{case_2a}). In general, in the $s$-th step of the iteration, we have
\begin{align}\label{step_s}
\E[Q_{d_{s-1}}^o]=&\sum_{\substack{Q^o_{d_{s}} \in \mathcal{Q}^o_{d_{s}}\\ d_{s}=d_{s-1}}} \E [Q^o_{d_{s}}]+ \sum_{\substack{Q^o_{d'_{s}} \in \mathcal{Q}^o_{d'_{s}}\\ d'_{s} \geq d_{s-1}+1}} \E [Q^o_{d'_{s}}]+\frac{1}{\sqrt{N}} \sum_{\substack{ Q^o_{d_{s}''} \in \mathcal{Q}^o_{d_{s}''}\\ d_{s}'' \geq d_{s-1}}} \E [Q^o_{d_{s}''}]+O_{\prec}\big(N^{-1}\big),
\end{align}
uniformly in $t\in \R^+$ and $z \in S$, where the first group of terms are $s_0-2s+1$ unmatched terms of degree~$d$, with $\alpha \in \mathcal{I}^o$, $\nn^o(\alpha)=s_0-2s$ and $n=n_0$. The second group of terms are at most $2(n_0+2)^2$ unmatched terms of the form in (\ref{form}) of higher degrees, with $\alpha \in \mathcal{I}^o$, $\nn^o(\alpha)=s_0-2s+2$ and $n \leq n_0+2$. The third group of terms are at most $4(n_0+2)^3$ unmatched terms of the form in (\ref{form}) up to a factor $\frac{1}{\sqrt{N}}$.

We stop the iterations at the step $s=\frac{s_0+1}{2}$. Then the term $Q_{d_{s-1}}^o$ on the left side of (\ref{step_s}) has $\nn^o(\alpha)$ being reduced to one and there will be no more terms of degree $d$ on the right side of (\ref{step_s}). Then we end up with finitely many unmatched terms of degrees at least $d+1$ generated in the iterations, plus all the unmatched terms with a factor $\frac{1}{\sqrt{N}}$, collected from the third order terms in the cumulant expansions. To sum up, for any unmatched $Q_d^o \in \mathcal{Q}_d^o$, we have obtained the following expansion,
\begin{align}\label{third_unmatch2}
\E[Q_d^o(t,z)]=\sum_{\substack{ Q^o_{d'} \in \mathcal{Q}^o_{d'} \\ d' \geq d+1}}  \E[Q^o_{d'}(t,z)]+\frac{1}{\sqrt{N}} \sum_{\substack{Q^o_{d''} \in \mathcal{Q}^o_{d''}\\ d'' \geq d}} \E [Q^o_{d''}(t,z)]+O_{\prec}\big(N^{-1}\big)\,,
\end{align}
uniformly in $t\in \R^+$ and $z \in S$, where the number of unmatched terms on the right side above is bounded by $(Cn_0)^{cn_0}$, and the number of the Green function entries in  each term is bounded by $Cn_0$ for some numerical constants~$C,c>0$. The error term $O_{\prec}(N^{-1})$ stems from truncating the cumulant expansions and from the diagonal cases, \ie when two distinct summation indices coincide in the sums over $\mathcal{I}$.

We finally iterate the expansions in (\ref{third_unmatch2}) for $D-d$ times. Then the unmatched terms from the first group on the right side of (\ref{third_unmatch2}) have degrees increased to at least $D$, and the unmatched terms with a factor $\frac{1}{\sqrt{N}}$ from the second group have degrees increased to at least $D-1$. In addition, the number of these terms generated in the iterations is bounded by $\big((C^Dn_0)^{c^D n_0} \big)^D$, and the number of the Green function entries in each term is bounded by $C^{D}n_0$. We hence obtain from the local law in~(\ref{G}) by power counting that
\begin{align}\label{third_unmatch3}
|\E[Q_d^o(t,z)]|=O_{\prec}\Big(\Psi^D+\frac{\Psi^{D-1}}{\sqrt{N}}+\frac{1}{N}\Big)=O_{\prec}(\Psi^D+N^{-1})\,,
\end{align}
uniformly in $t\in \R^+$ and $z \in S$. We have finished the proof of Proposition \ref{unmatch_lemma}.

.
\end{proof}

\subsection{Proof of Proposition~\ref{lemma_expand_type}}
Next, we give the proof of Proposition \ref{lemma_expand_type} using the expansion mechanism in Subsections \ref{sec:expand_1}-\ref{sec:expand_2} and Proposition \ref{unmatch_lemma}.

\begin{proof}[Proof of Proposition \ref{lemma_expand_type}]
We consider a type-$\alpha b$ term of degree $d$, denoted by $P_d^{\alpha b} \in \mathcal{P}_d^{\alpha b}$, from Definition~\ref{def_type_AB}, with fixed initial integers $n_0$ defined in (\ref{n_number}) and $d^o_0$ given in (\ref{d_o}), \ie
\begin{align}
\frac{1}{N^{2+\#\mathcal{I}} }  \sum_{\alpha=N+1}^{N+M} \sum_{b=1}^N \sum_{\mathcal{I}} c_{\mathcal{I},\alpha,b}\Big(\prod^{n^o}_{i=1} G_{x_i y_i}\Big) \Big(\ud{G}-\wt m\Big)^{n^{g}} \Big( \ud{\G}+\frac{1}{1+\wt m}\Big)^{n^{\mathfrak{g}}}, 
\end{align}
with $\nn(\alpha)=\nn(b)=4$ given in (\ref{nu_number}), $\nn(\mathfrak{v}_j)=2$ for any $\mathfrak{v}_j \in \mathcal{I}$, and $x_i \neq y_i~(1\leq i\leq n^{o})$ unless $x_i=y_i=\alpha$ or $x_i=y_i=b$. 

We first expand the type-$\alpha b$ term $P_d^{\alpha b}$ with $\nn(\alpha)=4$ by the index $\alpha$ into finitely many type-$b$ terms with $\nn(\alpha)=0$, using the expansions in (\ref{case_1a}) and (\ref{case_2a}) twice. We split the discussion in two cases.

\textbf{Case 1:} If there is a diagonal factor $G_{\alpha \alpha}$ in the product of Green function entries, we apply expansions on $G_{\alpha \alpha}$ as discussed in Case $1\alpha$ in Subsection \ref{sec:expand_1} and obtain from (\ref{case_1a}) that
\begin{align}\label{remove_a}
\E[P_d^{\alpha b}(t,z)]=&\frac{1}{N^{2+\#\mathcal{I}} }\sum_{\mathcal{I},\alpha,b}  c_{\mathcal{I},\alpha,b} \E\Big[G_{\alpha \alpha}\Big(\prod^{n^o}_{i=2} G_{x_i y_i}\Big) \Big(\ud{G}-\wt m\Big)^{n^g} \Big( \ud{\G}+\frac{1}{1+\wt m}\Big)^{n^{\mathfrak{g}}}\Big] \nonumber\\
=&\E \Big[  P_d^{\alpha b} \big( G_{\alpha \alpha} \rightarrow -\frac{1}{1+\wt m}\big)\Big]+\sum_{ \substack{Q_{d'} \in \mathcal{Q}_{d'}  \\ d' \geq d+1 }} \E[Q_{d'}]+\frac{1}{\sqrt{N}} \sum_{\substack{ Q^o_{d'} \in \mathcal{Q}^o_{d'}\\ d' \geq d}} \E [Q^o_{d'}]+O_{\prec}(N^{-1}),
\end{align}
uniformly in $t\in \R^+$ and $z \in S$ given in (\ref{ddd}), where the leading term of degree $d$ with $\nn(\alpha)=2$ is obtained from the original type-$\alpha b$ term $P_d^{\alpha b}$ by replacing the factor $G_{\alpha \alpha}$ with the deterministic function $-\frac{1}{1+\wt m}$. In addition, the third group of terms on the right side of (\ref{remove_a}) contains at most $4(n_0+1)^2$ unmatched terms up to a factor $\frac{1}{\sqrt{N}}$, and thus can be bounded by $O_{\prec}(N^{-1/2}\Psi^D+N^{-3/2})$ using Proposition \ref{unmatch_lemma}. 

 We next discuss in detail the second group of terms on the right side of (\ref{remove_a}), which are collected from the second order terms in the cumulant expansions. They are at most $2n_0$ terms of the form in~(\ref{form_ab}) of degrees at least $d+1$, with $n=n_0+1$ and $d^o \geq d_0^o$. Moreover, we have $\nn(\alpha)=4$, $\nn(b)=4$ and $\nn(\mathfrak{v}_j)=2$ for any original indices $\mathfrak{v}_j \in \mathcal{I}$. The number of appearances of any fresh index created in the expansions is exactly two. It is not hard to check using the differentiation rule (\ref{dH}) that there are no diagonal Green function entries, except possible factors of $G_{\alpha \alpha}$ and $G_{bb}$, in the product of each resulting term. Thus these terms are also type-$\alpha b$ terms from Definition \ref{def_type_AB}.

\textbf{Case 2:} Else if there is no diagonal factor $G_{\alpha\alpha}$ in the product of Green function entries, then we have $\nn(\alpha)=\nn^o(\alpha)=4$; see (\ref{nu_number_0}). We may assume that $G_{x_1y_1}=G_{\alpha y_1}$ with $y_1 \neq \alpha$. Applying expansions on~$G_{\alpha y_1}$ as outlined in Case $2\alpha$ in Subsection \ref{sec:expand_2}, we obtain from (\ref{case_2a}) that
\begin{align}\label{remove_a_1}
\E[P_d^{\alpha b}]=&\frac{1}{N^{2+\#\mathcal{I}} }\sum_{\mathcal{I},\alpha,b}  c_{\mathcal{I},\alpha,b} \E\Big[G_{\alpha y_1} \Big(\prod^{n^o}_{i=2} G_{x_i y_i}\Big) \Big(\ud{G}-\wt m\Big)^{n^g} \Big( \ud{\G}+\frac{1}{1+\wt m}\Big)^{n^{\mathfrak{g}}}\Big] \nonumber\\
=&\frac{1}{(1+\wt m)^2}\sum_{\substack{2\leq i \leq n^o\\x_i=\alpha,y_i \neq \alpha}} \E \Big[ Q_{d} \big( x_1 ,x_i=\alpha \rightarrow k \big)\Big]+\frac{1}{(1+\wt m)^2}\sum_{\substack{2\leq i \leq n^o\\ x_i\neq \alpha,y_i = \alpha}} \E \Big[ Q_{d} \big( x_1, y_i =\alpha\rightarrow k \big)\Big]\nonumber\\
&+\sum_{ \substack{Q_{d''} \in \mathcal{Q}_{d''}  \\ d'' \geq d+1} } \E[Q_{d''}]+\frac{1}{\sqrt{N}} \sum_{\substack{Q^o_{d'} \in \mathcal{Q}^o_{d'}\\ d' \geq d}} \E [Q^o_{d'}]+O_{\prec}(N^{-1}),
\end{align}
uniformly in $t\in \R^+$ and $z \in S$, where the second line contains three leading terms of degree $d$ with $\nn(\alpha)=2$, which are obtained from the original type-$\alpha b$ term $P_d^{\alpha b}$ by replacing one pair of the index $\alpha$ from two off-diagonal Green function entries with a fresh index $k$, up to a factor $\frac{1}{(1+\wt m)^2}$. Similarly as in~(\ref{remove_a}), the first group of terms on the last line above contains at most $8(n_0+2)$ type-$\alpha b$ terms of the form in (\ref{form_ab}) of degrees at least $d+1$, with $n\leq n_0+2$ and $d^o \geq d_0^o$. The second group of unmatched terms with a factor~$\frac{1}{\sqrt{N}}$ on the last line can be bounded by $O_{\prec}(N^{-1/2}\Psi^D+N^{-3/2})$ using Proposition~\ref{unmatch_lemma}.

Combining (\ref{remove_a_1}) with (\ref{remove_a}), we have eliminated one pair of the index $\alpha$ for the leading terms of degree $d$ in one expansion step. We further apply the above arguments  on these leading terms with $\nn(\alpha)=2$ and eliminate another pair of the index $\alpha$ to obtain type-$b$ terms. Therefore, any type-$\alpha b$ term can be expanded using the index $\alpha$ as
 \begin{align}\label{alpha}
\E[P_d^{\alpha b}]=&\sum_{ \substack{P^{b}_{d'} \in \mathcal{P}^{b}_{d'}  \\ d'=d }} \E [  P_{d'}^{b}]+\sum_{ \substack{P^{\alpha b}_{d''} \in \mathcal{P}^{\alpha b}_{d''}  \\ d'' \geq d+1 }} \E[P^{\alpha b}_{d''}]+O_{\prec}(N^{-1}+N^{-1/2}\Psi^{D}),
\end{align}
uniformly in $t\in \R^+$ and $z \in S$, where the first group of terms with degree $d$ are at most three type-$b$ terms of the form in (\ref{form_ab}), with $\nn(\alpha)=0$, $\nn(b)=4$, and the second group of terms are at most $32(n_0+2)$ type-$\alpha b$ terms of the form in (\ref{form_ab}) of degrees at least $d+1$, with $n \leq n_0+2$ and $d^o \geq d^o$. 

Iterating the expansion procedure in (\ref{alpha}) $D-d$ times, the degrees of the resulting type-$\alpha b$ terms on the right side of (\ref{alpha}) are increased to at least $D$. Using the local law in (\ref{G}), we expand an arbitrary type-$\alpha b$ term $P^{\alpha b}_d\in \mathcal{P}^{\alpha b}_d$ as a finite sum of type-0 terms of degrees at least~$d$, up to $O_{\prec}(\Psi^D+N^{-1})$, \ie
\begin{align}\label{alpha_final}
\E [ P_d^{\alpha b}(t,z)]=\sum_{d \leq d' < D} \sum_{P^b_{d'} \in \mathcal{P}^b{d'}} \E [ P^b_{d'}(t,z)]+O_{\prec}\big(N^{-1}+\Psi^D\big)\,,
\end{align}
uniformly in $t\in \R^+$ and $z \in S$, where the sum above contains at most $(32(n_0+2D))^D$ type-$b$ terms of the form in (\ref{form}) of degrees at least $d$, with $n\leq n_0+2D$ and $d^o \geq d_0^o$.

Next in the second step, we further eliminate two pairs of the index $b$ and expand the resulting type-$b$ terms on the right side of (\ref{alpha_final}) into type-0 terms. For any type-$b$ terms of the form in (\ref{form_ab}) with fixed~$n_0$ given in (\ref{n_number}) and $d_0^o$ defined in (\ref{d_o}), using the expansions in (\ref{case_1b}), (\ref{case_2b}) and similar arguments for the index $\alpha$ as above, we obtain the  analogue of (\ref{alpha}), \ie
 \begin{align}\label{b}
\E[P_d^{ b}]=&\sum_{ \substack{P_{d'} \in \mathcal{P}_{d'}  \\ d'=d }} \E [  P_{d'}]+\sum_{ \substack{P^{ b}_{d''} \in \mathcal{P}^{ b}_{d''}  \\ d'' \geq d+1 }} \E[P^{ b}_{d''}]+O_{\prec}(N^{-1}+N^{-1/2}\Psi^{D}),
\end{align}
uniformly in $t\in \R^+$ and $z \in S$, where the first group of terms of degree $d$ are at most three type-$0$ terms of the form in (\ref{form_ab}) with $\nn(\alpha)=\nn(b)=0$, and the second group of terms are at most $32(n_0+2)$ type-$b$ terms of the form in (\ref{form_ab}) of degrees at least $d+1$, with $n \leq n_0+2$ and $d^o \geq d_0^o$. Iterating the expansion procedure in (\ref{b}) $D-d$ times and we obtain the analogue of (\ref{alpha_final}),
\begin{align}\label{b_final}
\E [ P_d^{ b}(t,z)]=\sum_{d \leq d' < D} \sum_{P_{d'} \in \mathcal{P}{d'}} \E [ P_{d'}(t,z)]+O_{\prec}\big(N^{-1}+\Psi^D\big)\,,
\end{align}
uniformly in $t\in \R^+$ and $z \in S$, where the sum above contains at most $(32(n_0+2D))^D$ type-$0$ terms of the form (\ref{form}) of degrees at least $d$, with $n\leq n_0+2D$ and $d^o \geq d_0^o$. 

To sum up, combining (\ref{alpha_final}) with (\ref{b_final}), we conclude the proof of Proposition \ref{lemma_expand_type}.
\end{proof}

\section{Proof of Proposition \ref{lemma_trace_sample}}\label{sec:type0}

We start with the following lemma by considering the white Wishart ensemble $W^*W$, whose proof is postponed to the end of this section. 
\begin{lemma}\label{lemma_trace} Consider the real white Wishart ensemble $W^*W$ with $W$ be a Gaussian matrix satisfying (\ref{sample_condition_1}) and (\ref{condition}). For any fixed small $\epsilon>0$ and fixed $C_1,C_2>0$, recall the domain $S_{\mathrm{edge}}\equiv S_{\mathrm{edge}}(\epsilon,C_1,C_2)$ defined in~\eqref{S_edge}. Then there exists a constant $C>0$, depending on $C_1$ and $C_2$, such that the normalized trace of the resolvent of $W^*W$ satisfies
\begin{equation}\label{traceimg}
\E^{\mathrm{W}} [ \Im m (z)] \leq CN^{-1/3}, 
\end{equation}
uniformly for all $z \in S_{\mathrm{edge}}$, for sufficiently large $N \geq N_0(C_1,C_2, \epsilon)$. 

Furthermore, consider any type-0 term $P_d \in \mathcal{P}_d$ ($d\geq 2$) of the form in (\ref{form}) with the number of off-diagonal Green function factors given in (\ref{d_o}) satisfying $d^o \geq 2$. Then, for any $\tau>0$, we have
\begin{align}\label{tracek}
|\E^{\mathrm{W}} [P_d(z)]| \leq N^{-1/3-\epsilon+\tau},
\end{align}
uniformly for all $z\in S_{\mathrm{edge}}$, for sufficiently large $N \geq N_0(C_1,C_2, \epsilon, \tau)$.
\end{lemma}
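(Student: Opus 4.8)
\textbf{Proof proposal for Lemma~\ref{lemma_trace}.}

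The plan is to exploit the exact determinantal/Pfaffian structure of the white Wishart ensemble recalled in Subsection~2.2, together with the quantitative kernel convergence in Proposition~\ref{kernel_diff}. First I would establish~\eqref{traceimg}. Writing $\Im m(z)=\frac{1}{N}\sum_{j=1}^N\frac{\eta}{(\lambda_j-E)^2+\eta^2}$ for $z=E+\ii\eta\in S_{\mathrm{edge}}$, I would separate the contribution of eigenvalues in the bulk (distance $\gtrsim N^{-2/3+\epsilon}$ from $E_+$) from those near the edge. The bulk eigenvalues are controlled by the deterministic bound $|\Im m(z)|\prec\Im\wt m(z)+ (N\eta)^{-1}\lesssim N^{-1/3}$ from the local law~\eqref{G_average} together with Lemma~\ref{sample_m}, since $\Im\wt m(z)\sim\eta/\sqrt{\kappa+\eta}\lesssim N^{-1/3}$ on $S_{\mathrm{edge}}$; however, because we need an expectation bound with no $N^{-\tau}$ loss, I would instead write $\E^{\mathrm W}[\Im m(z)]=\frac{1}{N}\int\frac{\eta}{(\lambda-E)^2+\eta^2}\,\rho_N^{(1)}(\lambda)\,\dd\lambda$ with $\rho_N^{(1)}(\lambda)=K_{N,\beta}(\lambda,\lambda)$ the one-point function, change variables to the edge scaling $\lambda=\wt\mu_N+\wt\sigma_N x$ (note $E_+=\wt\mu_N/N+O(N^{-1})$, $\wt\sigma_N\sim N^{1/3}$), and use Proposition~\ref{kernel_diff} to replace $\wt\sigma_N K_{N,\beta}(\wt\mu_N+\wt\sigma_N x,\cdot)=K^{\mathrm{edge}}_{N,\beta}(x,x)$ by the Airy kernel $K_{\mathrm{Airy}}(x,x)+\tfrac12\Ai(x)\int_{-\infty}^x\Ai$, up to $O(N^{-1/3})$ errors with Gaussian-type decay. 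The resulting integral $\frac{1}{N}\int \frac{\wt\sigma_N^{-1}\eta}{(\wt\sigma_N^{-1}(E-\wt\mu_N/N)-x)^2+(\wt\sigma_N^{-1}\eta)^2}\,|K_{\mathrm{Airy}}(x,x)|\,\dd x$ has a Poisson kernel of width $\wt\sigma_N^{-1}\eta\sim N^{-1/3}\eta\ll N^{-1}$ integrated against a bounded, integrable density (Lemma~\ref{lemma_airy_bound}), so it is $O(N^{-1}\cdot 1)$; the pieces of the density supported in the bulk (where the eigenvalue density is $O(N^{2/3})$) contribute, after the same Poisson-kernel argument, $O(N^{-1}\cdot N^{2/3}\cdot N^{-1/3}\eta\cdot\eta^{-1})=O(N^{-1/3})$ once one observes $\int\theta_\eta=1$. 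Collecting the pieces gives~\eqref{traceimg}.

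For~\eqref{tracek}, the key point is that any type-0 term $P_d$ with $d^o\ge 2$ is, after applying the local law~\eqref{G} to all but two off-diagonal factors, reduced to $O_\prec(\Psi^{d-2})$ times an averaged product of exactly two off-diagonal Green function entries of $W^*W$. I would therefore first use~\eqref{G} to bound all diagonal and all but two off-diagonal entries (and the centered factors) by their deterministic sizes — $|\wt m|\sim 1$, $|(1+\wt m)^{-1}|\sim 1$, centered factors by $\Psi$, extra off-diagonals by $\Psi$ — reducing matters to estimating quantities of the schematic form $N^{-\#\mathcal I}\sum_{\mathcal I}|G_{x_1y_1}G_{x_2y_2}|$ with $x_1\ne y_1$, $x_2\ne y_2$, where by matchedness each summation index appears exactly twice. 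Using the generalized Ward identities~\eqref{ward} (applied in whichever of the four block-forms is relevant) one reduces a sum of $|G_{ab}|^2$ over a free index to $\Im G_{aa}/\eta$, i.e.\ to $\Im\wt m/\eta$ plus error, and by Cauchy--Schwarz the cross term $\sum|G_{x_1y_1}G_{x_2y_2}|$ is controlled by $(\sum|G_{x_1y_1}|^2)^{1/2}(\sum|G_{x_2y_2}|^2)^{1/2}$. The net effect is that the two genuinely off-diagonal factors, averaged, contribute $O_\prec\big(\frac{\Im\wt m}{N\eta}\big)=O_\prec(\Psi^2)$ on $S_{\mathrm{edge}}$, times the $\Psi^{d-2}$ from the rest, giving $O_\prec(\Psi^d)$. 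Since on $S_{\mathrm{edge}}$ one has $\Psi^2\lesssim \frac{\Im\wt m}{N\eta}+\frac{1}{(N\eta)^2}\lesssim \frac{1}{N\sqrt{\kappa+\eta}\cdot\sqrt{\kappa+\eta}}+\cdots$; more carefully, $\Psi=\sqrt{\Im\wt m/(N\eta)}+1/(N\eta)$ with $\Im\wt m\sim\eta/\sqrt{\kappa+\eta}\lesssim\sqrt{\eta}$ and $N\eta\ge N^{\epsilon}$, so $\Psi\lesssim N^{-1/3-\epsilon/2+o(1)}$ there, whence $\Psi^d\le\Psi^2\le N^{-2/3-\epsilon+\tau}$ — in fact better than the claimed $N^{-1/3-\epsilon+\tau}$. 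To pass from the stochastic-domination bound $|P_d|\prec N^{-1/3-\epsilon}$ to the expectation bound~\eqref{tracek} I would invoke Lemma~\ref{dominant_prop}(3), using the deterministic bound~\eqref{deter_bound} $|G_{\mathfrak i\mathfrak j}|\le C'/\eta\le C'N$ which makes $|P_d|\le N^{C}$ surely, and noting the relevant expectations are not super-polynomially small (or, if needed, comparing against $\E[\Psi^d+N^{-\Gamma}]$ directly).

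The main obstacle is the passage, for the one-point-function integral in~\eqref{traceimg}, from the pointwise kernel asymptotics of Proposition~\ref{kernel_diff} (valid on $[L_0,\infty)$) to a bound on the full integral over $\R^+$: one must control the contribution of $\lambda$ far below $E_+$, i.e.\ $x\to-\infty$, where the Airy asymptotics are not available and the density grows. Here I would use the eigenvalue rigidity~\eqref{rigidity1} and the deterministic bound $\rho_N^{(1)}(\lambda)\le N\cdot(\text{MP density})(\lambda)+O_\prec(1)$ from the local law to bound this tail contribution by $\frac{1}{N}\int_{\mathrm{bulk}}\theta_\eta(\lambda-E)\,N\rho_{\mathrm{MP}}(\lambda)\,\dd\lambda$, and since $\theta_\eta$ is an approximate identity and $\rho_{\mathrm{MP}}$ is bounded away from the edge while $E$ is within $N^{-2/3+\epsilon}$ of $E_+$, Lemma~\ref{sample_m} gives this tail is $O(\sqrt{\kappa}+\eta)=O(N^{-1/3+\epsilon/2})$; a slightly more careful split at scale $N^{-2/3}$ recovers the clean $O(N^{-1/3})$. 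A secondary technical nuisance is the precise matching of the centering $E_+$ (built from $\varrho_N=M/N$, via $\mu_{\mathrm{MP},\varrho}$) with $\wt\mu_N/N$ and $\mu_N/N$, which differ by $O(N^{-1})\ll\eta$ and hence only shift the Poisson kernel by a negligible amount; I would record this discrepancy once at the start and absorb it into the $O(N^{-1})$ errors throughout.
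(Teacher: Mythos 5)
Your strategy for \eqref{traceimg} is essentially the paper's: write $\E^{\mathrm W}[\Im m]$ via the one-point function, pass to the edge scaling, use the boundedness of $K^{\mathrm{edge}}_{N,1}(x,x)$ on $[L_0,\infty)$ from Proposition~\ref{kernel_diff} for the near-edge window, and control the far tail by rigidity and the local law (the paper does the tail via a dyadic decomposition of $\{\lambda_j\le E-N^{-2/3}\}$ bounding each $\mathcal N_k$ through $\Im m_N$ at scale $3^kN^{-2/3}$, which is the ``more careful split'' you allude to). One arithmetic slip: the rescaled Lorentzian has width $N\eta/\wt\sigma_N$, not $\wt\sigma_N^{-1}\eta$, and after the Jacobian the near-edge piece is $O(1/\wt\sigma_N)=O(N^{-1/3})$, not $O(N^{-1})$; this does not change the conclusion but misattributes where the $N^{-1/3}$ comes from.

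For \eqref{tracek} there is a genuine gap. You claim that on $S_{\mathrm{edge}}$ one has $\Psi\lesssim N^{-1/3-\epsilon/2+o(1)}$, hence $\Psi^2\le N^{-2/3-\epsilon+\tau}$, ``better than claimed.'' This is false: $\Psi\ge (N\eta)^{-1}$ and $\eta$ ranges down to $N^{-1+\epsilon}$, so $\Psi$ can be as large as $N^{-\epsilon}$, and $\Psi^2$ as large as $N^{-2\epsilon}\gg N^{-1/3-\epsilon}$. (If $\Psi$ really were $O(N^{-1/3-\epsilon/2})$ on $S_{\mathrm{edge}}$, naive power counting would make the entire Green-function-comparison machinery of the paper unnecessary.) The same issue infects your intermediate step of replacing $\Im G_{aa}$ by $\Im\wt m$ ``plus error'': the local-law error converts $\frac{\Im m_N}{N\eta}$ into $\frac{\Im\wt m}{N\eta}+O_\prec\big(\frac{1}{(N\eta)^2}\big)$, and $\frac{1}{(N\eta)^2}$ can be of order $N^{-2\epsilon}$, which swamps the target $N^{-1/3-\epsilon+\tau}$. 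The correct route --- and the one the paper takes --- is to \emph{not} pass to the deterministic approximation: after Young/Cauchy--Schwarz and the generalized Ward identities one keeps the random bound $|P_d|\prec \frac{\Im m_N(z)}{N\eta}+\frac1N$, takes expectations via Lemma~\ref{dominant_prop}(3), and only then inserts the expectation bound $\E^{\mathrm W}[\Im m]\le CN^{-1/3}$ from the first part of the lemma, which yields $\frac{CN^{-1/3}}{N\eta}+\frac1N\le N^{-1/3-\epsilon+\tau}$ since $N\eta\ge N^{\epsilon}$. You gesture at this at the very end, but as written your chain of inequalities contains a false step and does not reach \eqref{tracek}.
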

We remark that in this section, we will always choose $z \in S_{\mathrm{edge}} \subset S$, unlike to Section \ref{sec:expand} when $z \in S$.

Armed with Proposition \ref{lemma_trace}, we are now ready to prove Proposition \ref{lemma_trace_sample}  using recursive comparisons based on Propositions \ref{unmatch_lemma} and \ref{lemma_expand_type}.

\begin{proof}[Proof of Proposition \ref{lemma_trace_sample}]
Consider a type-0 term $P_d \in \mathcal{P}_d$ of the form in (\ref{form_ab}) of degree $d \geq 2$ with fixed $n$ given in (\ref{n_number}) and $d^o \geq 2$ given in (\ref{d_o}).  If $d \geq D$ for some large $D$ depending on $\epsilon$, we can prove~(\ref{tracek_sample}) using the initial estimate in (\ref{localaw_0}) by power counting. Else if $d$ is smaller, we estimate $\E[P_d]$ by iteratively using comparisons and the corresponding estimates for the white Wishart ensemble in~(\ref{tracek}).

We start the iteration by denoting a given type-0 term $P_d$ ($d \geq 2$) of the form in (\ref{form_ab}) as $P_d\equiv P^{(1)}_{d_1}$, where the superscript $(1)$ and degree $d\equiv d_1$ will be used to indicate the initial step. We hence consider a term of the form
\begin{align}\label{P_d_s_1}
P^{(1)}_{d_1} \equiv P^{(1)}_{d_1} (t,z): \qquad  \frac{1}{N^{\# \mathcal I_1+2}} \sum_{\mathcal{I}_1,\alpha_1,b_1} c_{\alpha_1,b_1,\mathcal{I}_1} \Big(\prod^{n_1^o}_{i=1} G_{x_i y_i}\Big) \Big(\ud{G}-\wt m\Big)^{n_1^{g}} \Big( \ud{\G}+\frac{1}{1+\wt m}\Big)^{n_1^{\mathfrak g}}\,,
\end{align}
where $\mathcal{I}_1$ is a set of free summation indices, the coefficients $\{c_{\alpha_1,b_1,\mathcal{I}_1}\}$ are uniformly bounded, each $x_i, y_i$ represents a free summation index in $\mathcal{I}_1$, with $x_i \neq y_i$ and $\nn(\mathfrak{v}_j)=2$ for any $\mathfrak{v}_j \in \mathcal{I}_1$. We also set
$$n_1:=n_1^o+n_1^{g}+n_1^{\mathfrak g}; \qquad d^o_1:=\#\{ 1 \leq i\leq n_1^o: x_i \neq y_i\}$$ 
as in (\ref{n_number}) and (\ref{d_o}). In particular, we have $d_1 \geq d^o_1 \geq 2$.

We next compute the derivative of $\E[P^{(1)}_{d_1}]$ with respect to time under the interpolation flow in~\eqref{dyson_flow}, similarly as in (\ref{step00}) and (\ref{step0}). We then find that
\begin{align}\label{late_0}
\frac{\dd}{\dd t}\E[P^{(1)}_{d_1}]=&\sum_{\alpha_2=N+1}^{N+M} \sum_{b_2=1}^N \E\Big[ \dot{h}_{\alpha_2 b_2} \frac{\partial P^{(1)}_{d_1}}{\partial h_{\alpha_2 b_2}} \Big]=-\frac{1}{2} \sum_{\alpha_2,b_2}  \sum_{p+1\geq 3}^{4} \frac{1}{p!} \frac{s^{(p+1)}_{ \alpha_2 b_2}}{N^{\frac{p+1}{2}}} \E \Big[ \frac{ \partial^{p+1}  P^{(1)}_{d_1} }{\partial h^{p+1}_{\alpha_2 b_2} } \Big]+O_{\prec}(N^{-1/2}),
\end{align} 
where $\alpha_2,b_2$ are fresh summation indices (the subscript $2$ indicates the iteration step), and $\{s^{(p+1)}_{ \alpha_2 b_2}\}$ are the uniformly bounded cumulants of rescaled matrix entries given in (\ref{t_cumu}). 

From (\ref{dH}), all the third order terms for $p+1=3$ on the right side of (\ref{late_0}) are of the form in (\ref{form}) up to a factor $\sqrt{N}$. Since the fresh indices have $\nn(\alpha_2)=\nn(b_2)=3$, they are unmatched from Definition~\ref{unmatch_def}. Using Proposition~\ref{unmatch_lemma}, these term are hence bounded by $O_{\prec}(N^{-1/2}+\sqrt{N} \Psi^D)$. 

Next, it suffices to estimate the fourth order terms for $p+1=4$ on the right side of (\ref{late_0}). Since $\alpha_2$ and $b_2$ are freshly added indices, using the differentiation rule in (\ref{dH}), the fourth order terms are at most $(n_1+4)^4$ matched terms of degrees at least $d_1+1$, with $\nn(\alpha_2)=\nn(b_2)=4$, $\nn(\alpha_1)=\nn(b_1)=0$, and $\nn(\mathfrak{v}_j)=2$ for any $\mathfrak{v}_j \in \mathcal{I}_1$. The number of Green function entries in the product of each term is $n_1+4$ and the number of off-diagonal Green function entries in each term is at least $d^o_1+1$.

To be precise, these fourth order terms can be written out in the abstract form
\begin{align}\label{type_ab_p_s}
  \frac{1}{N^{\# \mathcal I_2+4}} \sum_{\mathcal{I}_2,a_1,b_1, a_2,b_2} c_{a_1,b_1,a_2,b_2,\mathcal{I}_2} \Big(\prod^{n_2^o}_{i=1} G_{x_i y_i}\Big) \Big(\ud{G}-\wt m\Big)^{n_2^{g}} \Big( \ud{\G}+\frac{1}{1+\wt m}\Big)^{n_2^{\mathfrak g}}\,,
\end{align}
where $\mathcal{I}_2$ is a set of free summation indices, the coefficients $\{c_{a_1,b_1,a_2,b_2,\mathcal{I}_2}\}$ are uniformly bounded, $x_i, y_i$ stand for a free summation index $\alpha_2$, $b_2$ or some element in $\mathcal{I}_2$, with $\nn(\alpha_2)=\nn(b_2)=4$, $\nn(\mathfrak{v}_j)=2$ for any $\mathfrak{v}_j \in \mathcal{I}_2$. Moreover, $x_i \neq y_i$ for any $ 1 \leq i\leq n^o_2$ unless $x_i=y_i=\alpha_2$ or $x_i=y_i=b_2$. The number of Green function entries and the number of off-diagonal Green function entries in a term in (\ref{type_ab_p_s}) are denoted by $n_2$ and $d^o_2$, respectively, as in (\ref{n_number}) and (\ref{d_o}). The degree of such a term, denoted by $d_2$, is defined as in (\ref{degree_0}).

Recall the definition of type-$\alpha b$, type-$b$ and type-0 terms from Definition \ref{def_type_AB} for the form in (\ref{form_ab}). The definitions can be adapted naturally with respect to the fresh indices $\alpha_2$ and $b_2$ for the form given in (\ref{type_ab_p_s}). In particular, the corresponding type-0 term is defined to be of the form in (\ref{type_ab_p_s}), with $x_i \neq y_i~(1\leq i\leq n_2^o)$ and each $x_i$, $y_i$ represents an element in $\mathcal{I}_2$ with $\nn(\mathfrak{v}_j)=2$ for any $\mathfrak{v}_j \in \mathcal{I}_2$. We remark that though $\nn(\alpha_1)=\nn(b_1)=\nn(\alpha_2)=\nn(b_2)=0$, we keep these summation indices to record the iteration step and emphasize the inheritance from the form in (\ref{P_d_s_1}). We use $\mathcal{P}^{(2)}_{d_2}$ to denote the collection of the corresponding type-0 terms of the form in~(\ref{type_ab_p_s}). 

Thus the fourth order terms for $p+1=4$ on the right side of (\ref{late_0}) consist of at most $4(n_1+1)^4$ type-$\alpha b$ terms of the form in (\ref{type_ab_p_s}) of degrees $d_2 \geq d_1 +1$, with $n_2= n_1+4 $ and $d^o_2 \geq d^o_1+1$. Using Proposition~\ref{lemma_expand_type}, we expand each of these type-$\alpha b$ terms as a sum of finitely many type-0 terms of the form in (\ref{type_ab_p_s}) of degrees at least $d_1+1$, and rewrite (\ref{late_0}) in short as
\begin{align}\label{late3}
\frac{\dd}{\dd t}\E[P^{(1)}_{d_1}]= \sum_{\substack{ P^{(2)}_{d_{2}} \in \mathcal{P}^{(2)}_{d_{2}}\\d_1+1\leq d_{2} < D}} \E[ P^{(2)}_{d_{2}}]+O_{\prec}\big(N^{-1/2}+\sqrt{N}\Psi^D\big)\,,
\end{align}
uniformly in $t \geq 0$ and $z \in S_{\mathrm{edge}}$, where the sum above is over finitely many (depending on $D$ and $n_1$) type-0 terms, with $n_2 \leq n_1+4D$, $d_2 \geq d_1 +1$ and $d^o_2 \geq d^o_1+1$. We now choose $D$ sufficiently large, depending on $\epsilon$ in (\ref{S_edge}), such that $\sqrt{N}\Psi^D \leq N^{-1/2}$.

Integrating (\ref{late3}) over $ [t', T]$ for any $0\leq t' \leq T=8 \log N$ as in (\ref{integrate_imn}) and applying the estimates in~(\ref{localaw_0}) on the resulting type-0 terms on the right side, we find that
\begin{align}\label{previous_d_1}
\big|\E[P^{(1)}_{d_{1}}(T,z)]-\E[P^{(1)}_{d_{1}}(t',z)]\big|=&O_{\prec}\big(\log N (\Psi^{d_1+1}+N^{-1/2})\big)\,,
\end{align}
uniformly in $t'\in [0,T]$ and $z \in S_{\mathrm{edge}}$.

For $T=8 \log N$, the Green function $G(T,z)$ is close to the corresponding quantity for the White Wishart ensemble, denoted by $G^{W}$; see (\ref{approxxxx}). In combination with the estimate in (\ref{tracek}), we find that
$$\big|\E[P^{(1)}_{d_1}(T,z)]\big|=O_{\prec}(N^{-1/3-\epsilon}), \qquad d^o_1 \geq 2,$$
uniformly for any $z \in S_{\mathrm{edge}}$. Therefore, we have from (\ref{previous_d_1}) that
\begin{align}\label{previous_d_2}
\big|\E[P^{(1)}_{d_{1}}(t',z)]\big|=&O_{\prec}\big( \log N \Psi^{d_1+1}+N^{-1/3-\epsilon}\big)\,,
\end{align}
uniformly in $t'\in [0,T]$ and $z \in S_{\mathrm{edge}}$. In this way, we improve the estimate on the size of $\E[P^{(1)}_{d_{1}}]$ by one order in power counting, compared with the initial estimate in (\ref{localaw_0}). We can further apply the above arguments on the resulting type-0 terms on the right side of (\ref{late3}) to get a finer estimate. We next discuss the detailed iteration process.

For any $s \geq 1$, we define a type-0 term in the $s$-th iteration step, denoted by $P^{(s)}_{d_s}$, of the form
\begin{align}\label{P_ds}
 \frac{1}{N^{\# \mathcal I_s+2s}} \sum_{\mathcal{I}_s, \alpha_1,b_1, \ldots, \alpha_s, b_s} c_{\alpha_1,b_1, \ldots, \alpha_s,b_s,\mathcal{I}_s}  \Big(\prod^{n_s^o}_{i=1} G_{x_i y_i}\Big) \Big(\ud{G}-\wt m\Big)^{n_s^{g}} \Big( \ud{\G}+\frac{1}{1+\wt m}\Big)^{n_s^{\mathfrak g}},
 \end{align}
where $\mathcal{I}_s$ is a set of free summation indices, the coefficients $\{c_{\alpha_1,b_1, \ldots, \alpha_s,b_s,\mathcal{I}_s}\}$ are uniformly bounded, each $x_i, y_i~(1 \leq i\leq n_2^o)$ represents a free summation index in $\mathcal{I}_s$, with $x_i \neq y_i$, and $\nn(\mathfrak{v}_j)=2$ for any $\mathfrak{v}_j \in \mathcal{I}_s$. We remark that, though $\nn(\alpha_l)=\nn(b_l)=0$ for any $1\leq l\leq s$, we keep $\alpha_l$ and $b_l$ in the notation to emphasize the inheritance from $\alpha_l$ and $b_l$ in the $l$-th iteration step. The degree of a term in (\ref{P_ds}), $d_s$, is defined as in (\ref{degree_0}) and $n_s$, $d^o_s$ are defined as in (\ref{n_number}) and (\ref{d_o}). We use $\mathcal{P}^{(s)}_{d_s}$ to denote the collection of the corresponding type-0 terms of the form in (\ref{P_ds}) in the $s$-th iteration step. 

Next, we take the time derivative of $\E[P^{(s)}_{d_s}]$ for any $P^{(s)}_{d_s} \in \mathcal{P}^{(s)}_{d_s}~(s\geq 1)$ with $d_s \geq d^o_s \geq 2$, similarly as in (\ref{late_0}). That is, 
\begin{align}\label{rhs2}
\frac{\dd}{\dd t}\E[P^{(s)}_{d_s}]=&\sum_{\alpha_{s+1}=N+1}^{N+M} \sum_{b_{s+1}=1}^N \E\Big[ \dot{h}_{\alpha_{s+1} b_{s+1}} \frac{\partial P^{(s)}_{d_s}}{\partial h_{\alpha_{s+1} b_{s+1}}} \Big]\nonumber\\
=&-\frac{1}{2} \sum_{\alpha_{s+1},b_{s+1}}  \sum_{p+1\geq 3}^{4} \frac{1}{p!} \frac{s^{(p+1)}_{ \alpha_{s+1} b_{s+1}}}{N^{\frac{p+1}{2}}} \E \Big[ \frac{ \partial^{p+1}  P^{(s)}_{d_s} }{\partial h^{p+1}_{\alpha_{s+1} b_{s+1}} } \Big]+O_{\prec}(N^{-1/2}),
\end{align}
where $\alpha_{s+1}$ and $b_{s+1}$ are fresh summation indices in the $s$-th iteration step, and $\{s^{(p+1)}_{ \alpha_{s+1} b_{s+1}}\}$ are time-dependent cumulants of matrix entries given in (\ref{t_cumu}). Using Proposition \ref{unmatch_lemma} and Proposition \ref{lemma_expand_type}, we expand the resulting terms on the right side of (\ref{rhs2}), similarly to (\ref{late3}),
\begin{align}\label{late22}
\frac{\dd}{\dd t}\E[P^{(s)}_{d_s}(t,z)]= \sum_{\substack{ P^{(s+1)}_{d_{s+1}} \in \mathcal{P}^{(s+1)}_{d_{s+1}}\\d_s+1\leq d_{s+1} < D}} \E[ P^{(s+1)}_{d_{s+1}}(t,z)]+O_{\prec}\big(N^{-1/2}+\sqrt{N}\Psi^D\big)\,,
\end{align}
uniformly in $t \geq 0$ and $z \in S_{\mathrm{edge}}$, where the sum above are over finitely many (depending on $D$,~$n_s$) type-0 terms of the form in (\ref{P_ds}) in the $(s+1)$-th step, with $n_{s+1} \leq n_s +4D$, $d_{s+1} \geq d_s+1$ and $d^o_{s+1} \geq d^o_s+1$. 

Next, we integrate both sides of (\ref{late22}) over $ [t', T]$ for any $0\leq t' \leq T=8 \log N$. Indeed, from the approximation in (\ref{approxxxx}), the local law in (\ref{G}) together with the estimate in (\ref{tracek}) for the white Wishart ensemble, we have
\begin{align}\label{approx_gue}
\big|\E[P^{(s)}_{d_s}(T,z)]\big|=O_{\prec}(N^{-1/3-\epsilon})\,, \qquad d_s \geq d^o_s \geq 2,
\end{align}
uniformly in $z\in S_{\mathrm{edge}}$. We hence obtain from~(\ref{late22}) and~(\ref{approx_gue}) that, for any $P^{(s)}_{d_s} \in \mathcal{P}^{(s)}_{d_s}$ with $d_s \geq d^o_s \geq 2$,
\begin{align}\label{temp_eq}
\E[P^{(s)}_{d_s}(t',z)]=&\sum_{ \substack{ P^{(s+1)}_{d_{s+1}} \in \mathcal{P}^{(s+1)}_{d_{s+1}} \\d_s+1 \leq d_{s+1} <D}}  \int_{t'}^{T} \E[ P^{(s+1)}_{d_{s+1}}(t,z)] \dd t +O_{\prec}\Big(\log N (N^{-1/2}+\Psi^D)+N^{-1/3-\epsilon}\Big),
\end{align}
uniformly in $t'\in [0,T]$ and $z \in S_{\mathrm{edge}}$.

Now, we are ready to iterate using ~\eqref{temp_eq}. In the first step, we start by $P_{d_1}^{(1)}(t,z)$ in (\ref{P_d_s_1}) with $d_1 \geq d^o_1 \geq 2$ and have the estimate in~(\ref{temp_eq}) for $s=1$. The number of the terms $P^{(2)}_{d_2} \in \mathcal{P}^{(2)}_{d_2}$ with $d_2 \geq d_1+1$ on the right side of (\ref{temp_eq}) is finite and depends on $n_1$ and $D$.  Then we further estimate each of these type-0 terms $P^{(2)}_{d_2}$ using (\ref{temp_eq}) for $s=2$ as the second step. The resulting type-0 terms $P^{(3)}_{d_3} \in \mathcal{P}^{(2)}_{d_3}$ with $d_3 \geq d_2+1 \geq d_1+2$ will be estimated again using (\ref{temp_eq}) for $s=3$ as the third step. Since in each step of using (\ref{temp_eq}), the degrees of the resulting type-0 terms $P^{(s+1)}_{d_{s+1}} \in \mathcal{P}^{(s+1)}_{d_{s+1}} $ on the right side are increased by at least one, we have $d_{s+1} \geq d_1+s$. We hence stop the iteration process at step $s=s_0:=D-d_1$. For any $P^{(s_0)}_{d_{s_0}} \in \mathcal{P}^{(s_0)}_{d_{s_0}}$ with $d_{s_0} \geq D-1$, the resulting terms $P^{(s_0+1)}_{d_{s_0+1}} \in \mathcal{P}^{(s_0+1)}_{d_{s_0+1}}$ on the right side of (\ref{temp_eq}) have degrees $d_{s_0+1} \geq D$. The number of these terms is finite and depends on $D,n_{s_0}$. Using the local law in (\ref{G}), all these terms can be bounded by $O_{\prec}(\Psi^D+N^{-1})$. Therefore, for any $P^{(s_0)}_{d_{s_0}} \in \mathcal{P}^{(s_0)}_{d_{s_0}}$ with $d_{s_0} \geq D-1$ and $d^o_{s_0} \geq 2$, 
$$\big|\E[P^{(s_0)}_{d_{s_0}}(t',z)]\big|=O_{\prec}\Big(\log N (N^{-1/2}+\Psi^D)+N^{-1/3-\epsilon}\Big)\,.$$

We hence apply the above estimate back to the previous step, \ie (\ref{temp_eq}) for $s=s_0-1$. We then obtain a similar estimate for any $P^{(s_0-1)}_{d_{s_0-1}} \in \mathcal{P}^{(s_0-1)}_{d_{s_0-1}}$ with $d_{s_0-1} \geq D-2$ and $d^o_{s_0-1}\geq 2$, \ie
$$\big|\E[P^{(s_0-1)}_{d_{s_0-1}}(t',z)]\big|=O_{\prec}\Big( (\log N)^2 (N^{-1/2}+\Psi^D)+N^{-1/3-\epsilon} \log N\Big)\,,$$
uniformly in $t'\in [0,T]$ and $z \in S_{\mathrm{edge}}$.

Repeating the above process until $s=1$, we hence obtain the estimate of $P^{(1)}_{d_{1}}$ with $d_1 \geq d^o_1 \geq 2$,
$$\big|\E[P^{(1)}_{d_{1}}(t',z)]\big|=O_{\prec}\Big((N^{-1/3-\epsilon}+ \Psi^D) (\log N)^{D}\Big),$$
uniformly in $t' \in[0,T]$ and $z \in S_{\mathrm{edge}}$. By choosing $D$ sufficiently large depending on $\epsilon$, we prove~(\ref{tracek_sample}) for~$t' \in [0,T]$. If $t' \geq T$, a similar estimate can be obtained by using (\ref{approxxxx}) and (\ref{approx_gue}). We have hence finished the proof of Proposition \ref{lemma_trace_sample}.
\end{proof}

To end this section, we give the proof of Lemma \ref{lemma_trace} using the uniform convergence of the correlation kernel of the white Wishart ensemble at the upper edge, along with the the local Marchenko--Pastur law and the eigenvalue rigidity.

\begin{proof}[Proof of Lemma \ref{lemma_trace}]
Let $(\lambda_j)_j$ be the eigenvalues of the white Wishart matrix $W^*W$. Using the spectral decomposition, for any $z=E+\ii \eta \in S_{\mathrm{edge}}$, we write
\begin{align}\label{type_0_temp}
\E^{W}[\Im m(z)]=\frac{\eta}{N} \sum_{j=1}^N \frac{1}{|\lambda_j-E|^2+\eta^2}=\frac{\eta}{N} \E \Big[ \Big(  \sum_{|\lambda_j-E| \leq l} + \sum_{|\lambda_j-E| \geq l} \Big) \frac{1}{|\lambda_j-E|^2+\eta^2}\Big]:=\mathrm{I}+\mathrm{II},
\end{align}
where we choose $l=N^{-2/3}$ for later purposes. We first estimate the term I using the convergence of the correlation kernel at the edge in Proposition \ref{kernel_diff}. From (\ref{pfaffian}), the one-point correlation function of the eigenvalue process $(\nu_j)$ of the rescaled white Wishart matrix $N W^*W$ is given by 
\begin{align}
p_{N,1}^{(1)}(\nu)=\frac{1}{2} \Tr S_{N,1}(\nu,\nu)=K_{N,1}(\nu,\nu).
\end{align}
Then the one-point correlation function of the eigenvalue process $(\lambda_j)$ of $W^*W$ is given by 
$$\wt p_{N,1}^{(1)}(\lambda)=NK_{N,1}(N \lambda,N \lambda),$$
and the term I in (\ref{type_0_temp}) can be written as
$$\mathrm{I}=\frac{\eta}{N} \int_{|\lambda-E| \leq l}  \frac{  \wt p_{N,1}^{(1)}(\lambda)}{(\lambda-E)^2+\eta^2} \dd \lambda=\eta \int_{E-l}^{E+l}  \frac{ K_{N,1}(N \lambda,N \lambda)}{(\lambda-E)^2+\eta^2} \dd \lambda.$$
Changing variable $N \lambda=\wt \mu_N+\wt \sigma_N x$ as in (\ref{normalize}), we write the term I using (\ref{edge}) as
\begin{align}\label{lemma_temp_1}
\mathrm{I}=\frac{N \eta}{\wt \sigma_N^2} \int_{\frac{N}{\wt \sigma_N} \big(E-\frac{\wt \mu_N}{N}-l\big)}^{\frac{N}{\wt \sigma_N} \big(E-\frac{\wt \mu_N}{N}+l\big)}  \frac{ K^{\mathrm{edge}}_{N,1}(x,x)}{\big(x-\frac{N}{\wt \sigma_N} (E-\frac{\wt \mu_N}{N})\big)^2+\big(\frac{N}{\wt \sigma_N} \eta\big)^2} \dd x.
\end{align}
From (\ref{some_parameter}) and (\ref{condition}), we find that $\wt \mu_N \sim N$ and $\wt \sigma_N \sim N^{1/3}$. To be precise, we have
\begin{align}\label{some_constant}
\frac{\wt \mu_N}{N}=\big(1+\sqrt{\varrho}\big)^2+O(N^{-1})=E_++O(N^{-1}); \qquad \frac{N}{\wt \sigma_N}=\frac{N^{2/3}}{(1+\sqrt{\varrho}) \big(1+\frac{1}{\sqrt{\varrho}}\big)^{\frac{1}{3}}}+O(N^{-1/3}).
\end{align}

Since $E-E_+ \geq -C_1 N^{-2/3}$ for $E+\ii \eta \in S_{\mathrm{edge}}(\epsilon,C_1,C_2)$ and $l = N^{-2/3}$, it is not hard to check that the lower integration bound in (\ref{lemma_temp_1}) satisfies $\frac{N}{\wt \sigma_N} (E-\frac{ \wt \mu_N}{N}-l )\geq L_0$ for some constant $L_0 \in \R$. By Proposition~\ref{kernel_diff} and Lemma \ref{lemma_airy_bound}, there hence exists a constant $C \equiv C(L_0)$ such that 
$$\big|K^{\mathrm{edge}}_{N,1}(x,x) \big|  \leq C, \qquad \mbox{ for any } x \in [L_0,\infty). $$
Then using the fact that $K^{\mathrm{edge}}_{N,1}(x,x) \geq 0$, we have
\begin{align}\label{part_1}
\mathrm{I} \leq&\frac{C N \eta}{\wt \sigma_N^2} \int_{L_0}^{\infty}  \frac{1}{\big(x-\frac{N}{\wt \sigma_N} (E-\frac{ \wt \mu_N}{N})\big)^2+\big(\frac{N}{ \wt \sigma_N} \eta\big)^2} \dd x \leq \frac{C N \eta}{\wt \sigma_N^2} \int_{-\infty}^{\infty}  \frac{1}{x^2+\big(\frac{N}{\wt \sigma_N} \eta\big)^2} \dd x \leq \frac{C\pi}{\wt \sigma_N} =O(N^{-\frac{1}{3}}).
\end{align}

Next we estimate the second term II on the right side of (\ref{type_0_temp}), which can be written as
\begin{align}
\mathrm{II}= \frac{\eta}{N} \sum_{\lambda_j \geq E+l} \frac{1}{|\lambda_j-E|^2+\eta^2}+\frac{\eta}{N} \sum_{\lambda_j \leq E-l} \frac{1}{|\lambda_j-E|^2+\eta^2}=:\mathrm{II}_1+\mathrm{II}_2,
\end{align}
with $l=N^{-2/3}$. Using the eigenvalue rigidity in (\ref{rigidity1}) and that $-C_1 N^{-2/3} \leq E-E_+ \leq C_2 N^{-2/3+\epsilon}$, we find that for any small $\tau>0$ and large $\Gamma>0$, $\#\{ j: \lambda_j \geq E+l\} \leq N^{\tau}$ holds with probability bigger than $1-N^{-\Gamma}$. Then the first part $\mathrm{II}_1$ can be bounded by
\begin{align}\label{II_2}
\mathrm{II}_1=\frac{\eta}{N} \sum_{\lambda_j \geq E+l} \frac{1}{|\lambda_j-E|^2+\eta^2} \leq \frac{\eta N^{\tau}}{Nl^2},
\end{align}
with probability bigger than $1-N^{-\Gamma}$. The second part $\mathrm{II}_2$ can be written as
\begin{align}\label{lemma_temp}
\mathrm{II}_2=\frac{\eta}{N} \sum_{\lambda_j \leq E-l} \frac{1}{|\lambda_j-E|^2+\eta^2} \leq  \frac{\eta}{N}  \sum_{k=0}^{\infty}  \frac{1}{3^{2k} l^2 } \mathcal{N}_k,
\end{align}
where 
$$\mathcal{N}_k:=\#\{j: \lambda_j \in \mathcal{I}_k\}, \qquad \mathcal{I}_k:=(E-3^{k+1} l , E-3^{k} l ], \qquad k \geq 0.$$
We next estimate $\mathcal{N}_k$ using the local law in (\ref{G_average}). Note that
\begin{align}\label{append_temp2}
\Im m_N \big(E-2 \cdot 3^{k} l+\ii (3^{k} l) \big)=\frac{1}{N} \sum_{i=1}^N \frac{3^k l }{ |\lambda_i-(E-2 \cdot 3^k l)|^2+(3^k l)^2} \geq \frac{1}{N} \frac{\mathcal{N}_k}{2 \cdot 3^k l}\,.
\end{align}
Using the local law in (\ref{G_average}) to control the left side above, we have, for any small $\tau>0$ and large $\Gamma>0$,
\begin{align}
\Im m_N \big (E-2 \cdot 3^{k} l+\ii (3^{k} l) \big) \leq & \Im \wt m(E-2 \cdot 3^{k} l+\ii 3^{k} l)+\frac{N^{\tau}}{ N 3^{k} l}\nonumber\\
\leq& C \sqrt{3^k l+|E-2 \cdot 3^{k} l-E_+|}+\frac{N^{\tau}}{ N 3^{k} l} \leq C \Big( \sqrt{3^k l}+\frac{N^{\tau}}{ N 3^{k} l}+N^{-1/3+\epsilon}\Big)\,,\nonumber
\end{align}
with probability bigger than $1-N^{-\Gamma}$. We then obtain from (\ref{append_temp2}) that
\begin{align}
\mathcal{N}_{k} \leq C\Big( (3^k l)^{3/2}N+N^{\tau} +3^k l N^{2/3+\epsilon}\Big)\,,\nonumber
\end{align}
with probability bigger than $1-N^{-\Gamma}$. Combining with(\ref{lemma_temp}), we find
\begin{align}
\mathrm{II}_2 \leq & \frac{C \eta}{N}  \sum_{k=0}^{\infty}  \frac{1}{3^{2k} l^2 } \Big( (3^k l)^{3/2}N+N^{\tau} +3^k l N^{2/3+\epsilon}\Big) \leq C' \Big(\frac{\eta}{\sqrt{l}}+\frac{N^{\tau }\eta}{N l^2} +\frac{N^{\epsilon} \eta}{N^{1/3}l}\Big),\nonumber
\end{align}
with probability bigger than $1-N^{-\Gamma}$. Therefore, combining with (\ref{II_2}) and choosing $\tau<\epsilon$, we have
\begin{align}\label{part_2}
\mathrm{II} \leq C' \Big(\frac{\eta}{\sqrt{l}}+\frac{N^{ \tau }\eta}{N l^2} +\frac{N^{\epsilon} \eta}{N^{1/3}l}\Big)=O(N^{-\frac{1}{3}}),
\end{align}
with probability bigger than $1-N^{-\Gamma}$, using that $l=N^{-2/3}$, $N^{-1+\epsilon} \leq \eta \leq N^{-2/3-\epsilon}$. Hence, combining (\ref{part_1}) and (\ref{part_2}), we have proved~(\ref{traceimg}).

Finally, we consider any type-0 term $P_d(z) \in \mathcal{P}_d(z)$ of the form in (\ref{form}) for the white Wishart ensemble, where the number of off-diagonal Green function entries in the product $\prod_{i=1}^{n^o}G_{x_i y_i}$, denoted by $d^o$ as in (\ref{d_o}), is at least two. Using the local law in (\ref{G}) and that the coefficients $\{c_{\mathcal{I}}\}$ are uniformly bounded, we find
\begin{align}\label{rain_1}
|P_d(z) | \prec (\Psi)^{n^g+n^{\mathfrak{g}}} \frac{1}{N^{\#\mathcal{I}} } \sum_{\mathcal{I}} \prod^{n^o}_{i=1} \big| G_{x_i y_i}(z)\big|,
\end{align}
where each $x_i$ and $y_i$ represents some summation index in $\mathcal{I}=\{\mathfrak{v}_j\}$, with $\nn(\mathfrak{v}_j)=2$ and $x_i \neq y_i$. In particular, we have $n^o=d^o \geq 2$. From the local law in (\ref{G}) and Young's inequality, we have
\begin{align}\label{rain_2}
|P_d(z) | \prec \Psi^{d-2} \frac{1}{N^2} \sum_{\mathfrak{i,j}=1}^{N+M} |G_{\mathfrak{ij}}(z)|^2, \qquad d=d^o+n^g+n^{\mathfrak{g}}.
\end{align}
Using the generalized Ward identities in (\ref{ward}), in combination with (\ref{hermitization_inverse}), (\ref{gamma_N}) and that $|m| \prec 1$, $\|X^*X\|_2 \prec 1$, we conclude that, for any $z \in S_{\mathrm{edge}}$,
\begin{align}\label{ward_id}
\frac{1}{N^2} \sum_{\mathfrak{i}, \mathfrak{j}=1}^{N+M} |G_{\mathfrak{ij}}(z)|^2 \prec \frac{\Im m(z)}{N \eta}+\frac{1}{N}.
\end{align}
Then for any $d\geq 2$ and $z \in S_{\mathrm{edge}}$, we have 
$$|P_d(z)| \prec \frac{\Im m_N(z)}{N \eta} +\frac{1}{N}.$$
After taking the expectation, using statement (3) in Lemma \ref{dominant_prop} in combination with (\ref{traceimg}), we have
$$|\E[P_d(z)] |\prec \frac{1}{N \eta }\E[\Im m_N(z)]+\frac{1}{N}=O_{\prec}(N^{-1/3-\epsilon}).$$
We hence complete the proof of Lemma \ref{lemma_trace}.
\end{proof}

\section{Green function comparison theorem: Proof of Theorem \ref{green_comparison}}\label{sec:Fneq1}
In this section, we extend the ideas in the proof of Proposition \ref{GCT_mn} with $F(x)=x$ to prove the Green function comparison theorem, Theorem \ref{green_comparison}, for smooth and uniformly bounded general functions $F$.

\begin{proof}[Proof of Theorem \ref{green_comparison}]

For fixed small $\epsilon>0$ and fixed $C_1,C_2>0$, let 
\begin{align}\label{z_set}
N^{-1+\epsilon} \leq \eta \leq N^{-2/3-\epsilon}, \qquad -C_1N^{-2/3} \leq \kappa_1< \kappa_2 \leq C_2 N^{-2/3+\epsilon}.
\end{align}
For notational simplicity, we introduce for short
\begin{equation}\label{X}
\mathcal{X} \equiv \mathcal{X}(t):=N\int_{\kappa_1}^{\kappa_2} \Im m_N(t,E_++x+\ii \eta) \dd x, \qquad t \in \R^+,
\end{equation}
with $E_+$ given in (\ref{endpoint}). Let $F$ be a smooth function with uniformly bounded derivatives. Differentiating $\E[F(\X(t))]$ with respected to $t$, we obtain (\cf (\ref{step00}))
\begin{align}\label{diff_fx}
\frac{\dd }{\dd t}\E [F(\X)]=&\E \Big[F'(\X)\Im \int_{\kappa_1}^{\kappa_2} \sum_{v=1}^{N} \frac{\dd G_{vv}(t,E_++x+\ii \eta)}{\dd t}  \dd x\Big]
=\E \Big[F'(\X)\Im \int_{\kappa_1}^{\kappa_2}  \Big( \sum_{v, \alpha,b} -2 \dot{h}_{\alpha b} G_{\alpha v} G_{v b}  \Big) \dd x\Big]\nonumber\\
=&\E \Big[F'(\X)\Im \int_{\kappa_1}^{\kappa_2}  \Big( \sum_{ \alpha,b} -2 \dot{h}_{\alpha b}  \frac{\dd }{\dd x} (G_{\alpha b}) \Big) \dd x\Big]=-2\E \Big[ F'(\X) \sum_{\alpha,b}  \dot{h}_{ \alpha b}  \Delta \Im G_{\alpha b}\Big],
\end{align}
where the third step follows from the definition of the Green function $G$ in (\ref{blockG}), and in the last step we abbreviate, for any function $P\,:\,\R^+ \times \C \setminus \R \longrightarrow\C$,
\begin{align}\label{dim}
\Delta \Im P \equiv (\Delta \Im P) (t,z_1,z_2):=\Im P(t,z_2)-\Im P(t,z_1),
\end{align}
with $t \in \R^+$, and
$$z_1:=E_++\kappa_1+\ii \eta \in \C \setminus \R,\qquad z_2:=E_++\kappa_2+\ii \eta \in \C \setminus \R.$$ 
In particular, we have $z_1, z_2 \in S_{\mathrm{edge}}$ given in (\ref{S_edge}), in view of (\ref{z_set}).

Using the cumulant expansion formulas in Lemma \ref{cumulant} as in (\ref{step0}), we obtain from (\ref{diff_fx}) that
\begin{align}\label{derivative_F}
\frac{\dd }{\dd t}\E [F(\X(t))]=&\sum_{\alpha,b}   \sum_{p+1=3}^{4} \frac{1}{p!} \frac{s^{(p+1)}_{\alpha b}(t)}{N^{\frac{p+1}{2}}}\E \Big[\frac{ \partial^{p} F'(\X) \Delta \Im G_{\alpha b}}{\partial h^{p}_{\alpha b}} \Big]+O_{\prec}\big(\frac{1}{\sqrt{N}}\big),
\end{align}
where $\{s^{(p+1)}_{\alpha b}(t)\}$ are the $(p+1)$-th cumulants of the normalized entries $\sqrt{N}h_{\alpha b}(t)$ given in (\ref{t_cumu}).

Recall the differentiation rules in (\ref{dH}), and note that we further have
 \begin{align}\label{int_1}
\frac{ \partial F'(\X) }{ \partial h_{\alpha b} }=-2 F''(\X) \sum_{v=1}^{N} \Im  \Big( \intkappa  G_{b v} G_{v \alpha}(E_++x+\ii \eta) \dd x \Big)=-2 F''(\X) \Delta \Im G_{ b \alpha}\,,
\end{align}
which follows from the block structure of $G$ in (\ref{blockG}), similarly as in (\ref{diff_fx}).

To estimate the resulting terms on the right side of (\ref{derivative_F}), we introduce the analogous form of (\ref{form}) in (\ref{form_F}) below, which are functions of $t \in \R^+$ and $z_1,z_2 \in S_{\mathrm{edge}}$. We remark that the variables $z_1,z_2$ are from the definition of $\Dim$ in (\ref{dim}). Instead of the centered factors $\ud G-\wt m$ and $\ud \G +\frac{1}{1+\wt m}$ included in (\ref{form}), we replace them with $\ud G+\frac{1}{1+\sqrt{\varrho}} $ and $\ud \G+\frac{1+\sqrt{\varrho}}{\sqrt{\varrho}}$, considering that $z_1,z_2 \in S_{\mathrm{edge}}$. To be precise, from (\ref{mplawgamma-1}) and Lemma \ref{sample_m}, it is not hard to check that, for any $z \in S_{\mathrm{edge}}$,
\begin{align}\label{edge_number}
\Big|\wt m(z)+\frac{1}{1+\sqrt{\varrho}}\Big| =O(N^{-\frac{1}{3}+\frac{\epsilon}{2}}),  \qquad \Big|\frac{1}{1+\wt m(z)}-\frac{1+\sqrt{\varrho}}{\sqrt{\varrho}}\Big| =O(N^{-\frac{1}{3}+\frac{\epsilon}{2}}),
\end{align}
and thus from the local law in (\ref{G}), for any $t \in \R^+$ and $z \in S_{\mathrm{edge}}$,
\begin{align}\label{GG}
\Big| \ud G(t,z)+\frac{1}{1+\sqrt{\varrho}} \Big| \prec \Psi, \qquad \Big| \ud \G(t,z)+\frac{1+\sqrt{\varrho}}{\sqrt{\varrho}} \Big| \prec \Psi.
\end{align}
Then, for fixed $i_0 \in \N$, $n_i^{o},n_i^{g},n_i^{\mathfrak{g}} \in \N$ for any $1\leq i\leq i_0$ and a free summation index set $\mathcal{I}:=\{\mathfrak{v}_j\}_{j=1}^{m},~m \in \N$ (which may include $\alpha$ and $b$ in (\ref{derivative_F})), the analogous form of (\ref{form}) for general functions $F$ is given by
\begin{align}\label{form_F}
 &\frac{1}{N^{\# \mathcal{I}}} \sum_{\mathcal{I}} c^{(0)}_{\mathcal{I}}(t) F^{(i_0)}(\X) \prod_{i=1}^{i_0} \Dim \Big( c^{(i)}_{\mathcal{I}}(t) \big( \prod_{l=1}^{n^o_i} G_{x^{(i)}_{l} y^{(i)}_l}\big) \big(\ud{G}+\frac{1}{1+\sqrt{\varrho}}\big)^{n_i^g} \big( \ud{\G}+\frac{1+\sqrt{\varrho}}{\sqrt{\varrho}}\big)^{n_i^{\mathfrak{g}}} \Big),
\end{align}
where the coefficients $\{c^{(i)}_{\mathcal{I}}(t) \in \R\}_{i=0}^{i_0}$ are uniformly bounded deterministic functions of $t \in \R^{+}$, $F^{(i_0)}$ is the $i_0$-th derivative of the smooth function $F$, $\Dim: \R^+ \times (\C \setminus \R)^2 \rightarrow \C$ is defined in (\ref{dim}), and each row index $x^{(i)}_{l}$ and column index $y^{(i)}_{l}$ of the Green function entries represents some element in $\mathcal{I}$.

As the analogue of (\ref{n_number}), we denote the total number of Green function entries (including the centered diagonal Green function factor $\ud G+\frac{1}{1+\sqrt{\varrho}}$ and $\ud \G+\frac{1+\sqrt{\varrho}}{\sqrt{\varrho}}$) in the products of the form in (\ref{form_F}) by 
\begin{align}\label{n_number_F}
n:=\sum_{i=1}^{i_0} (n_i^o+n_i^{g}+n_i^{\mathfrak{g}}),
\end{align}
and denote the number of off-diagonal Green function entries in the products (\cf (\ref{d_o})) as
 \begin{align}\label{d_o_F}
 d^o:=\sum_{i=1}^{i_0} \# \big\{ 1 \leq l \leq n^{o}_i : x^{(i)}_{l} \neq y^{(i)}_{l}\big\}.
 \end{align}
Analogous to (\ref{degree_0}), we further define the degree of such a term, denoted by $d$, to be
 \begin{align}\label{degree_F}
 d:=d^o+\sum_{i=1}^{i_0} n_i^g+\sum_{i=1}^{i_0}n_i^{\mathfrak{g}}.
 \end{align}

We use $\wt {\mathcal{Q}}_d \equiv \wt {\mathcal{Q}}_d(t,z_1,z_2)$ to denote the collection of the averaged products of the Green function entries of the form in (\ref{form_F}) of degree $d$. From the definition of $\Dim$ in (\ref{dim}), the local laws in (\ref{G}) and (\ref{GG}), together with the assumption that $F$ has bounded derivatives, we have, for any $\wt Q_d \equiv \wt Q_d(t,z_1,z_2)  \in \mathcal{\wt Q}_d$, 
$$|\wt Q_d(t,z_1,z_2)| \prec \Psi^d+N^{-1}\,,$$
uniformly in $t\in \R^+$, and $z_1,z_2 \in S_{\mathrm{edge}}$. In the following, we often omit the parameters $t,z_1,z_2$ for notational simplicity.

Definition \ref{unmatch_def} for unmatched terms of the form in (\ref{form}) can be adapted naturally to the general form in~(\ref{form_F}) by setting the number of appearances of any summation index $\mathfrak{v}_j \in \mathcal{I}$ as the row or column index of a Green function entry in the products to be (\cf (\ref{nu_number}))
\begin{align}\label{nu_number_F}
\nn(\mathfrak{v}_j):=&\sum_{i=1}^{i_0}\#\{ 1 \leq l \leq n^{o}_i: x^{(i)}_l=\mathfrak{v}_j \}+\sum_{i=1}^{i_0}\#\{ 1 \leq l \leq n^{o}_i: y^{(i)}_l=\mathfrak{v}_j \}.
\end{align}
We also denote by $\wt{\mathcal{Q}}_d^{o} \subset \wt{\mathcal{Q}}_d$ the collection of unmatched terms in the form (\ref{form_F}) of degree $d$.

Using the differential rules in (\ref{dH}) and (\ref{int_1}), the third order terms for $p+1=3$ on the right side of (\ref{derivative_F}) are unmatched terms of the form in (\ref{form_F}) up to a factor $\sqrt{N}$, with $\mathcal{I}=\{\alpha,b\}$, $\nn(\alpha)=\nn(b)=3$ and $n=3$. The arguments given in the proof of Proposition \ref{unmatch_lemma} still apply to the general form in (\ref{form_F}), using that $\{h_{ij}\}$ commute with $\Dim$ defined in (\ref{dim}), the differentiation rules in (\ref{dH}) and (\ref{int_1}), and the assumption that the function $F$ has bounded derivatives. Therefore, as in Proposition \ref{unmatch_lemma}, given any unmatched term $\wt Q^o_d  \in \wt{\mathcal{Q}}^o_d$ in~(\ref{form_F}) with fixed $n \in \N$, for any fixed integer $D\ge d$, we have
\begin{align}\label{wt_Q_d}
|\E[\wt Q^o_d(t,z_1,z_2)]|=O_{\prec}\big(N^{-1}+\Psi^D\big)\,,
\end{align}
uniformly in $t\in \R^+$ and $z_1,z_2 \in S_{\mathrm{edge}}$. Hence, the third order terms on the right side of (\ref{derivative_F}) can be bounded by $O_{\prec}\big(\sqrt{N}(\frac{1}{N}+ \Psi^D)\big)$ and 
\begin{align}\label{derivative_F_1}
\frac{\dd }{\dd t}\E [F(\X(t))]=&\frac{1}{6 N^2} \sum_{\alpha,b} s^{(4)}_{\alpha b}(t) \E \Big[\frac{ \partial^{3} F'(\X) \Delta \Im G_{\alpha b}}{\partial h^{3}_{\alpha b}} \Big]+O_{\prec}\big(\frac{1}{\sqrt{N}}+\sqrt{N} \Psi^D\big).
\end{align}

It then suffices to estimate the remaining fourth order terms. From (\ref{dH}) and (\ref{int_1}), the resulting terms on the right side above are matched terms of the form in (\ref{form_F}), with $\mathcal{I}=\{\alpha,b\}$, $\nn(\alpha)=\nn(b)=4$ and $n=4$. The definitions for type-$\alpha b$, type-$b$ and type-0 terms in Definition \ref{def_type_AB} can be extended naturally to the general form in (\ref{form_F}). We consider the following form with the two indices $\alpha$ and~$b$ singled out,
\begin{align}\label{form_F_ab}
\frac{1}{N^{2+\# \mathcal{I}}} \sum_{\alpha,b,\mathcal{I}} c^{(0)}_{\alpha,b,\mathcal{I}}(t)  F^{(i_0)}(\X) \prod_{i=1}^{i_0} \Dim \Big( c^{(i)}_{\alpha,b,\mathcal{I}}(t) \big( \prod_{l=1}^{n^o_i} G_{x^{(i)}_{l} y^{(i)}_l}\big) \big(\ud{G}+\frac{1}{1+\sqrt{\varrho}}\big)^{n_i^g} \big( \ud{\G}+\frac{1+\sqrt{\varrho}}{\sqrt{\varrho}}\big)^{n_i^{\mathfrak{g}}} \Big),
\end{align}
where $\{c^{(i)}_{\alpha,b,\mathcal{I}}(t) \in \R \}_{i=0}^{i_0}$ are uniformly bounded deterministic functions of $t\in \R^+$, each row index $x^{(i)}_{l}$ and column index $y^{(i)}_{l}$ of the Green function entries represents either the indices $\alpha$, $b$, or some element in the free summation index set~$\mathcal{I}:=\{\mathfrak{v}_j\}_{j=1}^{m}$. Recall from (\ref{nu_number_F}) the definition of $\nn(\mathfrak{v}_j)$ for any $\mathfrak{v}_j \in \mathcal{I}$. We define $\nn(\alpha)$ and $\nn(b)$ similarly for the special indices $\alpha$ and $b$.

\begin{definition}

A term of degree $d$ in (\ref{form_F_ab}) is referred to as a {\it type-$\alpha b$ term} if
\begin{enumerate}
\item $\nn(\alpha)=\nn(b)=4$, and $\nn(\mathfrak{v}_j)=2$ for any $\mathfrak{v}_j \in \mathcal{I}$;
\item $x^{(i)}_{l}= y^{(i)}_{l}~(1 \leq i\leq i_0, ~1\leq l\leq n^o_i)$, then $x^{(i)}_{l}=y^{(i)}_{l}=\alpha$ or $x^{(i)}_{l}=y^{(i)}_{l}=b$.
\end{enumerate}
Such a term is denoted by $T_d^{\alpha b}$, and the collection of all such type-$\alpha b$ terms is denoted by $\mathcal{T}^{\alpha b}_d$.

Similarly, a term of degree $d$ in (\ref{form_F_ab}) is referred to as a {\it type-$b$ term} if
\begin{enumerate}[label=(\roman*)]
\item $\nn(\alpha)=0$, $\nn(b)=4$, and $\nn(\mathfrak{v}_j)=2$ for all $\mathfrak{v}_j \in \mathcal{I}$;
\item $x^{(i)}_{l} \neq y^{(i)}_{l}~(1 \leq i\leq i_0, ~1\leq l\leq n^o_i)$ unless $x^{(i)}_{l}=y^{(i)}_{l}=b$.
\end{enumerate}
Such a term is denoted by $T_d^{b}$, and the collection of all such type-$b$ terms is denoted by $\mathcal{T}^{b}_d$.

Finally, a term of degree $d$ in (\ref{form_F_ab}) is referred to as a {\it type-$0$ term} if 
\begin{enumerate}[label=\alph*)]
\item $\nn(\alpha)=\nn(b)=0$, $\nn(\mathfrak{v}_j)=2$ for all $\mathfrak{v}_j \in \mathcal{I}$;
\item 
$x^{(i)}_{l} \neq y^{(i)}_{l}$ for any $1 \leq i\leq i_0, ~1\leq l\leq n^o_i$.
\end{enumerate}
Such a term is denoted by $T_d$, and the collection of all such type-$0$ terms is denoted by $\mathcal{T}_d$.
\end{definition}

Returning to the right side of (\ref{derivative_F_1}), the resulting terms by (\ref{dH}) and (\ref{int_1}) are finitely many type-$\alpha b$ terms of the form in (\ref{form_F_ab}) of degrees $d\geq 0$ with $\nn(\alpha)=\nn(b)=4$ and $n = 4$. The proof of Proposition~\ref{lemma_expand_type} can be extended to the general form in (\ref{form_F}) with modifications. We will present below the modified expansion mechanism to eliminate one pair of the index $\alpha$ in the general form (\ref{form_F}), as discussed in Case~$1\alpha$ in Subsection \ref{sec:expand} for the special form  (\ref{form}), and the remaining cases can be modified similarly.

For example, a term of degree zero on the right side of (\ref{derivative_F_1}) is given by
$$T^{\alpha b}_0:=\frac{1}{N^2} \sum_{\alpha, b} s_{\alpha b}^{(4)}(t) F'(\X) \Dim \big( (G_{\alpha \alpha})^2 (G_{bb})^2 \big) .$$
We then eliminate one pair of the index $\alpha$ by expanding one diagonal Green function entry $G_{\alpha \alpha}$ using the identity (\ref{id_3}) and the cumulant expansions in Lemma \ref{cumulant}. Using the first identity in (\ref{id_3}) on $G_{\alpha \alpha}$ and the definition of $\Dim$ in (\ref{dim}), we have
\begin{align}\label{F_temp_1}
\E[T^{\alpha b}_0]=&\frac{1}{N^2} \sum_{\alpha, b,k} s_{\alpha b}^{(4)}(t) \E \Big[ H_{\alpha k} F'(\X) \Dim \big( G_{k \alpha }G_{\alpha \alpha} (G_{bb})^2 \big) \Big]-\frac{1}{N^2} \sum_{\alpha, b} s_{\alpha b}^{(4)}(t)  \E \Big[ F'(\X) \Dim \big( G_{\alpha \alpha} (G_{bb})^2 \big) \Big]\nonumber\\
=&\frac{1}{N^3} \sum_{\alpha, b,k} s_{\alpha b}^{(4)}(t) \E \Big[ \frac{\partial F'(\X) \Dim \big( G_{k \alpha }G_{\alpha \alpha} (G_{bb})^2 \big) }{\partial h_{\alpha k}}\Big]-\frac{1}{N^2} \sum_{\alpha, b} s_{\alpha b}^{(4)}(t)  \E \Big[ F'(\X) \Dim \big( G_{\alpha \alpha} (G_{bb})^2 \big) \Big]\nonumber\\
&+\frac{1}{2 \sqrt{N} N^3} \sum_{\alpha, b,k} s_{\alpha b}^{(4)}(t) s_{\alpha k}^{(3)}(t)    \E \Big[ \frac{\partial^2 F'(\X) \Dim \big( G_{k \alpha }G_{\alpha \alpha} (G_{bb})^2 \big) }{\partial h^2_{\alpha k}}\Big]+O_{\prec}(N^{-1}),
\end{align}
where the error $O_{\prec}(N^{-1})$ stems from truncating the cumulant expansions at the third order. Using the differentiation rules in (\ref{dH}) and (\ref{int_1}), the third order terms with $\{s_{\alpha k}^{(3)}(t) \}$ above are also of the form in (\ref{form_F_ab}) with $\nn(k)=3$ up to a factor $\frac{1}{\sqrt{N}}$, and thus are unmatched terms which can be bounded by $O_{\prec}(N^{-3/2}+N^{-1/2}\Psi^D)$. 

We next look at the second order terms in the cumulant expansions, \ie the first group of terms on the right side of (\ref{F_temp_1}). From the differentiation rules in (\ref{dH}), (\ref{int_1}) and that $\frac{\partial}{\partial h_{\alpha k}}$ commutes with $\Dim$, these terms are also type $\alpha b$ terms of the form in (\ref{form_F_ab}), with $\mathcal{I}=\{\alpha, b, k\}$, $\nn(k)=2$, and $\nn(\alpha)=\nn(b)=4$. Since the index $k$ is fresh, the degrees of these type-$\alpha b$ terms are increased to at least two, except the leading term of degree zero which comes from letting $\frac{\partial}{\partial h_{\alpha k}}$ act on $G_{k\alpha}$. The collection of these terms of degrees at least two is denoted as
\begin{align}\label{F_second}
\sum_{d \geq 2} \E [T^{\alpha b}_d].
\end{align}
The leading term of degree zero corresponding to $\frac{\partial}{\partial h_{\alpha k}}$ acting on $G_{k\alpha}$ is given by
\begin{align}\label{F_temp_2}
-\frac{1}{N^3} \sum_{\alpha, b,k} s_{\alpha b}^{(4)}& \E \Big[ F'(\X) \Dim \big( G_{k k}(G_{\alpha \alpha})^2 (G_{bb})^2 \big) \Big]=\frac{1}{1+\sqrt{\varrho}}\frac{1}{N^2} \sum_{\alpha, b} s_{\alpha b}^{(4)} \E \Big[ F'(\X) \Dim \big( (G_{\alpha \alpha})^2 (G_{bb})^2 \big) \Big]\nonumber\\
 &-\frac{1}{N^2} \sum_{\alpha, b} s_{\alpha b}^{(4)} \E \Big[ F'(\X) \Dim \Big((G_{\alpha \alpha})^2 (G_{bb})^2\big( \ud{G} +\frac{1}{1+\sqrt{\varrho}}\big) \Big) \Big].
\end{align}
 The first term on the right side of (\ref{F_temp_2}) can be absorbed into the left side of (\ref{F_temp_1}) by considering $\frac{\sqrt{\varrho}}{1+\sqrt{\varrho}} \E[T^{\alpha b}_0]$. The second term gains an additional centered factor $\ud G+\frac{1}{1+\sqrt{\varrho}}$ and thus its degree is increased to one.

Therefore, after moving the leading term in (\ref{F_temp_2}) to the left side of (\ref{F_temp_1}) and dividing both sides by the deterministic real number $\frac{\sqrt{\varrho}}{1+\sqrt{\varrho}} \sim 1$ (see (\ref{condition})), together with the shorthand notation in  (\ref{F_second}), we obtain that
\begin{align}\label{temp_4_F}
\E[ T^{\alpha b}_0]=&-\frac{1+\sqrt{\varrho}}{\sqrt{\varrho}}\frac{1}{N^2} \sum_{\alpha, b} s_{\alpha b}^{(4)}(t)  \E \Big[ F'(\X) \Dim \big( G_{\alpha \alpha} (G_{bb})^2 \big) \Big]\nonumber\\
&-\frac{1+\sqrt{\varrho}}{\sqrt{\varrho}}\frac{1}{N^2} \sum_{\alpha, b} s_{\alpha b}^{(4)}(t) \E \Big[ F'(\X) \Dim \Big((G_{\alpha \alpha})^2 (G_{bb})^2( \ud{G} +\frac{1}{1+\sqrt{\varrho}}) \Big) \Big]\nonumber\\
&+\frac{1+\sqrt{\varrho}}{\sqrt{\varrho}}\sum_{d \geq 2} \E[T^{\alpha b}_d]+O_{\prec}(N^{-1}),
\end{align}
where the first term is of degree zero obtained by replacing one factor $G_{\alpha \alpha}$ in the original term $ T^{\alpha b}_0$ with the deterministic real number $-\frac{1+\sqrt{\varrho}}{\sqrt{\varrho}}$, the second and third group of terms are also type-$\alpha b$ terms of the form in (\ref{form_F_ab}) whose degrees are increased to at least one. In this way, we have eliminated one pair of the index $\alpha$ for the leading term and obtained the analogue of (\ref{case_1a}) for the general form in (\ref{form_F}).

Using the arguments in the proof of Proposition \ref{lemma_expand_type} for the special form in (\ref{form}) and the expansion mechanism above to extend to the general form in (\ref{form_F}), we obtain the analogue of Proposition \ref{lemma_expand_type}. Hence, the fourth order terms on the right side of (\ref{derivative_F_1}) can be expanded into finitely many type-0 terms of the form in (\ref{form_F_ab}) of degrees $d \geq 0$. For any fixed $D \geq 1$, we write for short that
\begin{align}\label{derivative_F_10}
\frac{\dd }{\dd t}\E [F(\X(t))]=&\sum_{ \substack{T_{d} \in \mathcal{T}_{d} \\ 0 \leq d \leq D-1}} \E[T_d]+O_{\prec}\big(N^{-\frac{1}{2}}+\sqrt{N}\Psi^D\big),
\end{align}
where the summation above contains at most $(CD)^{cD}$ type-0 terms of the form in (\ref{form_F_ab}) and the number of the Green function entries in each term is at most $CD$ for some numerical constants $C,c>0$.

Hence it suffices to estimate the size of a type-0 term of the form in (\ref{form_F_ab}) using the estimate in (\ref{img_sample}). It is straightforward to check from (\ref{d_o_F}) that, given an arbitrary type-0 term in (\ref{form_F_ab}), denoted by $T_d$, we have $d^o=\sum_{i=1}^{i_0} n^o_i$ with $d^o=0$ or $d^o \geq 2$. If $d^o=0$, then $T_d$ reduces to
$$T_d=\frac{1}{N^{2+\# \mathcal{I}}} \sum_{\alpha,b,\mathcal{I}} c'_{\alpha,b,\mathcal{I}} F^{(i_0)}(\X) \prod_{i=1}^{i_0}  \Dim \Big( \big(\ud{G}+\frac{1}{1+\sqrt{\varrho}}\big)^{n_i^g} \big( \ud{\G}+\frac{1+\sqrt{\varrho}}{\sqrt{\varrho}} \big)^{n_i^{\mathfrak{g}}} \Big),$$
where $c'_{\alpha,b,\mathcal{I}}= \prod_{i=0}^{i_0} c^{(i)}_{\alpha,b,\mathcal{I}}(t)$ from the definition of $\Dim$ in (\ref{dim}) and the assumption that the coefficients $c^{(i)}_{\alpha,b,\mathcal{I}}(t)$ are real-valued functions of $t$. From the definition of $\ud \G$ in (\ref{ggu}), the relation in (\ref{gamma_N}) and the estimate in (\ref{img_sample}), we have 
\begin{align}
\E[\Im \ud{G}(t,z)]=O(N^{-1/3}); \qquad \E[\Im \ud{\G}(t,z)]=O(N^{-1/3}),
\end{align}
for any $t\in \R^+$ and $z \in S_{\mathrm{edge}}$. In combination with the local law in (\ref{GG}), the fact that $\{c'_{\alpha,b,\mathcal{I}}\}$ and $F^{(i_0)}$ are uniformly bounded and statement (3) in Lemma \ref{dominant_prop}, we have 
\begin{align}\label{d_11}
\big|\E [T_d(t,z_1,z_2)]\big|=O_{\prec}(N^{-\frac{1}{3}}).
\end{align}

Else, if $d^o \geq 2$, similarly to the estimates in (\ref{rain_1}), (\ref{rain_2}) and (\ref{ward_id}), using the definition of $\Dim$ in~(\ref{dim}), the local laws in (\ref{G}) and (\ref{GG}) and the fact that the coefficients $\{c^{(i)}_{\alpha,b,\mathcal{I}}(t)\}$ and the derivative $F^{(i_0)}$ are uniformly bounded, we have
\begin{align}\label{d_22}
\big| \E[T_d(t,z_1,z_2)]\big| \prec \frac{ \E[\Im m_N(t,z_1)]}{N \Im z_1}+\frac{\E[\Im m_N(t,z_2)]}{N \Im z_2}  +\frac{1}{N} =O_{\prec}( N^{-\frac{1}{3}-\epsilon}),
\end{align}
for any $t\in \R^+$ and $z_1, z_2 \in S_{\mathrm{edge}}$, where the last step follows from the estimate in (\ref{img_sample}).

Therefore, integrating both sides of (\ref{derivative_F_10}) up to time $T=8\log N$ and combining with (\ref{d_22}) and~(\ref{d_11}), we have
$$\big| \E [F(\X(T))]-\E [F(\X(0))]\big|=O_{\prec}(  N^{-\frac{1}{3}}\log N).$$
Combining with (\ref{approxxxx}) and that $F$ has bounded derivatives, we complete the proof of Theorem \ref{green_comparison}.
\end{proof}


\begin{thebibliography}{00}



	\bibitem{odd}  Adler, M., Forrester, P. J., Nagao, T. V., Van Moerbeke, P.: \emph{Classical skew orthogonal polynomials and random matrices}, Journal of Statistical Physics {\bf 99}(1), 141-170 (2000).
	
	\bibitem{erdos_gram} Alt, J., Erd\H{o}s, L., Kr\"uger, T.: \emph{Local law for random Gram matrices}, Electron. J. Probab.  {\bf 22}, 1-41 (2017).
	
	
	
          \bibitem{AEKS} Alt, J., Erd{\H o}s, L, Kr\"uger, T,  Schr\"oder, D.: \emph{Correlated random matrices: band rigidity and edge universality}, Ann.\ Probab.\ \textbf{48(2)}, 963-1001  (2020).		 


		\bibitem{AGZ}  Anderson, G.,  Guionnet, A., Zeitouni, O.: \emph{An introduction to random matrices}, Cambridge studies in advanced mathematics {\bf 118}, Cambridge University Press, Cambridge (2010).
		
			
        \bibitem{classical1}	Anderson, T.W.: \emph{An Introduction to Multivariate Statistical Analysis}(2nd ed.), Wiley, New York (1984).



		\bibitem{kendall tau} Bao, Z.G.: \emph{Tracy--Widom limit for Kendall's tau}, Ann.\ Stat.\ {\bf 47.6}, 3504-3532  (2019). 

		
		 \bibitem{bao} Bao, Z.G., Pan, G.M.,  Zhou, W.: \emph{Universality for the largest eigenvalue of sample covariance matrices with general population}, Ann.\ Stat.\ {\bf 43}(1), 382-421 (2015).
		 
		  \bibitem{bao2} Bao, Z.G., Hu, J., Pan, G.M.,  Zhou, W.: \emph{Canonical correlation coefficients of high-dimensional Gaussian vectors: Finite rank case}, Ann.\ Stat.\ {\bf 47}(1), 612-640 (2019).
		 
		 		 
		 
		 
		 
		 		\bibitem{Ben+Peche} Ben Arous, G., P\'ech\'e, S.: \emph{Universality of local eigenvalue statistics for some sample covariance matrices}, Comm. Pure Appl. Math. {\bf 58}, 1-42 (2005).
		 		
		 			\bibitem{signal1}	  Bianchi, P., Debbah, M., Maida, M. and Najim, J.: \emph{Performance of Statistical Tests for Single-Source Detection Using Random Matrix Theory,}  IEEE Transactions on Information Theory {\bf 57}(4), 2400-2419 (2011).
		
		        \bibitem{Alex+Erdos+Knowles+Yau+Yin}  Bloemendal, A., Erd\H{o}s, L., Knowles A., Yau, H.T. and Yin, J.: \emph{Isotropic local laws for sample covariance and generalized Wigner matrices}, Electron. J. Probab. {\bf 19}(33), 53pp (2014).	 

		
		\bibitem{Bourgade extreme} Bourgade, P.: \emph{Extreme gaps between eigenvalues of Wigner matrices}, arXiv:1812.10376 (2018).
		
			        \bibitem{BEY edge universality} Bourgade, P., Erd{\H o}s, L., Yau, H.-T.: \emph{Edge universality of beta ensembles}, Commun.\ Math.\ Phys.\ \textbf{332.1} 261-353  (2014).

			       \bibitem{fixed generalized wigner} Bourgade, P., Erd\H{o}s, L., Yau, H.-T., Yin, J.: \emph{Fixed energy universality for generalized Wigner matrices}, Comm.\ Pure Appl.\ Math.\ {\bf 69(10)}, 1815-1881 (2016).
		    			
		\bibitem{BoutetdeMonvel} de Monvel, A. Boutet, Khorunzhy, A.: \emph{Asymptotic distribution of smoothed eigenvalue density. II. Wigner random matrices}, Random Operators and Stochastic Equations {\bf 7}(2), 149-168 (1999).
				
		\bibitem{deift} Deift, P., Gioev, D.: \emph{ Random matrix theory: invariant ensembles and universality}, Courant Lecture Notes in Mathematics. Vol. 18. American Mathematical Soc., 2009.
		
		 \bibitem{convergence_kernel} Deift, P., Gioev, D., Kriecherbauer, T., Vanlessen, M.: \emph{Universality for Orthogonal and Symplectic Laguerre-Type Ensembles}, Journal of Statistical Physics {\bf 129}(5-6) (2007).
		 
		 
		 
		 
		 \bibitem{Ding+Yang} Ding, X.,  Yang, F.: \emph{A necessary and sufficient condition for edge universality at the largest singular values of covariance matrices}, Ann.\ Appl.\ Probab.\ {\bf 28}(3), 1679-1738 (2018).
		 
		\bibitem{gram}  Ding, X., and Yang, F.: \emph{Tracy--Widom distribution for the edge eigenvalues of Gram type random matrices}, arXiv:2008.04166 (2020).
		 
		\bibitem{Edelman}  Edelman, A.: \emph{The distribution and moments of the smallest eigenvalue of a random matrix of Wishart type}, Linear algebra and its applications {\bf 159}, 55-80 (1991).
		 
		 
		 		\bibitem{complex_convergence_rate} El Karoui, N.: \emph{A rate of convergence result for the largest eigenvalue of complex white Wishart matrices}, Ann.\ Probab.\ {\bf 34}(6), 2077-2117 (2006).
				
				
				\bibitem{nonnull1} El Karoui, N.: \emph{Tracy--Widom limit for the largest eigenvalue of a large class of complex sample covariance matrices}, Ann.\ Probab.\ {\bf 35}(2), 663 - 714 (2007).


							

	
		 
		 \bibitem{Erdos+Knowles+Yau}  Erd\H{o}s, L., Knowles, A. and  Yau, H.-T.: \emph{Averaging fluctuations in resolvents of random band matrices}, Ann. Henri Poincar\'e {\bf 14}, 1837-1926 (2013).		 
					    		   
			 \bibitem{sparse0}  Erd\H{o}s, L., Knowles, A. and  Yau, H.-T. and Yin, J.: \emph{Spectral statistics of Erd\H{o}s-R\'enyi Graphs II: Eigenvalue spacing and the extreme eigenvalues}, Communications in Mathematical Physics {\bf 314}(3), 587-640 (2012).			    		   
					    		    		
					    		  
			\bibitem{isotropic2}  Erd\H{o}s, L., Kr\"uger, T. and Schr\"oder, D.: \emph{Random Matrices with Slow Correlation Decay}, Forum of Mathematics Sigma {\bf 7}(8) (2019).
								        		         		
		 \bibitem{book}  Erd\H{o}s, L. and Yau, H.-T.: \emph{ A dynamical approach to random matrix theory. Courant Lecture Notes in Mathematics} {\bf 28}. Providence: American Mathematical Society (2017).
		
 		\bibitem{rigidity} Erd\H{o}s, L., Yau, H.-T. and Yin, J.: \emph{Rigidity of eigenvalues of generalized Wigner matrices}, Adv. Math. {\bf 229}(3), 1435-1515 (2012).

		
		 \bibitem{manova}  Fan, Z., and Johnstone, I. M.:\emph{Tracy--Widom at each edge of real covariance and MANOVA estimators}, arXiv:1707.02352 (2017).
		 
		\bibitem{sodin} Feldheim, O. N., Sodin, S.: \emph{A universality result for the smallest eigenvalues of certain sample covariance matrices}, Geometric And Functional Analysis {\bf 20}(1), 88-123 (2010).
				
		\bibitem{peter} Forrester P. J.: \emph{The spectrum edge of random matrix ensembles}, Nuclear Physics B {\bf 402}, 709-728 (1993).
		
		\bibitem{German} Geman, S.: \emph{A limit theorem for the norm of random matrices}, Ann.\ Probab.\, 252-261 (1980).
		

                \bibitem{nonnull3} Hachem, W., Hardy, A., Najim, J.: \emph{Large complex correlated Wishart matrices: Fluctuations and asymptotic independence at the edges}, Ann.\ Probab.\, {\bf 44}(3), 2264-2348 (2016).
                
                
             \bibitem{fisher}   Han, X., Pan, G., Zhang, B.: \emph{The Tracy--Widom law for the largest eigenvalue of F type matrices}, Ann.\ Stat.\ {\bf 44}(4), 1564-1592 (2016).

\bibitem{manova2}  Han, X., Pan, G., and Yang, Q.: \emph{A unified matrix model including both CCA and F matrices in multivariate analysis: The largest eigenvalue and its applications}, Bernoulli {\bf 24}(4B), 3447-3468 (2018).




          	\bibitem{moment} He, Y.,  Knowles, A.: \emph{Mesoscopic eigenvalue statistics of Wigner matrices}, Ann. Appl. Probab. {\bf 27}(3), 1510-1550 (2017).
		
		 \bibitem{He+Knowles} He, Y., Knowles, A.: \emph{Fluctuations of extreme eigenvalues of sparse Erd\H{o}s-R\'{e}nyi graphs}, arXiv:2005.02254, (2020).
		 
		\bibitem{kurt_complex} Johansson, K.: \emph{Shape fluctuations and random matrices}, Commun. Math. Phys. {\bf 209}, 437-476 (2000)..

		\bibitem{kurt_det} Johansson, K.: \emph{Random matrices and determinantal processes}, arXiv math-ph/0510038 (2005).
		
	
	 	\bibitem{johnstone} Johnstone, I. M.: \emph{On the distribution of the largest eigenvalue in principal components analysis}, Ann. Stat. {\bf 29}(2), 295-327 (2001).
	 
	 		\bibitem{johnstone_application} Johnstone, I. M.:  \emph{High dimensional statistical inference and random matrices}, arXiv math/0611589 (2006).	
	 	
	 	\bibitem{johnstone2} Johnstone, I. M.: \emph{Multivariate analysis and Jacobi ensembles: largest eigenvalue, Tracy–Widom limits and rates of convergence}, Ann. Stat. {\bf 36}, 2638-2716 (2008). 
					

					\bibitem{johnstone+paul} Johnstone, I. M., Paul, D.: \emph{PCA in High Dimensions: An Orientation}, Proc IEEE Inst Electr Electron Eng.{\bf 106(8)}, 1277-1292 (2018).

		
      \bibitem{pca}	Jolliffe, I.: \emph{Principal Component Analysis}(2nd ed.), 
      Springer (2002)
		

			
		\bibitem{KKP} Khorunzhy, A., Khoruzhenko, B., Pastur, L.: \emph{Asymptotic Properties of Large Random Matrices with Independent Entries},  Journal of Mathematical Physics \textbf{37(10)}, 5033-5060 (1996).
		
		
		 \bibitem{isotropic}  Knowles, A., Yin, J.: \emph{Anisotropic local laws for random matrices}, Probab. Theory Relat. Fields {\bf 169}(1-2), 257-352 (2017).
		

 	         \bibitem{LandonYau_edge} Landon, B, Yau, H-T.: \emph{Edge statistics of Dyson Brownian motion},  arXiv 1712.03881, 2017.
			  
			  \bibitem{fixed dbm} Landon, B., Sosoe, P., Yau, H.-T.: \emph{Fixed energy universality of Dyson Brownian motion}, Adv.\ Math.\ {\bf 346}, 1137-1332 (2019).
			  		
	         \bibitem{LS14b} Lee, J.\ O., Schnelli, K.: \emph{Tracy--Widom Distribution for the Largest Eigenvalue of Real Sample Covariance Matrices with General Population}, Ann.\ Appl.\ Probab.\ \textbf{26(6)}, 3786-3839 (2016).
	         
		 \bibitem{sparse} Lee, J.\ O., Schnelli, K.: \emph{Local law and Tracy--Widom limit for sparse random matrices}, Probab. Theory Relat. Fields {\bf 171}(1), 543-616 (2018).	
		 
		 
		 	\bibitem{LY} Lee, J.\ O., Yin, J.: \emph{A Necessary and Sufficient Condition for Edge Universality of Wigner Matrices},  Duke Math.\ J.\ \textbf{163(1)}, 117-173, (2014). 	 
		 

		\bibitem{LP} Lytova, A., Pastur, L.: \emph{Central Limit Theorem for Linear Eigenvalue Statistics of Random Matrices with Independent Entries}, Ann.\ Probab. \textbf{37}, 1778-1840 (2009).
				 
				 
		 \bibitem{Ma} Ma, Z.: \emph{Accuracy of the Tracy--Widom limits for the extreme eigenvalues in white Wishart matrices}, Bernoulli \textbf{18}(1) 322-359 (2012).
		 
		  \bibitem{MP} Marchenko, V. A., Pastur, L. A.: \emph{Distribution of eigenvalues for some sets of random matrices}, Matematicheskii Sbornik {\bf 114}(4), 507-536 (1967).
		 
		  \bibitem{metha} Mehta, M.: \emph{Random Matrices}, Pure and Applied Mathematics {\bf 142}, third version, Academic Press (2004).
		  
		          \bibitem{classical2} Muirhead, R.J.: \emph{Aspects of Multivariate Statistical Theory}, Wiley, New York (1982)
		  
		  
		  			\bibitem{nonnull2} Onatski, A.: \emph{The Tracy--Widom limit for the largest eigenvalues of singular complex Wishart matrices}, Ann. Appl. Probab. {\bf 18}, 470-490 (2008).

		  \bibitem{signal2} Onatski, A.: \emph{Testing hypotheses about the number of factors in large factor models}, Econometrica {\bf 77}, 1447-1479 (2009).

		  
		  \bibitem{paul+aue} Paul, D., Aue, A.: \emph{Random matrix theory in statistics: A review. Journal of Statistical Planning and Inference} {\bf 150}, 1-29 (2014).
		  
		 \bibitem{peche}  P\'ech\'e, S.: \emph{Universality results for the largest eigenvalues of some sample covariance matrix ensembles}, Probab. Theory Relat. Fields {\bf 143}(3), 481-516 (2009).
		  

		\bibitem{PY1}Pillai, N., Yin, J.: \emph{Universality of covariance matrices}, Ann.\ Appl.\ Probab.\ \textbf{24}(3), 935-1001 (2014).
		
			\bibitem{roy} Roy, S. N.: \emph{On a Heuristic Method of Test Construction and its use in Multivariate Analysis}, Ann. Math. Statist. {\bf 24}(2), 220-238 (June).
		
		\bibitem{Schnelli+Xu} Schnelli, K., Xu, Y.: \emph{Convergence rate to the Tracy--Widom laws for the largest eigenvalue of Wigner matrices}, arXiv:2102.04330 (2021).
		
		\bibitem{silverstein} Silverstein, J. W.: \emph{On the weak limit of the largest eigenvalue of a large dimensional sample covariance matrix}, Journal of Multivariate Analysis {\bf 30}(2), 307-311 (1989).

		   \bibitem{So1} Soshnikov, A.: \emph{Universality at the Edge of the Spectrum in Wigner Random Matrices}, Commun.\ Math.\ Phys.\ \textbf{207}, 697-733 (1999).
		     
		   \bibitem{sasha_det} Soshnikov, A.: \emph{Determinantal random point fields}, Russian Math. Surveys {\bf 55}(5), 923-975 (2000).
		   
		   \bibitem{sasha} Soshnikov, A.: \emph{A note on universality of the distribution of the largest eigenvalues in certain sample covariance matrices}, Journal of Statistical Physics {\bf 108}(5), 1033-1056 (2002).
		   
		     \bibitem{sasha_pf} Soshnikov, A.: \emph{ Janossy Densities II. Pfaffian Ensembles}, Journal of Statistical Physics {\bf 113}(3/4), 611-622 (2003).
		     
		      \bibitem{laguerre_poly} Szeg\H{o}, G.: \emph{Orthogonal polynomials} (Vol. 23). American Mathematical Soc. (1939).
		   
		   
		                         \bibitem{tao_vu2}   Tao, T., Vu, V.: \emph{Random matrices: Universality of local eigenvalue statistics up to the edge}, Communications in Mathematical Physics, {\bf 298}(2), 549-572 (2010).

                     \bibitem{tao_vu}   Tao, T., Vu, V.: \emph{Random matrices: The distribution of the smallest singular values}, Geometric And Functional Analysis {\bf 20}(1), 260-297 (2010).
                     
                     

		  
		   \bibitem{TW1} Tracy, C.,  Widom, H.: \emph{Level-Spacing Distributions and the Airy Kernel}, Commun. Math. Phys. {\bf 159}, 151-174 (1994).
		   
		    \bibitem{TW2} Tracy, C.,  Widom, H.: \emph{On Orthogonal and Symplectic Matrix Ensembles}, Commun. Math. Phys. {\bf 177}, 727-754 (1996).
		
				 \bibitem{wang_rate} Wang, H.: \emph{Quantitative Universality for the Largest Eigenvalue of Sample Covariance Matrices}, arXiv:1912.05473 (2019).
    
                            \bibitem{wang_ke} Wang, K.: \emph{Random covariance matrices: Universality of local statistics of eigenvalues up to the edge}, Random Matrices: Theory and Applications {\bf 1}(01), 1150005 (2012).

		    
               \bibitem{widom} Widom H.: \emph{On the relation between orthogonal, symplectic and unitary matrix ensembles}, Journal of Statistical Physics {\bf 94}(3), 347-363 (1999).
               
               \bibitem{separable} Yang, F.: \emph{Edge universality of separable covariance matrices}, Electronic Journal of Probability {\bf 24}, 1-57 (2019).
               
             \bibitem{yang2}  Yang, F.: \emph{Sample canonical correlation coefficients of high-dimensional random vectors: local law and Tracy--Widom limit}, arXiv:2002.09643 (2020).
               
               \bibitem{bai+yao}  Yao, J., Zheng, S., and Bai, Z.: \emph{ Large Sample Covariance Matrices and High-Dimensional Data Analysis} (Cambridge Series in Statistical and Probabilistic Mathematics), Cambridge: Cambridge University Press (2015).
		    
		 \bibitem{bai}    Yin, Y. Q., Bai, Z. D.,  Krishnaiah, P. R.: \emph{On the limit of the largest eigenvalue of the large dimensional sample covariance matrix}, Probab. Theory Relat. Fields {\bf 78}(4), 509-521 (1988).
		




		    
		    
				 
\end{thebibliography}
\end{document}